\providecommand{\U}[1]{\protect \rule{.1in}{.1in}}
\newtheorem{theorem}{Theorem}[section]
\theoremstyle{plain}
\newtheorem{corollary}[theorem]{Corollary}
\newtheorem{example}[theorem]{Example}
\newtheorem{lemma}[theorem]{Lemma}
\newtheorem{proposition}[theorem]{Proposition}
\newtheorem{remark}[theorem]{Remark}
\numberwithin{equation}{section}
\begin{document}
\title[Tracially Stable]{Tracial stability for $C^*$-algebras}
\author{Don Hadwin}
\address{University of New Hampshire}
\email{operatorguy@gmail.com}
\author{Tatiana Shulman}
\address{Institute of Mathematics of the Polish Academy of Sciences, Poland }
\email{tshulman@impan.pl}
\subjclass[2000]{Primary 46Lxx; Secondary 20Fxx}
\keywords{tracial ultraproduct, tracially stable, tracial norms, almost commuting matrices}

\begin{abstract}
We consider tracial stability, which requires that tuples of elements of a
C*-algebra with a trace that nearly satisfy a relation are close to tuples
that actually satisfy the relation. Here both "near" and "close" are in terms
of the associated 2-norm from the trace, e.g. the Hilbert-Schmidt norm for
matrices. Precise definitions are stated in terms of liftings from tracial
ultraproducts of C*-algebras.  We completely characterize
matricial tracial stability for nuclear C*-algebras in terms of certain
approximation properties for traces. For non-nuclear $C^{\ast}$-algebras we
find new obstructions for stability by relating it to Voiculescu's free
entropy dimension. We show that the class of C*-algebras that are
stable with respect to tracial norms on real-rank-zero C*-algebras is closed
under tensoring with commutative C*-algebras. We show that $C(X)$ is tracially stable with respect to
tracial norms on all $C^{\ast}$-algebras if and only if $X$ is approximately
path-connected.

\end{abstract}
\maketitle

\tableofcontents


\section*{Introduction}

The notion of stability is an old one. For a given equation  $p(x_{1}, \ldots,
x_{n}) = 0$ of noncommutative variables $x_{1}, \ldots, x_{n}$ one can ask if
it is "stable", meaning that for any $\epsilon>0$ there is a $\delta>0$ such
that if $\mathcal{B}$ is a $C^{*}$-algebra with $b_{1},\ldots b_{n}%
\in \mathcal{B}$ and $\left \Vert p \left(  b_{1},\ldots,b_{n}\right)
\right \Vert <\delta$, then there exist $c_{1},\ldots,c_{n}\in \mathcal{B}$ such
that $p \left(  c_{1},\ldots,c_{n}\right)  =0$ and $\left \Vert c_{k}%
-b_{k}\right \Vert <\varepsilon$ for $1\leq k\leq n$.

In other words, if some tuple is close to satisfying the equation, it is near
to something that does satisfy the equation.

"Stability under small perturbations" questions depend very much on the norm we
consider and the class of C*-algebras $\mathcal{B}$ we allow.

A folklore "stability" result is related to projections. If $x=x^{\ast}$ and
$\left \Vert x-x^{2}\right \Vert <\varepsilon<1/4$, with the norm being the
usual operator norm, then there is a projection $p\in C^{\ast}\left(
x\right)  $ with $\left \Vert p-x\right \Vert <\sqrt{\varepsilon}.$ There are
easily proved similar results for isometries $1-x^{\ast}x=0$ and unitaries
$\left(  1-x^{\ast}x\right)  ^{2}+\left(  1-xx^{\ast}\right)  ^{2}=0$ using
the polar decomposition.

For the property of being normal, $x^{\ast}x-xx^{\ast}=0,$ the famous question of
stability for finite matrices was asked by Halmos (\cite{Halmos}). He asked
whether an almost normal contractive matrix is necessarily close to a normal
contractive matrix. This is considered independently of the matrix size and
"almost" and "close" are meant with respect to the operator norm. This
question was answered positively by Lin's famous theorem \cite{Lin} (see also
\cite{FriisRordam} and \cite{KachkovskiySafarov}).

However the result does not hold when matrices are replaced by operators.  A
classical example is the sequence $\left \{  S_{n}\right \}  $ of weighted
unilateral shifts with weights,%
\[
\frac{1}{n},\frac{2}{n},\ldots,\frac{n-1}{n},1,1,\ldots.
\]
Each $S_{n}$ is a compact perturbation of the unweighted shift and Fredholm
index arguments show that the distance from $S_{n}$ to the normal operators is
exactly $1,$ but%
\[
\lim_{n\rightarrow \infty}\left \Vert S_{n}^{\ast}S_{n}-S_{n}S_{n}^{\ast
}\right \Vert =0.
\]

In other words, the relation
\[
\|x\| \le 1, \; \; xx^{*} - x^{*}x=0
\]
is stable with respect to the class of matrix algebras but is not stable with
respect to the class of all $C^{*}$-algebras. Thus "stability" questions
depend on the class of $C^{*}$-algebras you are considering.

The property of being stable with respect to the class of all $C^{*}$-algebras
and the operator norm is called \textit{weak semiprojectivity}
(\cite{SorenTerry}). An excellent exposition of weak semiprojectivity can be
found in Loring's book (\cite{Loring}).

Although almost normal operators need not be close to normal, however, using
the remarkable distance formula of Kachkovsky and Safarov in
\cite{KachkovskiySafarov}, the first author and Ye Zhang \cite{HadwinZhang}
proved that there is a constant $C$ such that, for every Hilbert-space
operator $T$, the distance from $T\oplus T\oplus \cdots$ to the normal
operators is at most $C\left \Vert T^{\ast}T-TT^{\ast}\right \Vert ^{1/2}.$

Another  famous Halmos stability question (\cite{Halmos}) asks whether two
almost commuting unitary matrices are necessarily close to two exactly
commuting unitary matrices. It was answered by Voiculescu in the negative
\cite{Voiculescu} (see \cite{ExelLoring} for a short proof). However if the
operator norm is replaced by Hilbert-Schmidt norm, then things change dramatically.
In the Hilbert-Schmidt norm almost commuting
unitary matrices turn out to be close to commuting ones, and almost commuting self-adjoint ones to be close to commuting self-adjoint ones as was shown in \cite{Hadwin-Li}
by the first author and Weihua Li.  Several quantitative results (estimating $\delta(\epsilon)$)  for almost commuting k-tuples of self-adjoint, unitary, and normal matrices with
respect to this norm have been obtained in \cite{Glebsky}, \cite{FilonovSafarov} and
\cite{FilonovKachkovskiy} . Much more
generally, it was proved in \cite{Hadwin-Li} that any polynomial equation of commuting
normal variables is stable with respect to the tracial norms on diffuse von
Neumann algebras.

However nothing is known for polynomial relations in non-commuting variables.
In this paper we initiate a study of stability of non-commutative polynomial relations,
which we translate into lifting problems for noncommutative $C^*$-algebras.

 We consider C*-algebras $\mathcal{B}$ that have a
tracial state $\rho$, and we measure "almost" and "close" in terms of the
2-(semi)norm on $\mathcal{B}$ given by
\[
\left \Vert x\right \Vert _{2}=\rho \left(  x^{\ast}x\right)  ^{1/2} .
\]

Thus we will address a "Hilbert-Schmidt" type of stability that we call
\emph{tracial stability}.

The original $\epsilon$-$\delta$-definition of norm stability can be
reformulated in terms of approximate liftings from ultraproducts $%
{\displaystyle \prod_{i\in I}^{\alpha}}
\mathcal{A}_{i}$ of C*-algebras $\mathcal{A}_{i}$. Similarly tracial stability
can be reformulated in terms of approximate liftings from tracial
ultraproducts $%
{\displaystyle \prod_{i\in I}^{\alpha}}
\left(  \mathcal{A}_{i},\rho_{i}\right)  $ of tracial C*-algebras $\left(
\mathcal{A}_{i},\rho_{i}\right)  $. These ideas are made precise in section 2.

Suppose $\mathcal{C}$ is a class of unital C*-algebras that is closed under
isomorphisms. We say that a separable unital C*-algebra $\mathcal{A}$ is
$\mathcal{C}$\emph{-tracially stable} if every unital $\ast$-homomorphism from
$\mathcal{A}$ into a tracial ultraproduct of $C^{*}$-algebras from the class
$\mathcal{C}$ is approximately liftable.

If $\mathcal{A}$ is the universal $C^{*}$-algebra of a relation $p$, this
definition is equivalent to the $\epsilon- \delta$ definition above with the
norm being a tracial norm and $C^{*}$-algebras $\mathcal{B}$ being from the
class $\mathcal{C}$.

We will be interested here in matricial tracial stability, $II_{1}$-factor
tracial stability, $W^{*}$-factor-tracial stability, RR0-tracial stability
(that is when $\mathcal{C}$ is the class of real rank zero $C^{*}$-algebras),
and $C^{*}$-tracial stability (that is when $\mathcal{C}$ is the class of all
$C^{*}$-algebras).

All previous results (\cite{Hadwin-Li}, \cite{Glebsky}, \cite{FilonovSafarov} and
\cite{FilonovKachkovskiy}) on matrices which almost commute w.r.t. the Hilbert-Schmidt norm
can be reformulated as matricial tracial stability of separable commutative $C^*$-algebras.

In fact first results related with RR0-tracial stability appeared in \cite{Phillips1} and \cite{Phillips2}, where there were proved some stability
results for projections almost commuting with matrix units,
and this was applied to deducing the tracial Rokhlin property for an
automorphism of a $C^*$-algebra from the Rokhlin property for the
corresponding automorphism of an associated von Neumann algebra.

\medskip

In section 2 of this paper we extend substantially all the previous results about matrices almost commuting with respect to the Hilbert-Schmidt norm.  

\medskip

\noindent \textbf{Theorem \ref{tensorproduct}.} \textit{Suppose  the class $\mathcal{C} \subseteq RR0$ is closed under taking direct sums and unital corners. If $\mathcal{B}$ is separable unital and
$\mathcal{C}$-tracially stable, and if $X$ is a compact metric space, then
$\mathcal{B}\otimes C\left(  X\right)  $ is $\mathcal{C}$-tracially stable. In
particular every separable unital commutative C*-algebra is $\mathcal{C}%
$-tracially stable.}

\medskip

Of course a natural obstruction for a $C^{*}$-algebra to be tracially stable
can be simply a lack of $\ast$-homomorphisms. Say, a $C^{*}$-algebra has
enough almost homomorphisms to matrix algebras to separate points, but not
enough actual $\ast$-homomorphisms to matrix algebras to separate points, then
of course it is not matricially tracially stable. However it turns out to be
not the only obstruction. We show that a certain approximation property for
traces have to hold for a C*-algebra to be matricially tracially stable. For
nuclear $C^{*}$-algebras (even tracially nuclear, see definition in section 3)
this property is also sufficient.

\medskip

\noindent \textbf{Theorem \ref{nuclear2}} \textit{Suppose $\mathcal{A}$ is a
separable tracially nuclear C*-algebra with at least one tracial state. The
following are equivalent }

\begin{enumerate}
\item \textit{$\mathcal{A}$ is matricially tracially stable }

\item \textit{for every tracial state $\tau$ on $\mathcal{A}$, there is a
positive integer $n_{0}$ and, for each $n\geq n_{0}$ there is a unital $\ast
$-homomorphism $\rho_{n}:\mathcal{A\rightarrow M}_{n}\left(  \mathbb{C}%
\right)  $ such that, for every $a\in \mathcal{A}$,%
\[
\tau \left(  a\right)  =\lim_{n\rightarrow \infty}\tau_{n}\left(  \rho
_{n}\left(  a\right)  \right) .
\]
(here $\tau_{n}$ is the usual tracial state on $\mathcal{M}_{n}(\mathbb{C})$)
}

\item \textit{$\mathcal{A}$ is W*-factor tracially stable. }
\end{enumerate}

These conditions are stronger than just a property to have a separating family
of $\ast$-homomorphisms to matrix algebras. Namely,

\medskip

\noindent \textbf{Example \ref{example}} \textit{There exists a residually
finite-dimensional (RFD) nuclear $C^{*}$-algebra which has finite-dimensional
irreducible representations of all dimensions but is not matricially tracially
stable.}

\medskip

However a Type I $C^{*}$-algebra is $W^{*}$-factor tracially stable (in
particular, matricially tracially stable) when it has sufficiently many matrix
representations (for example, to have a 1-dimensional representation is enough):

\medskip

\noindent \textbf{Corollary \ref{GCR}} \textit{Suppose $\mathcal{A}$ is a type
I separable unital $C^{*}$-algebra such that for all but finitely many
positive integers $n$ it has a unital n-dimensional representation. Then
$\mathcal{A}$ is $W^{*}$-factor tracially stable. In particular, for any type
I $C^{*}$-algebra $\mathcal{A}$, $\mathcal{A }\oplus \mathbb{C}$ is $W^{*}%
$-factor tracially stable.}

\medskip

In section 4 we find a close relationship between matricial tracial stability
and Voiculescu's free entropy dimension $\delta_{0}$. The following result
shows that a matricially tracially stable algebra may be forced to have a lot
of non-unitarily equivalent representations of some given dimension. Below by
\textrm{Rep}$\left(  \mathcal{A},k\right)  /\backsimeq$ we denote the set of
all unital $\ast$-homomorphisms from $\mathcal{A}$ into $\mathcal{M}%
_{k}\left(  \mathbb{C}\right)  $ modulo unitary equivalence.

\medskip

\noindent \textbf{Theorem \ref{cardrep1} } \textit{Suppose $\mathcal{A}%
=$C*$\left(  x_{1},\ldots,x_{n}\right)  $ is matricially tracially stable and
$\tau$ is an embeddable tracial state on $\mathcal{A}$ such that $1<\delta
_{0}\left(  x_{1},\ldots,x_{n}\right)  .$ Then%
\[
\limsup_{k\rightarrow \infty}\frac{\log \mathrm{Card}\left(  \mathrm{Rep}\left(
\mathcal{A},k\right)  /\backsimeq \right)  }{k^{2}} = \infty.
\]
}

\medskip

This allows us to show that for non-(tracially) nuclear $C^{*}$-algebras there
arise new obstructions, other than in the (tracially) nuclear case, for being
matricially tracially stable.

\medskip

\noindent \textbf{Theorem \ref{NonNuclearExample}} \textit{There exists an RFD
$C^{*}$-algebra which has the approximation property from Theorem
\ref{nuclear2} but which is not matricially tracially stable.}

\medskip

In the last section we consider $C^{*}$-tracial stability. In contrast to
RR0-tracial stability, not all commutative $C^{*}$-algebras have this
property. The main result of section 5 is a characterization of $C^{*}%
$-tracial stability for separable commutative $C^{*}$-algebras. For that we
introduce \textit{approximately path-connected spaces}. We say that a topological space $X$ is
\emph{approximately path-connected} if, for any finitely many points $x_1, \ldots, x_n$, one
can find arbitrarily close to them points $x_1^{\prime}, \ldots, x_n'$  which
can be connected by a continuous path.

\medskip

\noindent \textbf{Theorem \ref{C*-tracial}} \textit{Suppose $X$ is a compact
metric space. The following are equivalent: }

\begin{enumerate}
\item \textit{$C\left(  X\right)  $ is $C^{*}$-tracially stable. }

\item \textit{$X$ is approximately path-connected. }
\end{enumerate}

As a final remark here it is interesting how things are reversed when norm
stability is replaced by $C^{*}$-tracial stability. For example, being a
projection is norm stable, but not $C^{*}$-tracially stable. Indeed, if
$\left \{  f_{n}\right \}  $ is any sequence of functions in $C\left[
0,1\right]  $ with trace $\rho \left(  f\right)  =\int_{0}^{1}f\left(
x\right)  dx$ and $0\leq f_{n}\leq1$ and $f_{n}\left(  x\right)
\rightarrow \chi_{\lbrack0,1/2]}\left(  x\right)  $ a.e., then $\left \Vert
f_{n}-f_{n}^{2}\right \Vert _{2}\rightarrow0$, but since $0,1$ are the only
projections, the $f_{n}$'s are not $\left \Vert {}\right \Vert _{2}$-close to a
projection in $C\left[  0,1\right]  .$ On the other hand, for $C^{*}$-tracial
stability, the problems of normality, commuting pairs of unitaries, and
commuting triples of selfadjoint operators the answers are all affirmative, in
contrast to the norm stability.

\bigskip

\noindent \textbf{Acknowledgements.} The first author gratefully acknowledges a
Collaboration Grant from the Simons Foundation. The research of the
second-named author was supported by the Polish National Science Centre grant
under the contract number DEC- 2012/06/A/ST1/00256 and from the Eric Nordgren
Research Fellowship Fund at the University of New Hampshire.

\section{Preliminaries}

\textbf{1.1. Ultraproducts.}

\noindent If a unital C*-algebra $\mathcal{B}$ has a tracial state $\rho$, we
denote the $2$-norm (seminorm) given by $\rho$ as $\left \Vert \cdot \right \Vert
_{2}=\left \Vert \cdot \right \Vert _{2,\rho}$ defined by%
\[
\left \Vert b\right \Vert _{2}=\rho \left(  b^{\ast}b\right)  ^{1/2}.
\]
We also denote the GNS representation for the state $\rho$ by $\pi_{\rho}$.

Suppose $I$ is an infinite set and $\alpha$ is an ultrafilter on $I$. We say
$\alpha$ is \emph{nontrivial} if there is a sequence $\left \{  E_{n}\right \}
$ in $\alpha$ such that $\cap_{n}E_{n}=\varnothing$. Suppose $\alpha$ is a
nontrivial ultrafilter on a set $I$ and, for each $i\in I$, suppose
$\mathcal{A}_{i}$ is a unital C*-algebra with a tracial state $\rho_{i}$. The
\emph{tracial ultraproduct} $%
{\displaystyle \prod_{i\in I}^{\alpha}}
\left(  \mathcal{A}_{i},\rho_{i}\right)  $ is the C*-product $%
{\displaystyle \prod_{i\in I}}
\mathcal{A}_{i}$ modulo the ideal $\mathcal{J}_{\alpha}$ of all elements
$\left \{  a_{i}\right \}  $ in $%
{\displaystyle \prod_{i\in I}}
\mathcal{A}_{i}$ for which%
\[
\lim_{i\rightarrow \alpha}\left \Vert a_{i}\right \Vert _{2,\rho_{i}}^{2}%
=\lim_{i\rightarrow \alpha}\rho_{i}\left(  a_{i}^{\ast}a_{i}\right)  =0.
\]
We denote the coset of an element $\left \{  a_{i}\right \}  $ in $%
{\displaystyle \prod_{i\in I}}
\mathcal{A}_{i}$ by $\left \{  a_{i}\right \}  _{\alpha}$.

Tracial ultraproducts for factor von Neumann algebras was first introduced by
S. Sakai \cite{Sakai} where he proved that a tracial ultraproduct of finite
factor von Neumann algebras is a finite factor. More recently, it was shown in
\cite{Hadwin-Li} that a tracial ultraproduct $%
{\displaystyle \prod_{i\in I}^{\alpha}}
\left(  \mathcal{A}_{i},\rho_{i}\right)  $ of C*-algebras is always a von
Neumann algebra with a faithful normal tracial state $\rho_{\alpha}$ defined
by%
\[
\rho_{a}\left(  \left \{  a_{i}\right \}  _{\alpha}\right)  =\lim_{i\rightarrow
\alpha}\rho_{i}\left(  a_{i}\right)  .
\]
If there is no confusion, we will denote it just by $\rho$.

\noindent \textbf{1.2. Background theorem.}

\noindent The next theorem is a  key tool for some of our results. It says,
that two tuples that are both close to the same tuple in a hyperfinite von
Neumann algebra are nearly unitarily equivalent to each other.

\begin{theorem}
\label{Shanghai} (\cite{Shanghai}) Suppose $\mathcal{A}=W^{\ast}\left(
x_{1},\ldots,x_{s}\right)  $ is a hyperfinite von Neumann algebra with a
faithful normal tracial state $\rho$. For every $\varepsilon>0$ there is a
$\delta>0$ and an $N\in \mathbb{N}$ such that, for every unital C*-algebra
$\mathcal{B}$ with a factor tracial state $\tau$ and $a_{1},\ldots,a_{s}%
,b_{1},\ldots,b_{s}\in \mathcal{B}$, if, for every $\ast$-monomial $m\left(
t_{1},\ldots,t_{s}\right)  $ with degree at most $N$,
\[
\left \Vert \tau \left(  m\left(  a_{1},\ldots a_{s}\right)  \right)
-\rho \left(  m\left(  x_{1},\ldots x_{s}\right)  \right)  \right \Vert
_{2}<\delta,
\]%
\[
\left \Vert \tau \left(  m\left(  b_{1},\ldots b_{s}\right)  \right)
-\rho \left(  m\left(  x_{1},\ldots x_{s}\right)  \right)  \right \Vert
_{2}<\delta,
\]
then there is a unitary element $u\in \mathcal{B}$ such that%
\[
\sum_{k=1}^{s}\left \Vert ua_{k}u^{\ast}-b_{k}\right \Vert _{2}<\varepsilon.
\]

\end{theorem}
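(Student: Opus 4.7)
The plan is to proceed by contradiction through a tracial ultraproduct. If the theorem fails for some $\varepsilon_0>0$, then for each $n$ one can produce a unital $C^*$-algebra $\mathcal{B}_n$ with factor tracial state $\tau_n$ and tuples $(a^{(n)}_k)_{k=1}^s$, $(b^{(n)}_k)_{k=1}^s$ in $\mathcal{B}_n$ whose $*$-moments of degree at most $n$ both match those of $(x_1,\dots,x_s)$ to within $1/n$, yet with $\sum_k\|ua^{(n)}_ku^*-b^{(n)}_k\|_2\ge\varepsilon_0$ for every unitary $u\in\mathcal{B}_n$. Fix a free ultrafilter $\alpha$ on $\mathbb{N}$ and form $\mathcal{N}=\prod^\alpha(\mathcal{B}_n,\tau_n)$. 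By the fact recalled in Section 1.1, $\mathcal{N}$ is a finite von Neumann algebra with faithful normal trace $\tau_\alpha$; because each $\tau_n$ is a factor trace, the standard ultraproduct argument upgrades $\mathcal{N}$ to a finite factor.

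Set $A_k=\{a^{(n)}_k\}_\alpha$ and $B_k=\{b^{(n)}_k\}_\alpha$. The moment assumptions force $\tau_\alpha(m(A_1,\dots,A_s))=\rho(m(x_1,\dots,x_s))=\tau_\alpha(m(B_1,\dots,B_s))$ for every $*$-monomial $m$, so both $x_k\mapsto A_k$ and $x_k\mapsto B_k$ extend to trace-preserving normal $*$-embeddings of $(\mathcal{A},\rho)$ into $(\mathcal{N},\tau_\alpha)$. The key step is then to invoke the fact that any two trace-preserving embeddings of a hyperfinite tracial von Neumann algebra into a finite factor are conjugate by a single unitary of the factor (Jung's uniqueness theorem in the $II_1$ case, the classical uniqueness of finite-dimensional representations in the matrix case), producing $U\in\mathcal{N}$ unitary with $UA_kU^*=B_k$ for all $k$.

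It then remains to replace $U$ by honest unitaries $u_n\in\mathcal{B}_n$. Writing $U=\exp(iH)$ with $H=H^*\in\mathcal{N}$ and $\|H\|\le\pi$, I would lift $H$ to a self-adjoint sequence $(h_n)$ with $h_n\in\mathcal{B}_n$ and $\|h_n\|\le\pi$ (any bounded lift, symmetrised and cut by the continuous function $t\mapsto\mathrm{sign}(t)\min(|t|,\pi)$, works), and set $u_n:=\exp(ih_n)$, a genuine unitary in $\mathcal{B}_n$. Continuity of multiplication and of continuous functional calculus in $\|\cdot\|_2$ on norm-bounded subsets — automatic in any tracial ultraproduct — gives $\{u_n\}_\alpha=U$ and hence $\lim_{n\to\alpha}\|u_na^{(n)}_ku_n^*-b^{(n)}_k\|_2=\|UA_kU^*-B_k\|_2=0$ for each $k$, contradicting the choice of the counterexample sequence along any set in $\alpha$.

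The hard step is the uniqueness-up-to-unitary statement: showing that two trace-preserving embeddings of the hyperfinite $\mathcal{A}$ into the finite factor $\mathcal{N}$ are conjugate by a unitary of $\mathcal{N}$. For separable hyperfinite $\mathcal{A}$ one exhausts $\mathcal{A}$ by finite-dimensional $*$-subalgebras, conjugates matrix units step by step, and glues the resulting unitaries by a Cauchy-in-$\|\cdot\|_2$ argument using the faithful normal trace on $\mathcal{N}$; the only substantive input beyond linear algebra is that the ambient $\mathcal{N}$ is a finite factor, which is exactly what the factor-trace hypothesis on the $\tau_n$ provides. Everything else in the argument — the moment matching, the ultraproduct construction, the exponential lift — is essentially bookkeeping.
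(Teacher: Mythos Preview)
The paper does not prove Theorem~\ref{Shanghai}: it is quoted in Section~1.2 as a background result from \cite{Shanghai} and used as a black box, so there is no ``paper's own proof'' to compare against. Your outline is the standard ultraproduct-contradiction argument one would expect for a statement of this form, and it is essentially correct, with one omission worth flagging.

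The theorem as stated carries no operator-norm bound on the $a_k,b_k$, and neither does your contradiction setup. Without a uniform bound on $\|a^{(n)}_k\|,\|b^{(n)}_k\|$ the sequences need not define elements of $\prod^\alpha(\mathcal{B}_n,\tau_n)$ at all, since the tracial ultraproduct is a quotient of the $\ell^\infty$-product. In practice one either adds a hypothesis $\|a_k\|,\|b_k\|\le R$ (as in Voiculescu's microstate framework and as implicitly used everywhere the theorem is applied in this paper, where the elements are images of fixed contractions), or one observes that the degree-two moment condition bounds $\|a^{(n)}_k\|_2$ and then cuts down each $a^{(n)}_k$ by functional calculus to something with operator norm $\le\|x_k\|+1$ at negligible $\|\cdot\|_2$-cost. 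You should make this explicit.

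The remaining steps are fine. That the ultraproduct $\mathcal{N}$ is a factor follows because $\prod^\alpha(\mathcal{B}_n,\tau_n)=\prod^\alpha(\pi_{\tau_n}(\mathcal{B}_n)'',\tau_n)$ by Kaplansky density, and an ultraproduct of finite factors is a finite factor (Sakai, as recalled in Section~1.1). The passage from moment equality to trace-preserving embeddings $\mathcal{A}\hookrightarrow\mathcal{N}$ is exactly Lemma~\ref{NotAddedYet}. Your sketch of the uniqueness step---exhaust the hyperfinite $\mathcal{A}$ by finite-dimensional subalgebras, conjugate the matrix units using that equal-trace projections in a finite factor are unitarily equivalent, and run a $\|\cdot\|_2$-Cauchy argument---is the content of the result in \cite{Shanghai}. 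The exponential lift of the unitary is correct and is the standard device.
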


The following lemma is very well known.

\begin{lemma}
\label{NotAddedYet} If $\gamma$ is a $\ast$-homomorphism from $\mathcal{A}$ to
a von Neumann algebra $\mathcal{M}$ with a faithful normal trace $\tau$, then
$\gamma(A)^{\prime \prime}\cong \pi_{\tau \circ \gamma}(A)^{\prime \prime}.$
\end{lemma}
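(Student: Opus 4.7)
The plan is to realize $\mathcal{M}$ in its standard form and identify the GNS Hilbert space of $\tau\circ\gamma$ with the cyclic subspace of $\mathcal{M}$ generated by the vacuum vector under $\gamma(\mathcal{A})$. Let $\sigma=\tau\circ\gamma$, which is a tracial state on $\mathcal{A}$ with GNS triple $(\pi_{\sigma},H_{\sigma},\xi_{\sigma})$. Since $\tau$ is faithful and normal, realize $\mathcal{M}\subseteq B(L^{2}(\mathcal{M},\tau))$ acting by left multiplication, and let $\hat{1}\in L^{2}(\mathcal{M},\tau)$ denote the canonical cyclic and separating vector with $\tau(x)=\langle x\hat{1},\hat{1}\rangle$.

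Next I would build the intertwiner. Define $U_{0}\colon \pi_{\sigma}(\mathcal{A})\xi_{\sigma}\to L^{2}(\mathcal{M},\tau)$ by $U_{0}(\pi_{\sigma}(a)\xi_{\sigma})=\gamma(a)\hat{1}$. The tracial identity
\[
\|\gamma(a)\hat{1}\|^{2}=\tau(\gamma(a)^{\ast}\gamma(a))=\tau(\gamma(a^{\ast}a))=\sigma(a^{\ast}a)=\|\pi_{\sigma}(a)\xi_{\sigma}\|^{2}
\]
shows $U_{0}$ is well-defined and isometric, so it extends to an isometry $U\colon H_{\sigma}\to L^{2}(\mathcal{M},\tau)$ with range $K:=\overline{\gamma(\mathcal{A})\hat{1}}$. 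Setting $p=UU^{\ast}$, which is the projection onto $K$, one checks directly that $U\pi_{\sigma}(a)U^{\ast}=\gamma(a)p=p\gamma(a)p$ for every $a\in\mathcal{A}$. In particular $p$ lies in $\gamma(\mathcal{A})'$, hence commutes with $\gamma(\mathcal{A})''$, and conjugation by $U$ carries the (normal) compression $\gamma(\mathcal{A})''\to\gamma(\mathcal{A})''p$ onto the von Neumann algebra generated by $\{U\pi_{\sigma}(a)U^{\ast}\}_{a\in\mathcal{A}}$, namely $\pi_{\sigma}(\mathcal{A})''$ (after identification via $U$).

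To finish, I would argue that the compression $x\mapsto xp$ from $\gamma(\mathcal{A})''$ to $\gamma(\mathcal{A})''p$ is injective. Since $\hat{1}$ is separating for $\mathcal{M}$ (as $\tau$ is faithful and normal), it is separating for the subalgebra $\gamma(\mathcal{A})''$. If $xp=0$ then $x\hat{1}=xp\hat{1}=0$, and the separating property forces $x=0$. Combined with the previous step this yields a normal $\ast$-isomorphism $\gamma(\mathcal{A})''\cong\gamma(\mathcal{A})''p\cong\pi_{\sigma}(\mathcal{A})''$, as required.

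The only genuinely substantive point is the injectivity of the compression; everything else is bookkeeping with the GNS construction. If $\tau$ were merely a trace and not faithful, $\hat{1}$ would cease to be separating and the argument would collapse, so faithfulness of $\tau$ is essential and is the hypothesis one really has to use.
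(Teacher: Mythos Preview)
Your argument is correct and is the standard GNS/standard-form proof of this fact: realize $\mathcal{M}$ on $L^{2}(\mathcal{M},\tau)$, intertwine $\pi_{\tau\circ\gamma}$ with the compression of $\gamma$ to the cyclic subspace $\overline{\gamma(\mathcal{A})\hat{1}}$, and use that $\hat{1}$ is separating (from faithfulness of $\tau$) to see the compression is injective on $\gamma(\mathcal{A})''$. The paper does not supply a proof at all---it simply declares the lemma ``very well known''---so there is nothing to compare; your write-up would serve perfectly well as the omitted justification.
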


\section{Tracial stability}

Suppose $\mathcal{A}$ is a unital C*-algebra and $\pi:\mathcal{A}\rightarrow%
{\displaystyle \prod_{i\in I}^{\alpha}}
\left(  \mathcal{A}_{i},\rho_{i}\right)  $. We say that $\pi$ is
\emph{approximately liftable} if there is a set $E\in \alpha$ and for every
$i\in E$ there is a unital *-homomorphism $\pi_{i}:\mathcal{A}\rightarrow
\mathcal{A}_{i}$ such that, for every $a\in \mathcal{A}$,%
\[
\pi \left(  a\right)  =\left \{  \pi_{i}\left(  a\right)  \right \}  _{\alpha
}\text{ }%
\]
where we arbitrarily define $\pi_{i}\left(  a\right)  =0$ when $i\notin E$.

It actually makes no difference how we define $\pi_{i}\left(  a\right)  $ when
$i\notin E$ since the equivalence class $\left \{  \pi_{i}\left(  a\right)
\right \}  _{\alpha}$ does not change.

Suppose $\mathcal{C}$ is a class of unital C*-algebras that is closed under
isomorphisms. We say that a separable unital C*-algebra $\mathcal{A}$ is
$\mathcal{C}$\emph{-tracially stable} if every unital $\ast$-homomorphism from
$\mathcal{A}$ into any tracial ultraproduct $%
{\displaystyle \prod_{i\in I}^{\alpha}}
\left(  \mathcal{A}_{i},\rho_{i}\right) $, with $\mathcal{A}_{i} \in \mathcal{C}$
and $\rho_i$ any trace on $\mathcal{A}_{i}$,
is approximately liftable.

Thus $\mathcal{A}$ is \emph{C*-tracially stable} if every unital $\ast
$-homomorphism from $\mathcal{A}$ into any tracial ultraproduct is
approximately liftable.

We say that $\mathcal{A}$ is \emph{matricially tracially stable} if every
unital $\ast$-homomorphism from $\mathcal{A}$ into an ultraproduct of full
matrix algebras $\mathcal{M}_{n}\left(  \mathbb{C}\right)  $ is approximately liftable.

We say that $\mathcal{A}$ is \emph{finite-dimensionally tracially stable} if
every unital $\ast$-homomorphism from $\mathcal{A}$ into a tracial
ultraproduct of finite-dimensional C*-algebras is approximately liftable.

We say that $\mathcal{A}$ is \emph{W*-tracially stable} if every unital $\ast
$-homomorphism from $\mathcal{A}$ into a tracial ultraproduct of von Neumann
algebras is approximately liftable.

We say that $\mathcal{A}$ is \emph{W*-factor tracially stable} if every unital
$\ast$-homomorphism from $\mathcal{A}$ into a tracial ultraproduct of factor
von Neumann algebras is approximately liftable.

\begin{remark} One can also define tracial stability in the non-unital category. For instance
we can define the non-unital version of matricial tracial stability by saying that
 a separable unital C*-algebra $\mathcal{A}$ is
matricially tracially stable if every $\ast$-homomorphism from
$\mathcal{A}$ into an ultraproduct of full
matrix algebras $\mathcal{M}_{n}\left(  \mathbb{C}\right)  $
is liftable. It is easy to show that $\mathcal A$ is matricially tracially stable in the non-unital category iff
$\tilde{\mathcal A}$ is matricially tracially stable in the unital category. Here $\tilde A =
    A^+$ when $\mathcal A $ is  non-unital and $\tilde A =  A \oplus \mathbb C$ when $\mathcal A $ is  unital.
   Thus the non-unital version can be easily obtained from the unital one.
   \end{remark}

\medskip

To see how this ultraproduct formulation represents the $\varepsilon$-$\delta$
definition of tracial stability mentioned in the introduction, suppose
$\mathcal{A}$ is the universal unital C*-algebra generated by contractions $x_{1}%
,\ldots,x_{s}$ subject to a relation $p\left(  x_{1},\ldots,x_{s}\right)  =0.$
Suppose $\pi:\mathcal{A}\rightarrow%
{\displaystyle \prod_{i\in I}^{\alpha}}
\left(  \mathcal{A}_{i},\rho_{i}\right)  $ is a unital $\ast$-homomorphism
such that, for each $1\leq k\leq s$,
\[
\pi \left(  x_{k}\right)  =\left \{  x_{k}\left(  i\right)  \right \}  _{\alpha
}.
\]
We then have%
\[
0=\left \Vert \pi \left(  p\left(  x_{1},\ldots x_{s}\right)  \right)
\right \Vert _{2,\rho_{\alpha}}=\left \Vert p\left(  \pi \left(  x_{1}\right)
,\ldots \pi \left(  x_{s}\right)  \right)  \right \Vert _{2,\rho_{\alpha}}%
=\lim_{i\rightarrow \alpha}\left \Vert p\left(  x_{1}\left(  i\right)
,\ldots,x_{s}\left(  i\right)  \right)  \right \Vert _{2,\rho_{i}}.
\]
Since $\alpha$ is a nontrivial ultrafilter on $I$, there is a decreasing
sequence\medskip \ $E_{1}\supset E_{2}\supset \cdots$ in $\alpha$ such that
$\cap_{k\in \mathbb{N}}E_{k}=\varnothing$. First suppose $\mathcal{A}$ is
C*-tracially stable. Then, for each positive integer $m$ there is a number
$\delta_{m}>0$ such that, when $$\left \Vert p\left(  x_{1}\left(  i\right)
,\ldots,x_{s}\left(  i\right)  \right)  \right \Vert _{2,\rho_{i}}<\delta_{m}$$
there is a unital $\ast$-homomorphism $\gamma_{m,i}:\mathcal{A}%
\mathbf{\rightarrow}\mathcal{A}_{i}$ such that%
\[
\max_{1\leq k\leq s}\left \Vert x_{k}\left(  i\right)  -\gamma_{m,i} \left(
x_{k}\right)  \right \Vert _{2,\rho_{i}}<1/m.
\]
Since $\lim_{i\rightarrow \alpha}\left \Vert p\left(  x_{1}\left(  i\right)
,\ldots,x_{s}\left(  i\right)  \right)  \right \Vert _{2,\rho_{i}}=0$ we can
find a decreasing sequence $\{A_{n}\}$ in $\alpha$ with $A_{n}\subset E_{n}$
such that, for every $i\in A_{n}$%
\[
\left \Vert p\left(  x_{1}\left(  i\right)  ,\ldots,x_{s}\left(  i\right)
\right)  \right \Vert _{2,\rho_{i}} \le \delta_{n}.
\]
For $i\in A_{n}\backslash A_{n+1}$ we define $\pi_{i}=\gamma_{n,i}$. We then
have that $\left \{  \pi_{i}\right \}  _{i\in A_{1}}$ is an approximate lifting
of $\pi$.

On the other hand, if $\mathcal{A}$ is not C*-tracially stable, then there is
an $\varepsilon>0$ such that, for every positive integer $n$ there is a
tracial unital C*-algebra $\left(  \mathcal{A}_{n},\rho_{n}\right)  $ and
$x_{1}\left(  n\right)  ,\ldots,x_{s}\left(  n\right)  $ such that%
\[
\left \Vert p\left(  x_{1}\left(  n\right)  ,\ldots,x_{s}\left(  n\right)
\right)  \right \Vert _{2,\rho_{n}}<1/n,
\]
but for every unital $\ast$-homomorphism $\gamma:\mathcal{A}%
\rightarrow \mathcal{A}_{n}$%
\[
\max_{1\leq k\leq s}\left \Vert x_{k}\left(  n\right)  -\gamma \left(
x_{k}\right)  \right \Vert _{2,\rho_{n}}\geq \varepsilon.
\]
If we let $\alpha$ be any free ultrafilter on $\mathbb{N}$, we have that the
map%
\[
\pi \left(  x_{k}\right)  =\left \{  x_{k}\left(  n\right)  \right \}  _{\lambda}%
\]
extends to a unital $\ast$-homomorphism into $%
{\displaystyle \prod_{n\in \mathbb{N}}^{\alpha}}
\left(  \mathcal{A}_{n},\rho_{n}\right)  $ that is not approximately liftable.

\medskip

The following result shows that pointwise $\left \Vert {\;}\right \Vert _{2}%
$-limits of approximately liftable representations are approximately liftable.

\begin{lemma}
\label{LimitOfLiftable} Suppose $\mathcal{A}=C^{\ast}\left(  \left \{
b_{1},b_{2},\ldots \right \}  \right)  $, $\left \{  \left(  \mathcal{A}_{i}%
,\rho_{i}\right)  :i\in I\right \}  $ is a family of tracial C*-algebras,
$\alpha$ is a nontrivial ultrafilter on $I,$ and $\pi:\mathcal{A}\rightarrow%
{\displaystyle \prod_{i\in I}^{\alpha}}
\left(  \mathcal{A}_{i},\rho_{i}\right)  $ is a unital $\ast$-homomorphism
such that, for each $k\in \mathbb{N}$,
\[
\pi \left(  b_{k}\right)  =\left \{  b_{k}\left(  i\right)  \right \}  _{\alpha
}.
\]
The following are equivalent:

\begin{enumerate}
\item $\pi$ is approximately liftable

\item For every $\varepsilon>0$ and every $N\in \mathbb{N}$, there is a set
$E\in \alpha$ and for every $i\in E$ there is a unital $\ast$-homomorphism
$\pi_{i}:\mathcal{A}\rightarrow \mathcal{A}_{i}$ such that, for $1\leq k\leq N$
and every $i\in E$,%
\[
\left \Vert \pi_{i}\left(  b_{k}\right)  -b_{k}\left(  i\right)  \right \Vert
_{2,\rho_{i}}<\varepsilon.
\]

\end{enumerate}
\end{lemma}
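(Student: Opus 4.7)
The plan is that $(1) \Rightarrow (2)$ is immediate from definitions, while $(2) \Rightarrow (1)$ is a diagonal argument adapted to the ultrafilter $\alpha$. For the easy direction, if $\{\pi_i\}_{i \in E_0}$ approximately lifts $\pi$, then for each fixed $k$ the equality $\{\pi_i(b_k) - b_k(i)\}_\alpha = 0$ in the ultraproduct forces $\lim_{i\to\alpha}\|\pi_i(b_k) - b_k(i)\|_{2,\rho_i} = 0$, so given $\varepsilon$ and $N$ the set $\bigcap_{k=1}^N \{i \in E_0 : \|\pi_i(b_k) - b_k(i)\|_{2,\rho_i} < \varepsilon\}$ is a finite intersection of $\alpha$-large sets and serves as the required $E$.

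For $(2)\Rightarrow (1)$, I would exploit the nontriviality of $\alpha$ to fix a decreasing sequence $F_1 \supset F_2 \supset \cdots$ in $\alpha$ with $\bigcap_n F_n = \varnothing$. Applying $(2)$ with $\varepsilon = 1/n$ and $N = n$ produces $E_n' \in \alpha$ and unital $\ast$-homomorphisms $\pi_i^{(n)}\colon \mathcal{A} \to \mathcal{A}_i$ for $i \in E_n'$ satisfying $\|\pi_i^{(n)}(b_k) - b_k(i)\|_{2,\rho_i} < 1/n$ for $1 \le k \le n$. Set $E_n := F_n \cap \bigcap_{m \le n} E_m' \in \alpha$; then $E_1 \supset E_2 \supset \cdots$ with empty intersection. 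For $i \in E_1$, let $n(i) := \max\{n : i \in E_n\}$ (finite because $\bigcap_n E_n = \varnothing$), define $\pi_i := \pi_i^{(n(i))}$, and take $\pi_i(a) := 0$ otherwise, as permitted by the definition of approximate liftability.

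It remains to verify that these $\pi_i$ approximately lift $\pi$. Fix $k$ and $\varepsilon > 0$; for any $n_0 \ge \max(k, 1/\varepsilon)$ and every $i \in E_{n_0} \in \alpha$, one has $n(i) \ge n_0 \ge k$, so by construction $\|\pi_i(b_k) - b_k(i)\|_{2,\rho_i} < 1/n(i) \le 1/n_0 \le \varepsilon$. Hence $\{\pi_i(b_k)\}_\alpha = \pi(b_k)$ for every generator, and since $a \mapsto \{\pi_i(a)\}_\alpha$ is a unital $\ast$-homomorphism on $\mathcal{A}$ agreeing with $\pi$ on the generating set $\{b_k\}$ of $\mathcal{A} = C^*(\{b_k\})$, the two agree on all of $\mathcal{A}$ by continuity.

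The only delicate point is the diagonalization itself: if $I = \mathbb{N}$ one could almost immediately set $\pi_i := \pi_i^{(i)}$, but for an arbitrary (possibly uncountable) index set the naive diagonal has no meaning. The countable hierarchy $\{F_n\}$ coming from the nontriviality of $\alpha$ is precisely what allows a well-defined assignment $i \mapsto n(i)$ that realizes the lifting uniformly across $I$.
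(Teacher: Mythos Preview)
Your proof is correct and follows essentially the same route as the paper's: both use the nontriviality of $\alpha$ to obtain a decreasing sequence $F_1\supset F_2\supset\cdots$ in $\alpha$ with empty intersection, intersect with the sets $E_n'$ coming from condition~(2) at level $(\varepsilon,N)=(1/n,n)$, and define $\pi_i$ according to the shell $E_n\setminus E_{n+1}$ in which $i$ lies (your $n(i)=\max\{n:i\in E_n\}$ is exactly this). The concluding step---that agreement on the generators $b_k$ propagates to all of $\mathcal{A}$ because $\{a:\pi(a)=\{\pi_i(a)\}_\alpha\}$ is a unital $C^*$-subalgebra---is also identical to the paper's.
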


\begin{proof}
Obviously 1) implies 2). We need to prove the opposite implication. Since
$\alpha$ is nontrivial, there is a decreasing sequence $\left \{
B_{n}\right \}  $ of elements of $\alpha$ such that $\cap_{n=1}^{\infty}%
B_{n}=\varnothing.$ For each $n\in \mathbb{N}$, let $N=n$ and $\varepsilon=1/n$
and let $E_{n}\in \alpha$ and, for each $i\in E_{n}$, choose a unital $\ast
$-homomorphism $\pi_{n,i}:\mathcal{A}\rightarrow \mathcal{A}_{i}$ such that%
\[
\left \Vert \pi_{n,i}\left(  b_{k}\right)  -b_{k}\left(  i\right)  \right \Vert
_{2,\rho_{i}}<1/n
\]
for $1\leq k\leq n$. We define $F_{n}=\cap_{k=1}^{n}\left(  B_{k}\cap
E_{k}\right)  $ and for $i\in F_{n+1}\backslash F_{n}$ we define $\pi_{i}%
=\pi_{n,i}$. Since $\cap_{n=1}^{\infty} F_{n} = \varnothing$, $\pi_{i}$'s are
defined for each $i\in \bigcup_{n=1}^{\infty} F_{n} \in \alpha$. Clearly,
\[
\left \{  \pi_{i}\left(  b_{k}\right)  \right \}  _{\alpha}=\left \{
b_{k}\left(  i\right)  \right \}  _{\alpha}=\pi \left(  b_{k}\right)
\]
for $k=1,2,\ldots$. Since the set $\mathcal{S}$ of all $a\in \mathcal{A}$ such
that $\left \{  \pi_{i}\left(  a\right)  \right \}  _{\alpha}=\pi \left(
a\right)  $ is a unital C*-algebra, we see that $\mathcal{S}=\mathcal{A}$ and
that $\pi$ is approximately liftable.
\end{proof}

\medskip

Although we consider all tracial ultraproducts, however when the algebra
$\mathcal{A}$ is separable, we need to consider only ultraproducts over
$\mathbb{N}$ with respect to one non-trivial ultrafilter.

\begin{lemma}
\label{OnlyOneUltrafilter} Suppose $\mathcal{A}=C^{\ast}\left(  x_{1}%
,x_{2},\ldots \right)  $ is a separable unital C*-algebra, $\mathcal{C}$ is a
class of unital C*-algebras closed under isomorphism and $\alpha$ is a
nontrivial ultrafilter on $\mathbb{N}$. The following are equivalent:

\begin{enumerate}
\item $\mathcal{A}$ is $\mathcal{C}$-tracially stable

\item If $\left \{  \left(  \mathcal{B}_{n},\gamma_{n}\right)  \right \}  $ is a
sequence of tracial C*-algebras in $\mathcal{C}$ and $\pi:\mathcal{A}%
\rightarrow%
{\displaystyle \prod_{i\in I}^{\alpha}}
\left(  \mathcal{B}_{i},\gamma_{i}\right) $ is a unital $\ast$-homomorphism,
then $\pi$ is approximately liftable.

\item For every $\varepsilon>0$, for every positive integer $s$, and for every
tracial state $\rho$ on $\mathcal{A}$, there is a positive integer $N$ such
that if $\mathcal{B}\in \mathcal{C}$ and $\gamma$ is a tracial state on
$\mathcal{B}$ and $b_{1},\ldots,b_{N}\in \mathcal{B}$ such that $\left \Vert
b_{k}\right \Vert \leq1+\left \Vert x_{k}\right \Vert $ for $1\leq k\leq N$ and
\[
\left \vert \rho \left(  m\left(  x_{1},\ldots,x_{N}\right)  \right)
-\gamma \left(  m\left(  b_{1},\ldots,b_{N}\right)  \right)  \right \vert
<\frac{1}{N}%
\]
for all $\ast$-monomials $m\left(  t_{1},\ldots,t_{N}\right)  $ with degree at
most $N$, then there is a unital $\ast$-homomorphism $\pi:\mathcal{A}%
\rightarrow \mathcal{B}$ such that%
\[
\sum_{k=1}^{s}\left \Vert \pi \left(  x_{k}\right)  -b_{k}\right \Vert
_{2,\gamma}<\varepsilon \text{.}%
\]

\end{enumerate}
\end{lemma}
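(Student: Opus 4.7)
The plan is to prove the cyclic chain $(1) \Rightarrow (2) \Rightarrow (3) \Rightarrow (1)$. The first implication is immediate, since $(2)$ is just $\mathcal{C}$-tracial stability specialized to the chosen ultrafilter on $\mathbb{N}$.

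For $(3) \Rightarrow (1)$, take any unital $\ast$-homomorphism $\pi: \mathcal{A} \to \prod_{i \in I}^{\alpha}(\mathcal{A}_i, \rho_i)$ with $\mathcal{A}_i \in \mathcal{C}$, pick representatives $\pi(x_k) = \{x_k(i)\}_{\alpha}$ with $\|x_k(i)\| \leq 1 + \|x_k\|$ (possible since $\|\pi(x_k)\| \leq \|x_k\|$), and let $\rho := \rho_{\alpha} \circ \pi$ be the pulled-back trace, so that $\rho(m(x_1, x_2, \ldots)) = \lim_{i \to \alpha} \rho_i(m(x_1(i), x_2(i), \ldots))$ for every $\ast$-monomial $m$. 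For each $\varepsilon > 0$ and $s \in \mathbb{N}$, let $N = N(\varepsilon, s, \rho)$ be as in $(3)$; intersecting the finitely many $\alpha$-large sets obtained from the $\ast$-monomials of degree $\leq N$ in $t_1, \ldots, t_N$ produces a set $E \in \alpha$ on which $(3)$ furnishes unital $\ast$-homomorphisms $\pi_i: \mathcal{A} \to \mathcal{A}_i$ with $\sum_{k=1}^{s} \|\pi_i(x_k) - x_k(i)\|_{2, \rho_i} < \varepsilon$. Applying Lemma \ref{LimitOfLiftable} then yields that $\pi$ is approximately liftable.

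For $(2) \Rightarrow (3)$, I argue by contradiction. A failure of $(3)$ gives $\varepsilon > 0$, $s$, a trace $\rho$ on $\mathcal{A}$, and for each $N \in \mathbb{N}$ a tracial C*-algebra $(\mathcal{B}_N, \gamma_N) \in \mathcal{C}$ with elements $b_1(N), \ldots, b_N(N)$ (extended by $0$ for $k > N$) of controlled norm whose degree-$\leq N$ $\ast$-moments match those of $(x_1, \ldots, x_N)$ to within $1/N$, yet admitting no $\ast$-homomorphism $\mathcal{A} \to \mathcal{B}_N$ with $\sum_{k=1}^{s} \|\pi(x_k) - b_k(N)\|_{2, \gamma_N} < \varepsilon$. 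Forming $\mathcal{B}_\infty = \prod_{N}^{\alpha}(\mathcal{B}_N, \gamma_N)$ and setting $y_k := \{b_k(N)\}_{\alpha}$, the $\ast$-distribution of $(y_k)$ with respect to $\rho_{\alpha}$ coincides with that of $(x_k)$ with respect to $\rho$. I then build the required $\pi: \mathcal{A} \to \mathcal{B}_\infty$ with $\pi(x_k) = y_k$ via the tracial GNS construction: let $\mathcal{N} := \pi_{\rho}(\mathcal{A})'' = W^{\ast}(\pi_{\rho}(x_k))$ with its faithful normal tracial state $\tilde{\rho}$, and let $\phi: \mathcal{N} \to W^{\ast}(y_k) \subset \mathcal{B}_\infty$ be the trace-preserving $\ast$-isomorphism produced by the matching of $\ast$-distributions; then $\pi := \phi \circ \pi_{\rho}$ does the job. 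Applying $(2)$ then produces $\ast$-homomorphisms $\pi_N: \mathcal{A} \to \mathcal{B}_N$ along an $\alpha$-large set with $\|\pi_N(x_k) - b_k(N)\|_{2, \gamma_N} \to 0$ along $\alpha$ for each $k$, contradicting the choice of the $b_k(N)$. The main technical point is this construction of $\pi$: matching moments alone does not extend $x_k \mapsto y_k$ from the free $\ast$-algebra to a $\ast$-homomorphism out of $\mathcal{A}$, because the C*-norm of a polynomial in the $y_k$ inside $\mathcal{B}_\infty$ need not be dominated by its norm in $\mathcal{A}$; routing through the finite von Neumann algebra $\mathcal{N}$ bypasses this, since in the tracial setting the $\ast$-distribution of a generating tuple determines the generated W*-algebra up to trace-preserving $\ast$-isomorphism.
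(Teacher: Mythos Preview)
Your proof is correct and follows the same cyclic scheme $(1)\Rightarrow(2)\Rightarrow(3)\Rightarrow(1)$ as the paper. The only differences are expository: in $(3)\Rightarrow(1)$ the paper carries out explicitly the decreasing-sets gluing that you delegate to Lemma~\ref{LimitOfLiftable}, and in $(2)\Rightarrow(3)$ the paper simply asserts that $\pi(x_k)=b_k$ is well-defined ``since $\gamma$ is faithful,'' whereas you supply the justification by routing through $\pi_\rho(\mathcal{A})''$---your extra care here is warranted, since faithfulness of $\gamma$ is exactly what makes $C^*(b_k)\cong\pi_\rho(\mathcal{A})$ (equivalently, $W^*(b_k)\cong\pi_\rho(\mathcal{A})''$) and hence bounds $\|p(b_k)\|$ by $\|p(x_k)\|$.
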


\begin{proof}
$\left(  1\right)  \Rightarrow \left(  2\right)  $. This is obvious.

$\left(  2\right)  \Rightarrow \left(  3\right)  $. Assume $\left(  3\right)  $
is false. Then there is an $\varepsilon>0$, an integer $s\in \mathbb{N}$ and a
tracial state $\rho$ on $\mathcal{A}$ and, for each $N\in \mathbb{N}$, there is
a $\mathcal{B}_{N}\in \mathcal{C}$ with a tracial state $\gamma_{N}$ and
$\left \{  b_{N,1},b_{N,2},\ldots,b_{N,N}\right \}  \subset \mathcal{B}_{N}$ such
that
\[
\left \Vert b_{N,k}\right \Vert \leq \left \Vert x_{k}\right \Vert +1
\]
for $1\leq k\leq N<\infty$, and%
\[
\left \vert \rho \left(  m\left(  x_{1},\ldots,x_{N}\right)  \right)
-\gamma_{N}\left(  m\left(  b_{N,1},\ldots,b_{N,N}\right)  \right)
\right \vert <\frac{1}{N}%
\]
for all $\ast$-monomials $m\left(  t_{1},\ldots,t_{N}\right)  $ with degree at
most $N$, such that, for each $N$ there is no $\ast$-homomorphism
$\pi:\mathcal{A}\rightarrow \mathcal{B}_{N}$ such that
\[
\sum_{k=1}^{s}\left \Vert \pi \left(  x_{k}\right)  -b_{N,k}\right \Vert
_{2,\gamma_N}<\varepsilon \text{.}%
\]
For each $1\leq N<k$ let $b_{N,k}=0\in \mathcal{B}_{N}$. For $1\leq k<\infty$,
let
\[
b_{k}=\left \{  b_{N,k}\right \}  _{\alpha}\in%
{\displaystyle \prod_{N\in \mathbb{N}}^{\alpha}}
\left(  \mathcal{B}_{N},\gamma_{N}\right)  .
\]
Let $\gamma$ be the limit trace on $%
{\displaystyle \prod_{N\in \mathbb{N}}^{\alpha}}
\left(  \mathcal{B}_{N},\gamma_{N}\right)  $.
It follows that for every $\ast$-monomial $m\left(  t_{1},\ldots,t_{k}\right)
$ we have%
\begin{equation}\label{existpi}
\rho \left(  m\left(  x_{1},\ldots,x_{k}\right)  \right)  =\gamma \left(
m\left(  b_{1},\ldots,b_{k}\right)  \right)  .
\end{equation}
Define a unital $\ast$-homomorphism $\pi:\mathcal{A}%
\rightarrow %
{\displaystyle \prod_{N\in \mathbb{N}}^{\alpha}}
\left(  \mathcal{B}_{N},\gamma_{N}\right)$ by $$\pi(x_i) = b_i.$$
Since $\gamma$ is faithful, it follows from (\ref{existpi}) that $\pi$ is well-defined and  $\rho=\gamma \circ \pi$.
By $\left(  2\right)  $, $\pi$ is approximately liftable so there is an
$E\in \alpha$ and, for each $N\in E$ a unital $\ast$-homomorphism $\pi
_{N}:\mathcal{A}\rightarrow \mathcal{B}_{N}$ such that, for $1\leq k<\infty$%
\[
\pi \left(  x_{k}\right)  =\left \{  \pi_{N}\left(  x_{k}\right)  \right \}
_{\alpha}.
\]
It follows that
\[
\lim_{N\rightarrow \alpha}\sum_{k=1}^{s}\left \Vert b_{N,k}-\pi_{N}\left(
x_{k}\right)  \right \Vert _{2,\gamma_{N}}=0,
\]
which means for some $N\in \mathbb{N}$,%
\[
\sum_{k=1}^{s}\left \Vert b_{N,k}-\pi_{N}\left(  x_{k}\right)  \right \Vert
_{2,\gamma_{N}}<\varepsilon.
\]
This contradiction implies $\left(  3\right)  $ must be true.

$\left(  3\right)  \Rightarrow \left(  1\right)  $. Suppose $\left(  3\right)
$ is true and $\left \{  \left(  \mathcal{B}_{i},\gamma_{i}\right)  :i\in
I\right \}  $ is a collection of tracial unital C*-algebras with each
$\mathcal{B}_{i}$ in $\mathcal{C}$ and suppose $\beta$ is a nontrivial
ultrafilter on $I$ and $\pi:\mathcal{A}\rightarrow%
{\displaystyle \prod_{i\in I}^{\beta}}
\left(  \mathcal{B}_{i},\gamma_{i}\right)  $. Let $\gamma$ be the limit trace
along $\beta$ and define $\rho=\gamma \circ \pi$. If $k\in \mathbb{N}$, we let
$\varepsilon=\frac{1}{k}$ and $s=k$ we can choose $N_{k}$ as in $\left(
2\right)  $. Since $\beta$ is nontrivial there is a decreasing sequence
$\left \{  E_{j}\right \}  $ in $\beta$ whose intersection is empty. For each
$k\in \mathbb{N}$ write%
\[
\pi \left(  x_{k}\right)  =\left \{  b_{i,k}\right \}  _{\beta}%
\]
with $\left \Vert b_{i,k}\right \Vert \leq \left \Vert x_{k}\right \Vert $ for
every $i\in I$. It follows, for every $\ast$-monomial $m\left(  t_{1}%
,\ldots,t_{k}\right)  $ that%
\[
\lim_{i\rightarrow \beta}\gamma_{i}\left(  m\left(  b_{i,1},\ldots
,b_{i,k}\right)  \right)  =\rho \left(  m\left(  x_{1},\ldots,x_{k}\right)
\right)  .
\]
Thus the set $F_{k}$ consisting of all $i\in I$ such that%
\[
\left \vert \rho \left(  m\left(  x_{1},\ldots,x_{N_{k}}\right)  \right)
-\gamma_{i}\left(  m\left(  b_{i,1},\ldots,b_{i,N_{k}}\right)  \right)
\right \vert <\frac{1}{N_{k}}%
\]
for all $\ast$-monomials $m$ with degree at most $N_{k}$ must be in $\beta$.
For each $k\in \mathbb{N}$ let
\[
W_{k}=E_{k}\cap F_{1}\cap \cdots \cap F_{k}.
\]
For each $i\in W_{k}\backslash W_{k+1}$ there is a unital $\ast$-homomorphism
$\pi_{i}:\mathcal{A}\rightarrow \mathcal{B}_{i}$ such that%
\[
\sum_{j=1}^{k}\left \Vert \pi_{i}\left(  x_{j}\right)  -b_{i,j}\right \Vert
_{2,\gamma_{i}}<\frac{1}{k}.
\]
It is clear for every $k\in \mathbb{N}$ that $\pi \left(  x_{k}\right)
=\left \{  \pi_{i}\left(  x_{k}\right)  \right \}  _{\beta}$. Since $\left \{
a\in \mathcal{A}:\pi \left(  a\right)  =\left \{  \pi_{i}\left(  a\right)
\right \}  _{\beta}\right \}  $ is a unital C*-subalgebra of $\mathcal{A}$,
containing all $x_{1},x_{2},\ldots$, we conclude that $\mathcal{A }= \left \{
a\in \mathcal{A}:\pi \left(  a\right)  =\left \{  \pi_{i}\left(  a\right)
\right \}  _{\beta}\right \}  $ and thus $\pi$ is approximately liftable.
\end{proof}

\bigskip

It is clear that C*-tracially stable implies W*-tracially stable implies
factor tracially stable implies matricially tracially stable. Here are some
slightly more subtle relationships.

\begin{lemma}
Suppose $\mathcal{A}$ is a separable unital C*-algebra. Then $\mathcal{A}$ is
matricially tracially stable if and only if every unital $\ast$-homomorphism
from $\mathcal{A}$ into a tracial ultraproduct of hyperfinite factors is
approximately liftable. Also $\mathcal{A}$ is finite-dimensionally tracially
stable if and only if every unital $\ast$-homomorphism from $\mathcal{A}$ into
a tracial ultraproduct of hyperfinite von Neumann algebras is approximately liftable.
\end{lemma}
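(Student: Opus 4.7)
The ``only if'' direction of each equivalence is immediate, since every full matrix algebra is a hyperfinite factor and every finite-dimensional $C^*$-algebra is a hyperfinite von Neumann algebra; thus a lifting property for maps into ultraproducts of the larger class automatically restricts to one for the smaller class. My plan for the content-bearing direction is to pass to the moment form of tracial stability supplied by condition (3) of Lemma \ref{OnlyOneUltrafilter} and to propagate that condition from the smaller class up to the larger one. I will sketch the matricial/hyperfinite-factor case; the finite-dimensional/hyperfinite von Neumann case is formally the same, with ``matrix subalgebra'' replaced by ``unital finite-dimensional subalgebra'' and the uniqueness-of-trace remark dropped (the class defining finite-dimensional tracial stability already allows an arbitrary trace on the approximant).

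Fix $\varepsilon>0$, $s\in\mathbb{N}$, and a tracial state $\rho$ on $\mathcal{A}$. Matricial tracial stability together with Lemma \ref{OnlyOneUltrafilter}(3) furnishes an integer $N_0$ controlling how precisely moments and norms must match before a tuple in a matrix algebra admits an $\varepsilon/2$-matching $\ast$-homomorphism from $\mathcal{A}$. Given a hyperfinite factor $(\mathcal{B},\gamma)$ and a tuple $b_1,\ldots,b_N$ whose moments match those of $\rho$ to tolerance $1/N$ (for $N\gg N_0$), I would invoke hyperfiniteness of $(\mathcal{B},\gamma)$ to choose a unital matrix subalgebra $F\subseteq \mathcal{B}$ and trace-preserving conditional expectation $E\colon \mathcal{B}\to F$ placing each $b_k$ within 2-norm $\eta$ of $b'_k:=E(b_k)$. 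Because $\mathcal{B}$ is a factor, $\gamma|_F$ is forced to be the normalized trace on $F\cong \mathcal{M}_n(\mathbb{C})$, so $(F,\gamma|_F)$ belongs to the class defining matricial tracial stability, and $\|b'_k\|\le\|b_k\|$ since $E$ is contractive. A standard telescoping bound of the form $\|m(b'_{\cdot})-m(b_{\cdot})\|_{2,\gamma}\le d\,C^{d-1}\eta$ transfers the moment condition from the $b_k$ to the $b'_k$, so the matricial hypothesis applied to $(F,\gamma|_F)$ produces a $\ast$-homomorphism $\pi\colon \mathcal{A}\to F\subseteq \mathcal{B}$ within $\varepsilon/2$ of $(b'_1,\ldots,b'_s)$ in 2-norm, and the triangle inequality completes the verification of condition (3) of Lemma \ref{OnlyOneUltrafilter} for the larger class.

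The only nontrivial ingredient is the hyperfinite approximation itself: the existence of a unital matrix (resp.\ finite-dimensional) subalgebra together with a trace-preserving conditional expectation that simultaneously 2-norm approximates a prescribed finite set of elements to any prescribed precision. This is precisely the defining feature of hyperfiniteness for a finite factor (resp.\ a finite von Neumann algebra with a faithful normal trace), so the main obstacle is purely quantitative: one must pick $N$, $N_0$, and $\eta$ so that the moment errors introduced by truncation stay below $1/N_0$ on all $\ast$-monomials of degree at most $N_0$, while keeping $s\eta<\varepsilon/2$. This balancing is routine, and I do not expect any serious obstacle beyond the setup.
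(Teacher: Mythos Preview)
Your argument is correct, modulo a harmless swap of the labels ``if'' and ``only if'': the immediate direction is that the hyperfinite-factor lifting property implies matricial tracial stability, not the converse. Both your approach and the paper's rest on the same structural fact---inside a hyperfinite factor, any finite tuple is $\|\cdot\|_2$-approximable by elements of a unital matrix subalgebra on which the ambient trace restricts to the normalized trace.

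The execution differs. You invoke the moment criterion (condition~(3) of Lemma~\ref{OnlyOneUltrafilter}) and work inside a single hyperfinite factor: given a tuple with approximately correct $\rho$-moments, project it onto a matrix subalgebra via a trace-preserving conditional expectation, verify that the moment tolerances survive the projection, and then apply the matricial hypothesis to $(F,\gamma|_F)$. The paper bypasses the moment bookkeeping and stays at the ultraproduct level: given $\pi:\mathcal{A}\to\prod^\alpha(\mathcal{M}_i,\tau_i)$ with each $\mathcal{M}_i$ a hyperfinite $II_1$-factor, it replaces each coordinate representative $a_n(i)$ by a nearby element $b_n(i)$ in a matrix subalgebra $\mathcal{B}_i\subset\mathcal{M}_i$ (with the approximation sharpening along a decreasing sequence in $\alpha$), so that $\pi$ actually lands in the matrix ultraproduct $\prod^\alpha(\mathcal{B}_i,\tau_i)$, and then applies matricial stability directly. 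The paper's route is shorter and avoids your $\eta$-versus-$N_0$ balancing; yours is more explicit about the single-algebra mechanism and would be the natural starting point if one wanted effective bounds.
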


\begin{proof}
We prove the first statement; the second follows in a similar fashion. The
"if" part is clear since $\mathcal{M}_{n}\left(  \mathbb{C}\right)  $ is a
hyperfinite factor. Suppose $\mathcal{A}$ is matricially tracially stable, and
suppose $\left \{  \left(  \mathcal{M}_{i},\tau_{i}\right)  :i\in I\right \}  $
is a family with each $\mathcal{M}_{i}$ a hyperfinite factor and $\tau_{i}$ is
the unique normal faithful tracial state on $\mathcal{M}_{i}$. Suppose also
that $\alpha$ is a nontrivial ultrafilter on $I$ and $\pi:\mathcal{A}%
\rightarrow%
{\displaystyle \prod_{i\in I}^{\alpha}}
\left(  \mathcal{M}_{i},\tau_{i}\right)  $ is a unital $\ast$-homomorphism. If
$E=\left \{  i\in I:\dim \mathcal{M}_{i}<\infty \right \}  \in \alpha$, then $%
{\displaystyle \prod_{i\in I}^{\alpha}}
\left(  \mathcal{M}_{i},\tau_{i}\right)  =%
{\displaystyle \prod_{i\in E}^{\alpha}}
\left(  \mathcal{M}_{i},\tau_{i}\right)  $ is a tracial product of matrix
algebras and $\pi$ is approximately liftable. If $I\backslash E\in \alpha,$ we
can assume that each $\mathcal{M}_{i}$ is a hyperfinite $II_{1}$-factor. Since
$\alpha$ is nontrivial, there is a decreasing sequence $\left \{
E_{n}\right \}  $ in $\alpha$ such that $E_{1}=I$ and $\cap_{n=1}^{\infty}%
E_{n}=\varnothing$. Suppose $\left \{  a_{n}\right \}  $ is a dense sequence in
$\mathcal{A}$ and, for each $n\in \mathbb{N}$ write
\[
\pi \left(  a_{n}\right)  =\left \{  a_{n}\left(  i\right)  \right \}  _{\alpha}%
\]
with $\left \Vert a_{n}\left(  i\right)  \right \Vert \leq \left \Vert
a_{n}\right \Vert $, for all  $n$ and $i$. If $n\in \mathbb{N}$ and $i\in
E_{n}\backslash E_{n+1}$, we can find a unital subalgebra $\mathcal{B}%
_{i}\subset \mathcal{M}_{i}$ such that $\mathcal{B}_{i}$ is isomorphic to
$\mathcal{M}_{k_{i}}\left(  \mathbb{C}\right)  $ for some $k_{i}\in \mathbb{N}$
an such that there is a sequence $\left \{  b_{m}\left(  i\right)  \right \}  $
in $\mathcal{B}_{i}$ with $\left \Vert b_{m}\left(  i\right)  \right \Vert
\leq \left \Vert a_{m}\right \Vert $ for each $m$ and such that%
\[
\left \Vert a_{m}\left(  i\right)  -b_{m}\left(  i\right)  \right \Vert _{2}%
\leq \frac{1}{n}%
\]
for $1\leq m\leq n$. It follows that
\[
\pi \left(  a_{n}\right)  =\left \{  a_{n}\left(  i\right)  \right \}  _{\alpha
}=\left \{  b_{n}\left(  i\right)  \right \}  _{\alpha}\in%
{\displaystyle \prod_{i\in I}^{\alpha}}
\left(  \mathcal{B}_{i},\tau_{i}\right)  .
\]
It follows that $\pi:\mathcal{A}\mathbf{\rightarrow}%
{\displaystyle \prod_{i\in I}^{\alpha}}
\left(  \mathcal{B}_{i},\tau_{i}\right)  $. Since $\mathcal{A}$ is matricially
tracially stable, $\pi$ is approximately liftable.
\end{proof}

\bigskip

Recall that a $C^{*}$-algebra has \textit{real rank zero} (RR0) if each its
self-adjoint element can be approximated by self-adjoint elements with finite spectrum.

\begin{theorem}
\label{commutative}  Suppose every algebra in the class $\mathcal{C}$ is
$RR0$. Then every separable unital commutative C*-algebra is $\mathcal{C}%
$-tracially stable.
\end{theorem}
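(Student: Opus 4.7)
The plan is to use the characterization of approximate liftability in Lemma \ref{LimitOfLiftable}, reduce the lifting of $\pi:C(X)\to M:=\prod_{i\in I}^{\alpha}(\mathcal{A}_i,\rho_i)$ to producing, inside each $\mathcal{A}_i$, an honest finite orthogonal partition of unity whose associated simple-function map approximates $\pi$, and rely on real rank zero to round a $2$-norm approximate partition of unity into an exact one.

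Fix a dense sequence $\{b_k\}$ in $\mathcal{A}=C(X)$. By Lemma \ref{LimitOfLiftable}, it is enough, given $\varepsilon>0$ and $N\in\mathbb{N}$, to find $E\in\alpha$ and unital $\ast$-homomorphisms $\pi_i:C(X)\to\mathcal{A}_i$ for $i\in E$ with $\|\pi_i(b_k)-b_k(i)\|_{2,\rho_i}<\varepsilon$ for $1\le k\le N$, where $\pi(b_k)=\{b_k(i)\}_\alpha$. Let $\nu$ be the probability measure on $X$ corresponding to the state $\rho_\alpha\circ\pi$ on $C(X)$; since $M$ is a finite von Neumann algebra, $\pi$ extends to a normal unital $\ast$-homomorphism $\bar\pi:L^\infty(X,\nu)\to M$. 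Choose a Borel partition $X=\bigsqcup_{\ell=1}^L B_\ell$ into nonempty sets of diameter at most $\eta$ with points $x_\ell\in B_\ell$, taking $\eta$ small enough (by uniform continuity) that $\|b_k-\tilde b_k\|_{L^2(\nu)}<\varepsilon/3$ for $k\le N$, where $\tilde b_k:=\sum_\ell b_k(x_\ell)\chi_{B_\ell}$. Set $P_\ell:=\bar\pi(\chi_{B_\ell})$: these are orthogonal projections in $M$ with $\sum_\ell P_\ell=1$ and $\|\pi(b_k)-\sum_\ell b_k(x_\ell)P_\ell\|_{2,\rho_\alpha}<\varepsilon/3$.

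Write $P_\ell=\{e_\ell(i)\}_\alpha$ with $e_\ell(i)\in\mathcal{A}_i$ self-adjoint contractions; along $\alpha$, the $2$-norm defects $\|e_\ell(i)^2-e_\ell(i)\|_{2,\rho_i}$, $\|e_\ell(i)e_{\ell'}(i)\|_{2,\rho_i}$ ($\ell\ne\ell'$), and $\|1-\sum_\ell e_\ell(i)\|_{2,\rho_i}$ all tend to $0$. The key ingredient is a \emph{rounding lemma}: for every $L$ and every $\varepsilon'>0$ there is $\delta>0$ such that, in any unital RR0 C*-algebra $\mathcal{B}$ with tracial state $\rho$, self-adjoint contractions $e_1,\ldots,e_L\in\mathcal{B}$ with the three defects above bounded by $\delta$ admit orthogonal projections $p_1,\ldots,p_L\in\mathcal{B}$ with $\sum_\ell p_\ell=1$ and $\|e_\ell-p_\ell\|_{2,\rho}<\varepsilon'$. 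Granted this lemma, we apply it inside $\mathcal{A}_i$ for $i$ in a suitable $E\in\alpha$ to obtain $p_\ell(i)$, and define $\pi_i(f):=\sum_\ell f(x_\ell)p_\ell(i)$; this is a unital $\ast$-homomorphism $C(X)\to\mathcal{A}_i$ that factors through $\mathbb{C}^L$, and a triangle inequality via $\sum_\ell b_k(x_\ell)e_\ell(i)$ controls $\|\pi_i(b_k)-b_k(i)\|_{2,\rho_i}$ by $\varepsilon$.

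The rounding lemma I would prove by induction on $\ell$ through RR0 corners. In the one-projection step, given a self-adjoint contraction $e\in\mathcal{B}$ with $\|e^2-e\|_{2,\rho}$ small, use RR0 to norm-approximate $e$ by a finite-spectrum element $\tilde e\in\mathcal{B}$ and set $p:=\chi_{(1/2,\infty)}(\tilde e)\in C^*(\tilde e)\subseteq\mathcal{B}$; the pointwise inequality $|\chi_{(1/2,\infty)}(x)-x|^2\le 4(x^2-x)^2$ on $[-1,1]$ yields $\|p-\tilde e\|_{2,\rho}\le 2\|\tilde e^2-\tilde e\|_{2,\rho}$, and absorbing the norm approximation of $e$ by $\tilde e$ gives $\|p-e\|_{2,\rho}\le 2\|e^2-e\|_{2,\rho}+o(1)$. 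Inductively, with orthogonal projections $p_1,\ldots,p_{k-1}$ already in hand and $Q:=1-\sum_{j<k}p_j$, the corner $Q\mathcal{B}Q$ is again RR0 and $e_k':=Qe_kQ$ is an approximate projection there in $2$-norm (because $\|(1-Q)e_k\|_{2,\rho}$ is small via $\|p_j-e_j\|_{2,\rho}$ and $\|e_je_k\|_{2,\rho}$), so the one-projection case produces $p_k\le Q$. Finally set $p_L:=1-\sum_{\ell<L}p_\ell$; the telescoping bound $\|p_L-e_L\|_{2,\rho}\le\|1-\sum_\ell e_\ell\|_{2,\rho}+\sum_{\ell<L}\|p_\ell-e_\ell\|_{2,\rho}$ is controlled.

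The main obstacle is the rounding lemma itself: a $2$-norm approximate projection is strictly weaker than a norm approximate projection, so the standard norm stability of projections does not apply, and the Borel spectral cutoff of a general self-adjoint element lives only in the enveloping von Neumann algebra rather than in $\mathcal{A}_i$. Real rank zero is exactly what bridges this gap, permitting reduction to a finite-spectrum norm approximant whose spectral cutoff lies in $\mathcal{A}_i$; the elementary spectral inequality then transfers the $2$-norm approximate-projection defect to a $2$-norm distance from an exact projection, and the inductive passage through corners (which are again RR0) is routine.
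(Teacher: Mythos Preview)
Your argument is correct, but the paper's proof is entirely different and much shorter: it simply cites a lifting theorem from the companion paper \cite{DonTanya1}, which states that if $\mathcal{M}$ is real rank zero and $\mathcal{M}/\mathcal{J}$ is an AW*-algebra, then every unital $\ast$-homomorphism from a separable commutative unital C*-algebra into $\mathcal{M}/\mathcal{J}$ lifts \emph{exactly} to $\mathcal{M}$. Since $\prod_i\mathcal{A}_i$ is RR0 whenever each $\mathcal{A}_i$ is, and the tracial ultraproduct is a von Neumann algebra, the result follows in one line. Your approach instead builds \emph{approximate} lifts by hand: extend $\pi$ normally to $L^\infty(X,\nu)$, approximate by simple functions, and use a rounding lemma (finite-spectrum approximation plus the spectral inequality $|\chi_{(1/2,\infty)}(x)-x|^2\le 4(x^2-x)^2$, iterated through RR0 corners) to convert $2$-norm approximate partitions of unity into exact ones. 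This is more laborious but self-contained, requiring no external reference, and it makes transparent exactly where RR0 is used; it is also essentially the mechanism the paper itself deploys later in the proof of Theorem~\ref{tensorproduct}, where orthogonal projections are lifted into RR0 algebras by invoking the present theorem as a black box. The paper's route buys brevity and a slightly stronger conclusion (exact rather than approximate lifting), while yours buys explicitness.
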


\begin{proof}
In \cite{DonTanya1} the authors proved that if $\mathcal{J}$ is a norm-closed
two-sided ideal of a real-rank zero C*-algebra $\mathcal{M}$ such that
$\mathcal{M}/\mathcal{J}$ is an AW*-algebra, then for any commutative
separable unital C*-algebra $\mathcal{A}$, any $\ast$-homomorphism
$\pi:\mathcal{A}\rightarrow \mathcal{M}/\mathcal{J}$, lifts to a unital $\ast
$-homomorphism $\rho:\mathcal{A}\rightarrow \mathcal{M}$. Since the direct
product of real-rank zero C*-algebras is a real-rank zero C*-algebra and a
tracial ultraproduct of $C^{*}$-algebras is a von Neumann algebra, the
statement follows.
\end{proof}

\medskip

\begin{proposition}
Suppose $\mathcal{C}$ is a class such that $\mathbb{C}\oplus \mathcal{B\in C}$
whenever $\mathcal{B}\in \mathcal{C}$. Every $\mathcal{C}$-tracially stable
separable unital $C^{\ast}$-algebra $\mathcal{A}$ with at least one
representation into a tracial ultraproduct of $C^{*}$-algebras in
$\mathcal{C}$ must have a one-dimensional unital representation.
\end{proposition}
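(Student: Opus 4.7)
The plan is to use the given representation to manufacture a \emph{second} unital $\ast$-homomorphism from $\mathcal{A}$ into a tracial ultraproduct whose fibers are of the form $\mathbb{C}\oplus\mathcal{A}_{i}$, and then read off a character from its approximate lift. So let $\pi:\mathcal{A}\rightarrow\prod_{i\in I}^{\alpha}(\mathcal{A}_{i},\rho_{i})$ be the given unital $\ast$-homomorphism with each $\mathcal{A}_{i}\in\mathcal{C}$.

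First I would equip $\tilde{\mathcal{A}}_{i}:=\mathbb{C}\oplus\mathcal{A}_{i}$ (which lies in $\mathcal{C}$ by hypothesis) with tracial states designed to make the $\mathbb{C}$-summand invisible along $\alpha$. Since $\alpha$ is nontrivial, choose a decreasing chain $I=E_{1}\supset E_{2}\supset\cdots$ in $\alpha$ with $\bigcap_{n}E_{n}=\varnothing$, and set $\epsilon_{i}=1/n$ whenever $i\in E_{n}\setminus E_{n+1}$, so that $\lim_{i\rightarrow\alpha}\epsilon_{i}=0$. Define $\tilde{\rho}_{i}(c,a):=\epsilon_{i}c+(1-\epsilon_{i})\rho_{i}(a)$. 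Since $\|(1_{\mathbb{C}},0)\|_{2,\tilde{\rho}_{i}}^{2}=\epsilon_{i}$, the central projection $\{(1_{\mathbb{C}},0)\}_{\alpha}$ equals $0$ in $\prod_{i\in I}^{\alpha}(\tilde{\mathcal{A}}_{i},\tilde{\rho}_{i})$.

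Next I would check that the coordinate-wise non-unital embedding $\mathcal{A}_{i}\ni a\mapsto(0,a)\in\tilde{\mathcal{A}}_{i}$ descends to a \emph{unital} $\ast$-homomorphism $\iota:\prod_{i\in I}^{\alpha}(\mathcal{A}_{i},\rho_{i})\rightarrow\prod_{i\in I}^{\alpha}(\tilde{\mathcal{A}}_{i},\tilde{\rho}_{i})$. It is well defined and $\ast$-multiplicative because $\|(0,a_{i})\|_{2,\tilde{\rho}_{i}}^{2}=(1-\epsilon_{i})\|a_{i}\|_{2,\rho_{i}}^{2}$, and it is unital precisely because the relation $\{(1_{\mathbb{C}},0)\}_{\alpha}=0$ forces $\{(0,1_{\mathcal{A}_{i}})\}_{\alpha}=\{(1_{\mathbb{C}},1_{\mathcal{A}_{i}})\}_{\alpha}$, which is the unit of the enlarged ultraproduct. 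Hence $\sigma:=\iota\circ\pi:\mathcal{A}\rightarrow\prod_{i\in I}^{\alpha}(\tilde{\mathcal{A}}_{i},\tilde{\rho}_{i})$ is a genuine unital $\ast$-homomorphism into a tracial ultraproduct of algebras from $\mathcal{C}$.

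Finally, $\mathcal{C}$-tracial stability applied to $\sigma$ yields some $E\in\alpha$ and unital $\ast$-homomorphisms $\sigma_{i}:\mathcal{A}\rightarrow\mathbb{C}\oplus\mathcal{A}_{i}$ for $i\in E$. Writing $\sigma_{i}=(\chi_{i},\eta_{i})$ according to the direct sum, unitality forces $\chi_{i}(1_{\mathcal{A}})=1$, so each $\chi_{i}:\mathcal{A}\rightarrow\mathbb{C}$ is a one-dimensional unital representation of $\mathcal{A}$, as required. The step that takes the most care is the construction of $\iota$ and the verification that it is unital in the ultraproduct, since the first-coordinate projection is nonzero at every finite stage and only disappears in the limit; the rest is bookkeeping about ultrafilters and the standard fact that a unital $\ast$-homomorphism into a direct sum splits coordinate-wise.
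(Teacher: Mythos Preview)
Your proof is correct and follows essentially the same approach as the paper: both construct the enlarged algebras $\mathbb{C}\oplus\mathcal{A}_{i}$ equipped with traces $\tilde{\rho}_{i}(\lambda,a)=\frac{1}{n}\lambda+(1-\frac{1}{n})\rho_{i}(a)$ for $i\in E_{n}\setminus E_{n+1}$, observe that the resulting tracial ultraproduct coincides with the original one, and then read off a character from an approximate lift. You have simply spelled out in detail what the paper leaves as ``the rest is easy.''
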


\begin{proof}
Suppose $\pi:\mathcal{A}\rightarrow%
{\displaystyle \prod_{i\in I}^{\alpha}}
\left(  \mathcal{B}_{i},\gamma_{i}\right)  $ is a unital $\ast$-homomorphism
where each $\mathcal{B}_{i}\in \mathcal{C}$ and $\alpha$ is a nontrivial
ultrafilter on $I$. Let $\mathcal{D}_{i}=\mathbb{C}\oplus \mathcal{B}_{i}$ for
each $i\in I$. Since $\alpha$ is nontrivial there is a decreasing sequence
$I=E_{1}\supseteq E_{2}\supseteq \cdots$ in $\alpha$ whose intersection is
$\varnothing$. For $i\in E_{n}\backslash E_{n+1}$ define a trace $\rho_{i}$ on
$\mathcal{D}_{i}$ by%
\[
\rho_{i}\left(  \lambda \oplus B\right)  =\frac{1}{n}\lambda+\left(  1-\frac
{1}{n}\right)  \gamma_{i}\left(  B\right)  .
\]
Then%
\[%
{\displaystyle \prod_{i\in \mathbb{N}}^{\alpha}}
\left(  \mathcal{D}_{i},\rho_{i}\right)  =%
{\displaystyle \prod_{i\in \mathbb{N}}^{\alpha}}
\left(  \mathcal{B}_{i},\gamma_{i}\right)  .
\]
The rest is easy.\bigskip
\end{proof}

\begin{theorem}
\label{tensorproduct} Suppose the class $\mathcal{C} \subseteq RR0$ is closed under taking direct sums and unital corners. If $\mathcal{B}$ is separable unital and $\mathcal{C}$-tracially
stable, and if $X$ is a compact metric space, then $\mathcal{B}\otimes
C\left(  X\right)  $ is $\mathcal{C}$-tracially stable.
\end{theorem}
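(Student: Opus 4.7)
\emph{Plan.} Fix a unital $*$-homomorphism $\pi : \mathcal{B} \otimes C(X) \to \mathcal{M} := \prod_{i\in I}^{\alpha}(\mathcal{A}_{i},\rho_{i})$ with each $\mathcal{A}_{i} \in \mathcal{C}$. By Lemma \ref{LimitOfLiftable} it suffices, for any finite lists of generators $b_{1},\ldots,b_{s} \in \mathcal{B}$ and $f_{1},\ldots,f_{t} \in C(X)$ and any $\varepsilon>0$, to produce unital $*$-homomorphisms $\psi_{i} : \mathcal{B}\otimes C(X) \to \mathcal{A}_{i}$ on a set in $\alpha$ that match $\pi$ in $2$-norm on these generators within a controlled multiple of $\varepsilon$. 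The strategy is: use a fine partition of $X$ to replace $\pi|_{1\otimes C(X)}$ by an approximation living in a finite Boolean algebra of projections in $\mathcal{M}$; lift these projections to orthogonal projections of $\mathcal{A}_{i}$ using real rank zero; invoke $\mathcal{C}$-tracial stability of $\mathcal{B}$ inside each corner; and glue using the universal property of the tensor product.

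\emph{Projections and lifting.} Pick a Borel partition $V_{1},\ldots,V_{m}$ of $X$ with each $V_{k}$ of diameter small enough that each $f_{j}$ varies by less than $\varepsilon$ on $V_{k}$, and fix $x_{k} \in V_{k}$. The commutative von Neumann subalgebra $\pi(1\otimes C(X))'' \subseteq \mathcal{M}$ contains orthogonal projections $P_{k}$ corresponding to $\chi_{V_{k}}$, with $\sum_{k} P_{k}=1$ and $\|\pi(1\otimes f_{j}) - \sum_{k} f_{j}(x_{k}) P_{k}\|_{2,\rho_{\alpha}} < \varepsilon$. Since $\prod_{i}\mathcal{A}_{i}$ has real rank zero, projections lift from the quotient $\mathcal{M}$, and by the standard inductive orthogonalization we may write $P_{k}=\{p_{k,i}\}_{\alpha}$ with $\{p_{k,i}\}_{k=1}^{m}$ pairwise orthogonal projections in $\mathcal{A}_{i}$ summing to $1_{\mathcal{A}_{i}}$. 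Because $\pi(b \otimes 1)$ commutes with $\pi(1\otimes C(X))''$, the formula $\pi_{k}(b) := P_{k}\pi(b\otimes 1)$ defines a unital $*$-homomorphism $\pi_{k}:\mathcal{B} \to P_{k}\mathcal{M} P_{k}$, and $P_{k}\mathcal{M} P_{k}$ is canonically the tracial ultraproduct $\prod^{\alpha}(p_{k,i}\mathcal{A}_{i}p_{k,i},\rho_{k,i})$ with $\rho_{k,i} = \rho_{i}/\rho_{i}(p_{k,i})$. Closure of $\mathcal{C}$ under unital corners puts each $p_{k,i}\mathcal{A}_{i}p_{k,i}$ in $\mathcal{C}$ (direct-sum closure lets us safely ignore indices where $p_{k,i}=0$), so $\mathcal{C}$-tracial stability of $\mathcal{B}$ provides, on some $E_{k}\in\alpha$, unital $*$-homomorphisms $\psi_{k,i}:\mathcal{B}\to p_{k,i}\mathcal{A}_{i}p_{k,i}$ with $\|\psi_{k,i}(b_{\ell})-\pi_{k}(b_{\ell})\|_{2,\rho_{k,i}}$ arbitrarily small.

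\emph{Gluing and closeness.} On $\bigcap_{k}E_{k}\in\alpha$, define $\psi_{i}(b\otimes f) := \sum_{k} f(x_{k})\psi_{k,i}(b)$. The map $b\mapsto \sum_{k}\psi_{k,i}(b)$ is a unital $*$-homomorphism of $\mathcal{B}$ into $\mathcal{A}_{i}$ (the orthogonal corners kill the cross terms), and $f\mapsto \sum_{k} f(x_{k})p_{k,i}$ is a unital $*$-homomorphism of $C(X)$ whose image sits in the relative center of $\bigoplus_{k}p_{k,i}\mathcal{A}_{i}p_{k,i}$ and so commutes with the first; nuclearity of $C(X)$ then promotes the pair to a unital $*$-homomorphism $\psi_{i}$ on $\mathcal{B}\otimes C(X)$. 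Because the errors $\{\psi_{k,i}(b_{\ell})\}_{\alpha}-\pi_{k}(b_{\ell})$ live in mutually orthogonal corners $P_{k}\mathcal{M} P_{k}$, the norms add by Pythagoras:
\[
\bigl\|\{\psi_{i}(b_{\ell}\otimes 1)\}_{\alpha} - \pi(b_{\ell}\otimes 1)\bigr\|_{2,\rho_{\alpha}}^{2} = \sum_{k}\bigl\|\{\psi_{k,i}(b_{\ell})\}_{\alpha} - \pi_{k}(b_{\ell})\bigr\|_{2,\rho_{\alpha}}^{2},
\]
and the right-hand side is $\sum_{k}\rho_{\alpha}(P_{k})\cdot(\text{small})^{2}$, hence as small as we please; combined with the partition estimate for each $\pi(1\otimes f_{j})$, this gives the desired approximation, and Lemma \ref{LimitOfLiftable} finishes the proof. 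The main technical difficulty is Step~2: realizing the abstract projections $P_{k}\in\mathcal{M}$ as honest pairwise orthogonal projections summing to $1$ in each $\mathcal{A}_{i}$, and correctly identifying the compressed ultraproduct $P_{k}\mathcal{M} P_{k}$ with $\prod^{\alpha}(p_{k,i}\mathcal{A}_{i}p_{k,i},\rho_{k,i})$ so that $\mathcal{C}$-tracial stability of $\mathcal{B}$ is applicable inside each corner.
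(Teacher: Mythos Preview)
Your proposal is correct and follows essentially the same strategy as the paper's proof: approximate the $C(X)$-part by a simple function on a finite partition, lift the resulting orthogonal projections to the $\mathcal{A}_i$, apply $\mathcal{C}$-tracial stability of $\mathcal{B}$ in each corner, and glue via commuting ranges. The only cosmetic differences are that the paper partitions the Gelfand spectrum $\Omega$ of $f(1\otimes C(X))$ rather than $X$ itself, and it lifts the orthogonal family of projections by invoking the already-established commutative case (Theorem~\ref{commutative}) instead of appealing directly to real rank zero plus inductive orthogonalization; both routes are equivalent here.
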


\begin{proof}
Suppose $\left \{  \left(  \mathcal{A}_{n},\rho_{n}\right)  \right \}  $ is a
sequence of $C^{*}$-algebras in the class $\mathcal{C}$, $\alpha$ is a
non-trivial ultrafilter on $\mathbb{N}$, and $f: \mathcal{B}\otimes C(X) \to%
{\displaystyle \prod_{n\in \mathbb{N}}^{\alpha}}
\left(  \mathcal{A}_{n},\rho_{n}\right)  $ is a unital $\ast$-homomorphism. By
Lemma \ref{LimitOfLiftable}, it will be enough for any $x_{1}, \ldots,
x_{k}\in \mathcal{B}\otimes C(X)$, $\epsilon>0$ to find unital $\ast$-homomorphisms
$\tilde f_{n}: \mathcal{B}\otimes C(X) \rightarrow{A_{n}}$, such that
\[
\| \{ \tilde f_{n}(x_{j})\}_{\alpha} - f(x_{j})\|_{2, \rho} < 3\epsilon,
\]
$j\le k$.

There are $b_{i}^{(j)}\in \mathcal{B}, \phi_{i}^{(j)}\in C(X), N_{1}, \ldots,
N_{k}$ such that
\begin{equation}
\label{1tensor}\|x_{j} - \sum_{i=1}^{N_{j}} b_{i}^{(j)}\otimes \phi_{i}^{(j)}\|
\le \epsilon,
\end{equation}
$j\le k$. Let
\[
M = \max_{1\le i\le N_{j}, 1\le j\le k} \|f(b_{i}^{(j)}\otimes1)\|, \; N =
\max_{j\le k} N_{j}.
\]
Since $f(1\otimes C(X))$ is commutative, it is isomorphic to some $C(\Omega)$.
We can choose a disjoint collection $\left \{  E_{1},\ldots,E_{m}\right \}  $ of
Borel subsets of $\Omega$ whose union is $\Omega$ and $\omega_{1}\in
E_{1},\ldots,\omega_{m}\in E_{m}$ such that
\begin{equation}
\label{2tensor}\|f(1\otimes \phi_{i}^{(j)}) -\sum_{l=1}^{m}f(1\otimes \phi
_{i}^{(j)})(w_{l}) \chi_{E_{l}} \| _{2,\rho}<\frac{\varepsilon}{NM},
\end{equation}
for $j\le k$ and $i\le N_{j}$. Since pairwisely orthogonal projections generate a commutative $C^*$-algebra, by Theorem \ref{commutative} for each $\chi_{E_{l}}$, $l\le m$, we can find
projections $P_{l, n}\in \mathcal{A}_{n}$ such that $\chi_{E_{l}} = \{P_{l,
n}\}_{\alpha}$ and $P_{1, n}, \ldots, P_{m, n}$ are pairwisely orthogonal, for each $n$. Since the subalgebra $f(1\otimes C(X))$ is central in the
algebra $f(\mathcal{B }\otimes C(X))$, each $\chi_{E_{l}}$ commutes with
$f(\mathcal{B }\otimes C(X))$. Hence
\[
f\left(  \mathcal{B}\otimes C(X)\right)  \subseteq%
{\displaystyle \prod_{n\in \mathbb{N}}^{\alpha}}
\left(  \sum_{i=1}^{m} P_{i, n}\mathcal{A}_{n}P_{i, n}, \; \rho_{n}\right) = \bigoplus_{i=1}^m \left(%
{\displaystyle \prod_{n\in \mathbb{N}}^{\alpha}}
\left(  P_{i, n}\mathcal{A}_{n}P_{i, n}, \; \rho_{n}\right)\right) .
\]
Since $\mathcal{B}$ is $\mathcal{C}$-tracially stable, we can find unital $\ast
$-homomorphisms $\psi_{n, i}: \mathcal{B }\to  P_{i, n}%
\mathcal{A}_{n}P_{i, n}$, such that
\begin{equation}
\label{3tensor}\{ \psi_{n, i}(b)\}_{\alpha} = p_i\circ f(b\otimes1),
\end{equation}
for any $b\in \mathcal{B}$. Here $p_i$ is the projection onto the i-th summand in $\bigoplus_{i=1}^m \left(%
{\displaystyle \prod_{n\in \mathbb{N}}^{\alpha}}
\left(  P_{i, n}\mathcal{A}_{n}P_{i, n}, \; \rho_{n}\right)\right).$
Hence for 
$\psi_{n} = \oplus_{i=1}^m \psi_{n, i}: \mathcal{B }\to \sum_{i=1}^{m} P_{i, n}%
\mathcal{A}_{n}P_{i, n}$, we have
\begin{equation}
\label{3tensor}\{ \psi_{n}(b)\}_{\alpha} = f(b\otimes1),
\end{equation}
for any $b\in \mathcal{B}$. Define a $\ast$-homomorphism $\delta_{n}: C(X)
\to \sum_{i=1}^{m} P_{i, n}\mathcal{A}_{n}P_{i, n}$ by
\begin{equation}
\label{4tensor}\delta_{n}(\phi) = \sum_{i=1}^{m} f(1\otimes \phi)(w_{i})P_{i,
n}.
\end{equation}
Define $\tilde f_{n}: \mathcal{B }\otimes C(X) \to \mathcal{A}_{n}$ by
\[
\tilde f_{n}(b\otimes \phi) = \psi_{n}(b)\delta_{n}(\phi).
\]
Since $\delta_{n}(C(X))$ and $\psi_{n}(\mathcal{B})$ commute, $\tilde f_{n}$
is a $\ast$-homomorphism. By (\ref{1tensor}), (\ref{2tensor}), (\ref{3tensor}%
), (\ref{4tensor}), for each $j\le k$ we have
\begin{multline}
\left \|  \{ \tilde f_{n}(x_{j})\}_{\alpha} - f(x_{j})\right \|  _{2, \rho} \le2
\epsilon+ \left \|  \left \{  \tilde f_{n}\left(  \sum_{i=1}^{N_{j}} b_{i}%
^{(j)}\otimes \phi_{i}^{(j)}\right)  \right \}  _{\alpha} - f\left(  \sum
_{i=1}^{N_{j}}b_{i}^{(j)}\otimes \phi_{i}^{(j)}\right)  \right \|  _{2, \rho}\\
= 2\epsilon+ \left \|  \sum_{i=1}^{N_{j}} \{ \psi_{n}(b_{i}^{(j)})\}_{\alpha}\{
\delta_{n}(\phi_{i}^{(j)}\}_{\alpha} - \sum_{i=1}^{N_{j}} f(b_{i}^{(j)}%
\otimes1) f(1\otimes \phi_{i}^{(j)})\right \|  _{2, \rho}\\
= 2\epsilon+ \left \|  \sum_{i=1}^{N_{j}} f(b_{i}^{(j)}\otimes1) \left(
\sum_{l=1}^{m} f(1\otimes \phi_{i}^{(j)})(w_{l})\chi_{E_{l}} - f(1\otimes
\phi_{i}^{(j)})\right)  \right \|  _{2, \rho}\\
\le2\epsilon+ N_{j}M\frac{\epsilon}{NM}\le3 \epsilon.
\end{multline}

\end{proof}

\begin{proposition}
Suppose the class $\mathcal{C} \subseteq RR0$ is closed under taking finite direct sums. Then the class of
$\mathcal{C}$-tracially stable C*-algebras is closed under taking finite direct sums.
\end{proposition}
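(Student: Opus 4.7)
The plan is to induct on the number of summands and reduce to showing that if $\mathcal{A}_{1}$ and $\mathcal{A}_{2}$ are $\mathcal{C}$-tracially stable, then so is $\mathcal{A}:=\mathcal{A}_{1}\oplus\mathcal{A}_{2}$. By Lemma~\ref{OnlyOneUltrafilter}, it suffices to approximately lift an arbitrary unital $\ast$-homomorphism $\pi\colon \mathcal{A}\to \prod_{n\in\mathbb{N}}^{\alpha}(\mathcal{B}_{n},\rho_{n})$ with $\mathcal{B}_{n}\in\mathcal{C}$. Let $e_{1},e_{2}\in\mathcal{A}$ be the central units of the two summands and set $p_{j}:=\pi(e_{j})$, so that $p_{1},p_{2}$ are orthogonal projections summing to $1$ in the ultraproduct. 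If some $\rho_{\alpha}(p_{j})=0$, then $p_{j}=0$ in $2$-norm, so $\pi$ factors through $\mathcal{A}_{3-j}$ and the conclusion is immediate from $\mathcal{C}$-tracial stability of $\mathcal{A}_{3-j}$; hence I may assume $\rho_{\alpha}(p_{1}),\rho_{\alpha}(p_{2})>0$.

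Next, I would exactly lift the central idempotent structure. Applying Theorem~\ref{commutative} to the commutative unital $C^{\ast}$-algebra $\mathbb{C}\oplus\mathbb{C}$ (using $\mathcal{C}\subseteq RR0$) composed with the given embedding $\mathbb{C}\oplus\mathbb{C}\hookrightarrow\mathcal{A}\xrightarrow{\pi}\prod^{\alpha}(\mathcal{B}_{n},\rho_{n})$ produces orthogonal projections $P_{n}^{(1)},P_{n}^{(2)}\in\mathcal{B}_{n}$ with $P_{n}^{(1)}+P_{n}^{(2)}=1_{\mathcal{B}_{n}}$ and $\{P_{n}^{(j)}\}_{\alpha}=p_{j}$. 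Let $\iota_{j}\colon\mathcal{A}_{j}\hookrightarrow\mathcal{A}$ denote the canonical inclusion; then $\pi_{j}:=\pi\circ\iota_{j}$ is a unital $\ast$-homomorphism from $\mathcal{A}_{j}$ into the corner $p_{j}\prod^{\alpha}(\mathcal{B}_{n},\rho_{n})p_{j}$, and after rescaling the trace by $\rho_{n}(P_{n}^{(j)})^{-1}$ this corner identifies canonically with the tracial ultraproduct $\prod^{\alpha}(P_{n}^{(j)}\mathcal{B}_{n}P_{n}^{(j)}, \tilde\rho_{n}^{(j)})$ of the corner algebras.

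The third step is to invoke $\mathcal{C}$-tracial stability of $\mathcal{A}_{j}$ to approximately lift each $\pi_{j}$ to unital $\ast$-homomorphisms $\pi_{j,n}\colon\mathcal{A}_{j}\to P_{n}^{(j)}\mathcal{B}_{n}P_{n}^{(j)}$. Piecing these together by $\pi_{n}(a_{1},a_{2}):=\pi_{1,n}(a_{1})+\pi_{2,n}(a_{2})$ produces a candidate lift $\pi_{n}\colon\mathcal{A}\to\mathcal{B}_{n}$: multiplicativity follows from the orthogonality of the corners $P_{n}^{(1)}\mathcal{B}_{n}P_{n}^{(1)}$ and $P_{n}^{(2)}\mathcal{B}_{n}P_{n}^{(2)}$, and unitality from $P_{n}^{(1)}+P_{n}^{(2)}=1$. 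A final application of Lemma~\ref{LimitOfLiftable} then certifies that $\{\pi_{n}\}$ approximately lifts $\pi$.

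The hard part is the third step: a priori the corner algebras $P_{n}^{(j)}\mathcal{B}_{n}P_{n}^{(j)}$ need not themselves lie in $\mathcal{C}$, so one cannot invoke $\mathcal{C}$-tracial stability of $\mathcal{A}_{j}$ against the corner ultraproduct directly. This is exactly where the closure of $\mathcal{C}$ under finite direct sums intervenes: one handles the two corner lifting problems jointly, employing a suitable direct sum inside $\mathcal{C}$ to absorb the corner data, and then reads off the corner-valued liftings $\pi_{j,n}$ from the resulting $\mathcal{B}_{n}$-valued homomorphisms. Executing this transfer while preserving the required trace-norm approximations is the technical heart of the argument.
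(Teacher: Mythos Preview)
Your outline matches the paper's proof step for step: lift the two central projections via Theorem~\ref{commutative}, observe that each restriction $\pi_j$ lands in the corresponding corner ultraproduct $\prod^\alpha(P_n^{(j)}\mathcal{B}_nP_n^{(j)},\rho_n)$, lift each $\pi_j$ using $\mathcal{C}$-tracial stability of $\mathcal{A}_j$, and glue the two lifts together. The paper does exactly this, in a few lines.

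The point you single out as the ``hard part'' --- that the corners $P_n^{(j)}\mathcal{B}_nP_n^{(j)}$ need not lie in $\mathcal{C}$ --- is not addressed in the paper at all. The paper simply applies $\mathcal{C}$-tracial stability of $\mathcal{A}$ and $\mathcal{B}$ directly against the corner ultraproducts, and the hypothesis that $\mathcal{C}$ be closed under finite direct sums is never invoked anywhere in the argument. Your proposed workaround, ``handling the two corner lifting problems jointly, employing a suitable direct sum inside $\mathcal{C}$,'' is left unspecified, and it is not clear how to make it work: any such trick would at best produce unital $\ast$-homomorphisms into $\mathcal{B}_n$ (or a direct sum of copies of $\mathcal{B}_n$) whose ranges are only \emph{approximately} supported in the corners, not unital maps into the corners themselves, which is what the gluing step requires. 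So your concern is legitimate and you are being more careful than the paper here, but neither your proposal nor the paper's proof actually closes this gap; as written, the argument really seems to need that $\mathcal{C}$ be closed under unital corners, exactly as assumed in Theorem~\ref{tensorproduct}.
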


\begin{proof}
We will show it for direct sums with 2 summands, and the general case is
similar. Let $\mathcal{A}, \mathcal{B}$ be $\mathcal{C}$-tracially stable,
$\pi: \mathcal{A }\oplus \mathcal{B}\to%
{\displaystyle \prod_{n\in \mathbb{N}}^{\alpha}}
\left(  \mathcal{D}_{n},\rho_{n}\right)  $ a unital $\ast$-homomorphism and
all ${\mathcal{D}}_{n}\in \mathcal{C}.$ Let
\[
p = \pi(1_{\mathcal{A}}), \; q = \pi(1_{\mathcal{B}}).
\]
Then $p+q = 1$. By Theorem \ref{commutative} we can find projections $P_{n},
Q_{n} \in{\mathcal{D}}_{n}$ such that
\[
\{P_{n}\}_{\alpha} = p,\; \{Q_{n}\}_{\alpha} = q,\; P_{n}+Q_{n} =
1_{{\mathcal{D}}_{n}}.
\]
Then
\[
\pi(\mathcal{A}) \subset%
{\displaystyle \prod_{n\in \mathbb{N}}^{\alpha}}
\left(  P_{n} \mathcal{D}_{n}P_{n},\rho_{n}\right) , \; \pi(\mathcal{B})
\subset%
{\displaystyle \prod_{n\in \mathbb{N}}^{\alpha}}
\left(  Q_{n} \mathcal{D}_{n}Q_{n},\rho_{n}\right) .
\]
Since $\mathcal{A}$ and $\mathcal{B}$ are $\mathcal{C}$-tracially stable,
there are unital $\ast$-homomorphisms $\phi: \mathcal{A }\to \prod
P_{n}\mathcal{D}_{n}P_{n}$ and $\psi: \mathcal{B }\to \prod Q_{n}%
\mathcal{D}_{n}Q_{n}$ such that
\[
\{ \phi(a)_{n}\}_{\alpha} = \pi(a), \{ \psi(b)_{n}\}_{\alpha} = \pi(b),
\]
for all $a\in \mathcal{A}, b\in \mathcal{B}$. Then $\tilde \pi$, defined by
$\tilde \pi(a, b) = \phi(a) + \psi(b)$, is an approximate lift of $\pi$.
\end{proof}

\medskip The following proposition is obvious.

\begin{proposition}
\label{freeproducts} The class of $\mathcal{C}$-tracially stable C*-algebras
is closed under finite free products in the unital category for any
$\mathcal{C}$.
\end{proposition}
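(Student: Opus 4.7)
The plan is to exploit the universal property of the unital free product together with the tracial stability of the factors. Let me denote the unital free product by $\mathcal{A}*_{\mathbb{C}}\mathcal{B}$. First I would reduce to the ``single sequence'' setting using Lemma \ref{OnlyOneUltrafilter}: given any unital $\ast$-homomorphism
\[
\pi:\mathcal{A}*_{\mathbb{C}}\mathcal{B}\;\longrightarrow\;{\displaystyle \prod_{n\in\mathbb{N}}^{\alpha}}\left(\mathcal{D}_{n},\rho_{n}\right),
\]
with each $\mathcal{D}_n\in\mathcal{C}$ and $\alpha$ a nontrivial ultrafilter on $\mathbb{N}$, it suffices to produce an approximate lifting.

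Next I would restrict $\pi$ to the canonical copies of $\mathcal{A}$ and $\mathcal{B}$ inside the free product. Since both $\mathcal{A}$ and $\mathcal{B}$ are $\mathcal{C}$-tracially stable, the restrictions $\pi|_{\mathcal{A}}$ and $\pi|_{\mathcal{B}}$ are approximately liftable. So there exist $E_A,E_B\in\alpha$ and unital $\ast$-homomorphisms $\phi_n:\mathcal{A}\to\mathcal{D}_n$ for $n\in E_A$ and $\psi_n:\mathcal{B}\to\mathcal{D}_n$ for $n\in E_B$ with
\[
\pi|_{\mathcal{A}}(a)=\{\phi_n(a)\}_{\alpha},\qquad \pi|_{\mathcal{B}}(b)=\{\psi_n(b)\}_{\alpha}.
\]
Setting $E=E_A\cap E_B\in\alpha$, on $E$ both liftings are available.

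Now I would invoke the universal property of the unital free product: for each $n\in E$, the pair $(\phi_n,\psi_n)$ factors uniquely through a unital $\ast$-homomorphism $\tilde\pi_n:\mathcal{A}*_{\mathbb{C}}\mathcal{B}\to\mathcal{D}_n$. Define $\tilde\pi_n=0$ for $n\notin E$. The assignment $x\mapsto\{\tilde\pi_n(x)\}_{\alpha}$ is a well-defined unital $\ast$-homomorphism $\mathcal{A}*_{\mathbb{C}}\mathcal{B}\to\prod^{\alpha}(\mathcal{D}_n,\rho_n)$, since each $\tilde\pi_n$ is contractive so the sequence $\{\tilde\pi_n(x)\}$ is bounded.

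The final step is to observe that $\pi$ and $x\mapsto\{\tilde\pi_n(x)\}_{\alpha}$ are two unital $\ast$-homomorphisms from $\mathcal{A}*_{\mathbb{C}}\mathcal{B}$ into the ultraproduct which agree on the generating set $\mathcal{A}\cup\mathcal{B}$, hence agree everywhere. Thus $\pi$ is approximately liftable. There is really no obstacle here --- the work is all done by the universal property of $*_\mathbb{C}$ and the separate tracial stability of $\mathcal{A}$ and $\mathcal{B}$, and the argument extends verbatim to any finite number of summands by induction, which is exactly why the authors regard it as obvious.
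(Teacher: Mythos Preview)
Your argument is correct and is precisely the ``obvious'' proof the authors have in mind (the paper gives no proof beyond declaring the proposition obvious). The only superfluous step is the appeal to Lemma~\ref{OnlyOneUltrafilter}: the argument works verbatim for an arbitrary tracial ultraproduct over any index set, so no reduction is needed.
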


\noindent {\bf Remark.}
\textrm{The preceding proposition cannot be extended to countable free
products in the unital category. For example, $\mathcal{B}_{n}=\mathcal{M}%
_{n}\left(  \mathbb{C}\right)  \oplus \mathcal{M}_{n+1}\left(  \mathbb{C}%
\right)  $ is matricially tracially stable for each positive integer $n$ by
Lemma \ref{GCD} and Corollary \ref{GCR} in the next section. Let $\alpha$ be a
nontrivial ultrafilter containing the set $\{k! \;|\; k\in \mathbb{N}\}$. Then,
for each $n\in \mathbb{N}$, $\mathcal{B}_{n}$ embeds unitally into $%
{\displaystyle \prod_{n\in \mathbb{N}}^{\alpha}}
\left(  \mathcal{M}_{i},\tau_{i}\right) $ and hence there is a unital $\ast
$-homomorphism from the unital free product $\ast_{n\in \mathbb{N}}%
\mathcal{B}_{n}$ into $%
{\displaystyle \prod_{n\in \mathbb{N}}^{\alpha}}
\left(  \mathcal{M}_{i},\tau_{i}\right) $. However, $\ast_{n\in \mathbb{N}%
}\mathcal{B}_{n}$ has no finite-dimensional representations. Hence $\ast
_{n\in \mathbb{N}}\mathcal{B}_{n}$ is not matricially tracially stable.
However, if $\mathcal{A}_{n}$ is a separable tracially-$\mathcal{C}$-stable
$C^{*}$-algebra for each $n\in \mathbb{N}$, then the unitization of the free
product of the $\mathcal{A}_{n}$'s in the nonunital category is
tracially-$\mathcal{C}$-stable. }

\section{Matricial tracial stability}

Notation: $\tau_{n}$ usual trace on $\mathcal{M}_{n}\left(  \mathbb{C}\right)
$. The tracial ultraproduct is denoted
\[%
{\displaystyle \prod_{n\in \mathbb{N}}^{\alpha}}
\left(  \mathcal{M}_{n}\left(  \mathbb{C}\right)  ,\tau_{n}\right)  .
\]

\subsection{Embeddable traces}

\textbf{Definition }Suppose $\mathcal{A}$ is a unital separable C*-algebra and
$\tau$ is a tracial state on $\mathcal{A}$. We say $\tau$ is \emph{embeddable}
if there is a non-trivial ultrafilter $\alpha$ on $\mathbb{N}$ and a unital
$\ast$-homomorphism%
\[
\pi:\mathcal{A}\rightarrow%
{\displaystyle \prod_{n\in \mathbb{N}}^{\alpha}}
\left(  \mathcal{M}_{n}\left(  \mathbb{C}\right)  ,\tau_{n}\right)
\]
such that%
\[
\tau_{\alpha}\circ \pi=\tau.
\]

If Connes' embedding theorem holds, then every tracial state is embeddable.

A tracial state $\tau$ is \emph{finite-dimensional} if there is a
finite-dimensional C*-algebra $\mathcal{B}$ with a tracial state
$\tau_{\mathcal{B}}$ and a unital *-homomorphism
\[
\pi:\mathcal{A}\rightarrow \mathcal{B}%
\]
such that%
\[
\tau_{\mathcal{B}}\circ \pi=\tau.
\]
A tracial state $\tau$ is called \emph{matricial} if there is a positive
integer $n$ and a unital *-homomorphism%
\[
\pi:\mathcal{A}\rightarrow \mathcal{M}_{n}\left(  \mathbb{C}\right)
\]
such that%
\[
\tau=\tau_{n}\circ \pi.
\]

\bigskip

We say that a matricial tracial state is a \emph{factor matricial state} if
the $\pi$ above can be chosen to be surjective.

\bigskip

\begin{lemma}
\label{basic} A $C^{\ast}$-algebra $A=C^{\ast}\left(  D\right)  $ with a
tracial state $\tau$ admits a trace-preserving $\ast$-homomorphism into an ultraproduct of matrix algebras
if and only if, for every $\epsilon>0$, every finite tuple $(a_{1},...,a_{n})$
of elements in $D$, and every finite set $F$ of $\ast$-monomials, there is a
positive integer $k$ and a tuple $(A_{1},...,A_{n})$ of $k\times k$ matrices
such that

\begin{enumerate}
\item $\left \Vert A_{j}\right \Vert \leq \left \Vert a_{j}\right \Vert +1$ for
$1\leq j\leq n$,

\item $|\tau(m(a_{1},...,a_{n}))-\tau_{k}(m(A_{1},...,A_{n})|<\epsilon$ for
every $m\in F$.
\end{enumerate}
\end{lemma}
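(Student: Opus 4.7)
The plan is to prove both directions. The forward direction is essentially a direct unpacking of the ultraproduct definition: given a trace-preserving $\pi\colon A\to \prod_{k\in I}^{\alpha}(M_{n_k}(\mathbb{C}),\tau_{n_k})$, lift each $\pi(a_j)$ to a sequence $\{A_j(k)\}_\alpha$ with $\|A_j(k)\|\le \|a_j\|+1$ (achievable by spectral truncation on an $\alpha$-large set), and observe that
\[
\tau(m(a_1,\ldots,a_n))=\tau_\alpha\bigl(\pi(m(a_1,\ldots,a_n))\bigr)=\lim_{k\to\alpha}\tau_{n_k}\bigl(m(A_1(k),\ldots,A_n(k))\bigr)
\]
for each $*$-monomial $m$, so the set of $k$ for which (1) and all of the $|F|$-many estimates in (2) hold simultaneously belongs to $\alpha$ and is therefore nonempty.

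For the converse, I would form the directed set $\Lambda$ of triples $\lambda=(F_\lambda,S_\lambda,\epsilon_\lambda)$ with $F_\lambda\subset D$ finite, $S_\lambda$ a finite set of $*$-monomials, and $\epsilon_\lambda>0$, ordered by refinement. The hypothesis supplies a dimension $k_\lambda$ and matrices $\{A_d^\lambda : d\in F_\lambda\}\subset M_{k_\lambda}(\mathbb{C})$ (extended by $A_d^\lambda=0$ for $d\notin F_\lambda$) satisfying (1) and (2) at level $\lambda$. Fix a nontrivial ultrafilter $\alpha$ on $\Lambda$ refining all cofinal tails $\{\mu:\mu\ge\lambda\}$ (nontriviality is arranged by a countable decreasing family of tails with $\epsilon_{\lambda_n}\to 0$), and set $\pi_0(d):=\{A_d^\lambda\}_\alpha$ inside $\prod_{\lambda\in\Lambda}^{\alpha}(M_{k_\lambda}(\mathbb{C}),\tau_{k_\lambda})$. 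Condition (1) places these sequences in the $C^*$-direct product, while condition (2) together with cofinality gives
\[
\tau_\alpha\bigl(m(\pi_0(d_1),\ldots,\pi_0(d_n))\bigr)=\tau(m(d_1,\ldots,d_n))
\]
for every $*$-monomial $m$ and $d_i\in D$. Extending $\pi_0$ to the $*$-algebra $\mathcal{A}_0\subseteq A$ generated by $D$ is then automatic: if $p(d_1,\ldots,d_n)=0$ in $A$, the identity above applied to $p^*p$, together with faithfulness of $\tau_\alpha$ on the ultraproduct (a tracial von Neumann algebra by Section~1.1), forces $p(\pi_0(d))=0$.

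The step I expect to require the most care is promoting $\pi_0$ from $\mathcal{A}_0$ to $A$ in $C^*$-norm, because the bound $\|A_d^\lambda\|\le \|d\|+1$ in (1) does not immediately yield $C^*$-contractivity of $\pi_0$ on generators. I would handle this via the trace: for self-adjoint $b\in\mathcal{A}_0$ trace-preservation gives
\[
\|\pi_0(b)\|_{2p,\tau_\alpha}^{2p}=\tau_\alpha(\pi_0(b)^{2p})=\tau(b^{2p})=\|b\|_{2p,\tau}^{2p}
\]
for every $p\in\mathbb{N}$, and letting $p\to\infty$ (using that on a finite von Neumann algebra with faithful trace the operator norm is $\lim_p\|\cdot\|_{2p}$) yields $\|\pi_0(b)\|=\|\pi_\tau(b)\|\le \|b\|_A$; the $C^*$-identity extends this to arbitrary $b\in\mathcal{A}_0$. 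Hence $\pi_0$ is contractive on $\mathcal{A}_0$, which is norm-dense in $A$, so it extends by continuity to a $*$-homomorphism $\pi\colon A\to \prod_{\lambda\in\Lambda}^{\alpha}(M_{k_\lambda}(\mathbb{C}),\tau_{k_\lambda})$, trace-preserving by continuity of both traces in $\|\cdot\|_2$.
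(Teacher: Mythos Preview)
Your proof is correct and follows essentially the same route as the paper: the converse direction uses the same directed set of triples (the paper indexes by $(\varepsilon_\lambda, N_\lambda, E_\lambda)$ with $N_\lambda$ a degree bound rather than a finite set of monomials, but this is cosmetic), the same ultrafilter refining the cofinal tails, and the same extension from $D$ to the dense $*$-subalgebra $\mathcal{A}_0$ via faithfulness of $\tau_\alpha$. The one place you add content is the $L^{2p}$-norm argument for contractivity of $\pi_0$ on $\mathcal{A}_0$; the paper simply asserts the extension ``by continuity'' without explaining why the bound $\lVert A_d^\lambda\rVert\le\lVert d\rVert+1$ is enough, so your extra care here is well placed.
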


\begin{proof}
The "only if" part is obvious. For the other direction, let $\Lambda$ be the
set of all triples $\lambda=\left(  \varepsilon_{\lambda},N_{\lambda
},E_{\lambda}\right)  $ with $\varepsilon_{\lambda}>0,$ $E_{\lambda}$ a finite
subset of $D$, $N_{\lambda}$ a positive integer, partially ordered by$\left(
\geq,\leq,\subseteq \right)  .$ By the hypothesis, for each $\lambda \in \Lambda$
there is a positive integer $k_{\lambda}$ and there is a function $f_{\lambda
}:D\rightarrow \mathcal{M}_{k_{\lambda}}$ with $f_{\lambda}\left(  1\right)
=1$ and such that,

\begin{enumerate}
\item[i.] $\left \Vert f_{\lambda}\left(  a\right)  \right \Vert \leq \left \Vert
a\right \Vert +1$ for every $a\in A,$  

\item[ii.] For all monomials $m$ with degree at most $N_{\lambda}$, and all
$a_{1},\ldots,a_{n}\in E_{\lambda}$,%
\[
|\tau(m(a_{1},...,a_{n}))-\tau_{k}(m(f_{\lambda}(a_{1}),..., f_{\lambda}(a_{n}))|<\varepsilon_{\lambda}.
\]
(We can define $f\left(  a\right)  =0$ on $A\backslash E_{\lambda}$.)
\end{enumerate}

Define $F:D\rightarrow%
{\displaystyle \prod_{\lambda \in \Lambda}}
\left(  \mathcal{M}_{k_{\lambda}},\tau_{k_{\lambda}}\right)  $ by $F\left(
a\right)  \left(  \lambda \right)  =f_{\lambda}\left(  a\right)  .$ Let
$\alpha$ be an ultrafilter on $\Lambda$ containing $\left \{  \lambda
:\lambda \geq \lambda_{0}\right \}  $ for each $\lambda_{0}\in \Lambda$. Define
\[
\rho:A\rightarrow%
{\displaystyle \prod_{\lambda \in \Lambda}^{\alpha}}
\left(  \mathcal{M}_{k_{\lambda}},\tau_{k_{\lambda}}\right)
\]
by $\rho \left(  a\right)  =\left[  F\left(  a\right)  \right]  _{\alpha}$. Let
$A_{0}$ denote the unital $\ast$-algebra generated by $D$. It follows from the
definition of the $f_{\lambda}$'s that $\rho$ extends to a unital $\ast
$-homomorphism $\pi_{0}:A_{0}\rightarrow%
{\displaystyle \prod_{\lambda \in \Lambda}^{\alpha}}
\left(  \mathcal{M}_{k_{\lambda}},\tau_{k_{\lambda}}\right)  $ with
$\tau_{\alpha}\circ \pi_{0}=\tau$. We can now uniquely extend, by continuity,
$\pi_{0}$ to a unital $\ast$-homomorphism on $A$ such that $\tau=\tau_{\alpha
}\circ \pi$.
\end{proof}

\begin{corollary}\label{AnyUltrafilter}
 If a separable $C^{\ast}$-algebra with a tracial state
admits a trace-preserving $\ast$-homomorphism into $\prod_{n\in \mathbb{N}}^{\alpha}(M_{n},\tau_{n})$, for
some non-trivial ultrafilter $\alpha$, then it admits a trace-preserving $\ast$-homomorphism into
$\prod_{n\in \mathbb{N}}^{\beta}(M_{n},\tau_{n})$, for any non-trivial
ultrafilter $\beta$.
\end{corollary}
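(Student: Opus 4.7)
The plan is to use Lemma \ref{basic} to extract the approximation condition from the hypothesis, and then rebuild a trace-preserving embedding for the ultrafilter $\beta$ via a block-diagonal embedding $M_k \hookrightarrow M_n$ which is trace-preserving up to an error of order $k/n$.

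By the ``only if'' direction of Lemma \ref{basic}, the hypothesis gives the approximation condition: for every $\varepsilon>0$, every finite tuple $(a_{i_1},\ldots,a_{i_s})$ from a countable dense subset $\{a_j\}$ of $\mathcal A$, and every finite set $F$ of $\ast$-monomials, there exist $k$ and $A_1,\ldots,A_s\in M_k$ with $\|A_j\|\le \|a_{i_j}\|+1$ and $|\tau(m(a))-\tau_k(m(A))|<\varepsilon$ for all $m\in F$. The first key step is to promote this into an ``abundance'' statement: the approximation can in fact be realized in $M_n$ for \emph{every} sufficiently large $n$. Indeed, given $k_0$ and $A_1,\ldots,A_s\in M_{k_0}$ approximating to within $\varepsilon/2$, write $n=qk_0+r$ with $0\le r<k_0$ for $n\ge k_0$, and set $C_j=\mathrm{diag}(A_j,A_j,\ldots,A_j,0_r)\in M_n$ ($q$ copies). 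Then $\tau_n(m(C))=(qk_0/n)\tau_{k_0}(m(A))$, so the extra trace error on $F$ is bounded by $(k_0/n)M$ for a constant $M$ depending only on the fixed data, which is $<\varepsilon/2$ once $n$ is large.

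A diagonal argument now produces a single sequence. For each $n$, let $f(n)$ be the largest integer $k\le n$ for which the approximation with tolerance $1/k$, tuple $(a_1,\ldots,a_k)$, and monomials of degree $\le k$ can be realized in $M_n$; fix such matrices $C_1^{(n)},\ldots,C_{f(n)}^{(n)}\in M_n$ with $\|C_j^{(n)}\|\le \|a_j\|+1$ and set $C_j^{(n)}=0$ for $j>f(n)$. By the previous step $f(n)\to\infty$, so for any fixed $\ast$-monomial $m$ in variables $x_{j_1},\ldots,x_{j_s}$ and any $\delta>0$ the set of $n$ for which $\tau_n(m(C^{(n)}))$ is within $\delta$ of $\tau(m(a))$ is cofinite, hence lies in $\beta$ (since $\beta$ is non-principal). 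Define $\pi:\mathcal A\to \prod_{n\in\mathbb N}^{\beta}(M_n,\tau_n)$ by $\pi(a_j)=\{C_j^{(n)}\}_\beta$ and $\pi(1)=1$; by the preceding observation, $\lim_{n\to\beta}\tau_n(m(C^{(n)}))=\tau(m(a))$ for every $\ast$-monomial $m$.

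This determines $\pi$ unambiguously as a unital $\ast$-homomorphism on the $\ast$-polynomial algebra generated by the $a_j$'s and $1$, and it is isometric in $\|\cdot\|_2$, hence trace-preserving on that dense subalgebra. A $\ast$-homomorphism from a $\ast$-subalgebra of a $C^\ast$-algebra into a $C^\ast$-algebra is automatically operator-norm contractive, so $\pi$ extends uniquely by continuity to a unital, trace-preserving $\ast$-homomorphism on $\mathcal A$. The main technical subtlety is that the block-diagonal map $A\mapsto \mathrm{diag}(A,\ldots,A,0_r)$ is not unital---its image sends $1_{M_{k_0}}$ to a projection of trace $1-r/n$---but this contributes only a vanishing $O(k_0/n)$ correction to each monomial trace, which is absorbed into the $\varepsilon/2$ estimate above.
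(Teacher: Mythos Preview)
Your approach is the same as the paper's: pull good matrix approximants out of the $\alpha$-embedding, pad them block-diagonally with a small zero block to realize approximants in every sufficiently large $M_n$, and assemble these into a single sequence that works along any free ultrafilter $\beta$. The paper organizes the diagonal argument slightly differently but the substance is identical.

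One step needs correction. The assertion ``a $\ast$-homomorphism from a $\ast$-subalgebra of a $C^\ast$-algebra into a $C^\ast$-algebra is automatically operator-norm contractive'' is false when the subalgebra is not closed: send the polynomials in $C[0,1]$ to $C[0,2]$ by $p\mapsto p$. What is true here, and suffices, is that your $\pi$ is already trace-preserving on the dense $\ast$-algebra and $\tau_\beta$ is faithful on the ultraproduct (a finite von Neumann algebra); hence for self-adjoint $x$ in the polynomial algebra,
\[
\tau_\beta\bigl(\pi(x)^{2m}\bigr)=\tau(x^{2m})\le \|x\|^{2m},
\]
and since $\|y\|=\lim_{m}\tau_\beta(y^{2m})^{1/2m}$ for self-adjoint $y$ in such an algebra, one gets $\|\pi(x)\|\le\|x\|$, which yields the contractive extension you need. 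The paper also sweeps this under the phrase ``extend by continuity,'' so the gap is not unique to your write-up, but the justification you offered should be replaced by this trace argument.
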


\begin{proof}
Suppose $A=C^{\ast}\left(  a_{1},a_{2},\ldots \right)  $ with a tracial state
$\tau$, and suppose $\rho:A\rightarrow \prod_{n\in \mathbb{N}}^{\alpha}%
(M_{n},\tau_{n})$ is a unital $\ast$-homomorphism such that $\tau_{\alpha
}\circ \rho=\tau$. We can assume $\left \Vert a_{n}\right \Vert =1$ for all $n$,
and we can let $\rho \left(  a_{n}\right)  =\left \{  b_{n,\kappa}\right \}
_{\alpha}$ with $\left \Vert b_{n,\kappa}\right \Vert \leq \left \Vert
a_{n}\right \Vert =1$ for all $k\geq1$. For each positive integer $s$ there is
a positive $k_{s}$ such that, for every $\ast$-monomial $m$ with degree at
most $s,$
\[
\left \vert \tau \left(  m\left(  a_{1},\ldots,a_{s}\right)  \right)
-\tau_{k_{s}}\left(  m\left(  b_{1,k_{s}},\ldots,b_{s,k_{s}}\right)  \right)
\right \vert <\frac{1}{2s}.
\]
Suppose $t>2sk_{s}$. Then by dividing $t$ by $k_{s}$ we get $t=k_{s}q+r$ with
$0\leq r<k_{s}$. We define $c_{j}=b_{j,k_{s}}^{\left(  q\right)  }\oplus0_{r}$
(where $b^{\left(  q\right)  }$ is a direct sum of $q$ copies of $b$ and
$0_{r}$ is an $r\times r$ zero matrix. A simple computation gives, for any
monomial $m$ of degree at most $s$ that%
\[
\tau_{t}\left(  m\left(  c_{1},\ldots,c_{s}\right)  \right)  =\tau_{k_{s}%
}\left(  m\left(  b_{1,k_{s}},\ldots,b_{s,k_{s}}\right)  \right)  \left(
1-\frac{r}{t}\right)  .
\]
Since $r/t<1/2s$ and $\left \vert \tau_{k_{s}}\left(  m\left(  b_{1,k_{s}%
},\ldots,b_{s,k_{s}}\right)  \right)  \right \vert \leq1$, we see that%
\[
\left \vert \tau \left(  m\left(  a_{1},\ldots,a_{s}\right)  \right)  -\tau
_{t}\left(  m\left(  c_{1},\ldots,c_{s}\right)  \right)  \right \vert <\frac
{1}{s}.
\]
Hence we get representatives like $\left(  c_{1},\ldots,c_{s}\right)  $ for
all $t>2sk_{s}$. From this we see that there is a function $F:\left \{
a_{1},a_{2},\ldots \right \}  \rightarrow$ $\prod_{n\in \mathbb{N}}(M_{n}%
,\tau_{n})$ so that if $f\left(  a_{n}\right)  =\left(  d_{n,1},d_{n,2}%
,\ldots \right)  $, we have%
\[
\lim_{n\rightarrow \infty}\tau_{n}\left(  m\left(  d_{n,1},d_{n,2}%
,\ldots \right)  \right)  =\tau \left(  m\left(  a_{1},a_{2},\ldots \right)
\right)
\]
for any $\ast$-monomial $m$. Hence, for any free ultrafilter $\beta$,
\[
\lim_{n\rightarrow \beta}\tau_{n}\left(  m\left(  d_{n,1},d_{n,2}%
,\ldots \right)  \right)  =\tau \left(  m\left(  a_{1},a_{2},\ldots \right)
\right)  .
\]
Thus the map $\pi_{0}:\left \{  a_{1},a_{2},\ldots \right \}  \rightarrow$
$\prod_{n\in \mathbb{N}}^{\beta}(M_{n},\tau_{n})$ defined by%
\[
\pi_0 \left(  a_{k}\right)  =F\left(  a_{k}\right)  _{\beta}%
\]
extends to a unital $\ast$-homomorphism $\pi:A\rightarrow$ $\prod
_{n\in \mathbb{N}}^{\beta}(M_{n},\tau_{n})$ such that $\tau_{\beta}\circ
\pi=\tau$.
\end{proof}

It follows easily from Lemma \ref{basic} that the set of embeddable tracial
states on $\mathcal{A}$ is $\ast$-weak-compact. It is not hard to show that it
is also convex, and contains the finite-dimensional tracial states. The set of
finite-dimensional tracial states is the convex hull of the set of factor
matricial states. Moreover, the set of finite-dimensional tracial states is
contained in the weak*-closure of the set of matricial tracial states.

\subsection{Matricial tracial stability -- necessary conditions}

Let $\mathcal{A}$ be a unital $C^{*}$-algebra and let $\mathcal{J}_{et}\left(
\mathcal{A}\right)  $ be the largest ideal of $\mathcal{A}$ that is
annihilated by every  embeddable trace on $\mathcal{A}$. If $\mathcal{A}$ has
no embeddable traces, then $\mathcal{A}/\mathcal{J}_{et}=\left \{  0\right \}
.$ More generally, the embeddable tracial states of $\mathcal{A}%
/\mathcal{J}_{et}$ separate the points of $\mathcal{A}/\mathcal{J}_{et}$, and
if $\mathcal{A}$ is separable, $\mathcal{A}/\mathcal{J}_{et}$ always has a
faithful tracial embeddable state. Clearly, $\mathcal{A}$ is matricially
tracially stable iff $\mathcal{A}/\mathcal{J}_{et}$ is matricially tracially
stable. $C^{*}$-algebras without any embeddable tracial states are
automatically matricially tracially stable.

\begin{theorem}
\label{conditions} \label{mat nec}Suppose $\mathcal{A}$ is a separable unital
matricially tracially stable $C^{*}$-algebra with at least one embeddable
trace. Then

\begin{enumerate}
\item For all but finitely many positive integers $n$ there is a unital
representation $\pi_{n}:\mathcal{A}\rightarrow \mathcal{M}_{n}\left(
\mathbb{C}\right)  $

\item The set of embeddable tracial states on $\mathcal{A}$ is the $\ast
$-weak-closed convex hull of the set of matricial tracial states.
\end{enumerate}
\end{theorem}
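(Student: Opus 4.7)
The plan is to handle the two statements in reverse order, since (1) is a sharper consequence of matricial tracial stability obtained via a judicious choice of ultrafilter.

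For (1), I would argue by contradiction. Let $T$ denote the set of positive integers $n$ for which $\mathcal{A}$ admits \emph{no} unital $\ast$-homomorphism into $\mathcal{M}_{n}(\mathbb{C})$, and suppose $T$ is infinite. Extend the filter generated by $T$ together with the Fréchet filter to a non-principal ultrafilter $\alpha$ on $\mathbb{N}$, so that $T\in\alpha$. By hypothesis $\mathcal{A}$ has at least one embeddable tracial state, so Corollary \ref{AnyUltrafilter} yields a trace-preserving unital $\ast$-homomorphism
\[
\rho:\mathcal{A}\rightarrow\prod_{n\in\mathbb{N}}^{\alpha}\left(\mathcal{M}_{n}(\mathbb{C}),\tau_{n}\right).
\]
Matricial tracial stability forces $\rho$ to be approximately liftable: there exist $E\in\alpha$ and unital $\ast$-homomorphisms $\rho_{n}:\mathcal{A}\rightarrow\mathcal{M}_{n}(\mathbb{C})$ for each $n\in E$. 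But $E\cap T\in\alpha$ is non-empty, so some $n\in T$ admits a unital $\ast$-homomorphism $\rho_{n}$ into $\mathcal{M}_{n}(\mathbb{C})$, contradicting the definition of $T$.

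For (2), one inclusion is immediate: every matricial tracial state $\tau=\tau_{n}\circ\pi$ trivially satisfies the criterion of Lemma \ref{basic} (take $k=n$ and $A_{j}=\pi(a_{j})$), and hence is embeddable. Since the set of embeddable traces is weak*-closed and convex (as noted in the text after Lemma \ref{basic}), it contains the weak*-closed convex hull of the matricial traces. For the reverse inclusion, fix an embeddable trace $\tau$ and choose a unital $\ast$-homomorphism $\pi:\mathcal{A}\rightarrow\prod_{n\in\mathbb{N}}^{\alpha}(\mathcal{M}_{n}(\mathbb{C}),\tau_{n})$ with $\tau_{\alpha}\circ\pi=\tau$. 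By matricial tracial stability there exist $E\in\alpha$ and unital $\ast$-homomorphisms $\pi_{n}:\mathcal{A}\rightarrow\mathcal{M}_{n}(\mathbb{C})$ for $n\in E$ with $\pi(a)=\{\pi_{n}(a)\}_{\alpha}$, and therefore for every $a\in\mathcal{A}$,
\[
\tau(a)=\tau_{\alpha}(\pi(a))=\lim_{n\rightarrow\alpha}\tau_{n}(\pi_{n}(a)).
\]
This exhibits $\tau$ as a weak*-ultralimit of the matricial tracial states $\tau_{n}\circ\pi_{n}$, so $\tau$ lies in the weak*-closure, and hence in the weak*-closed convex hull, of the matricial traces.

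The main conceptual ingredient is the freedom in choosing the ultrafilter in (1), which is exactly what Corollary \ref{AnyUltrafilter} grants: without it, matricial tracial stability would only produce unital matrix representations along the particular ultrafilter that came packaged with the given embedding, and we could not target the "missing" dimensions in $T$. Once that flexibility is available, both parts reduce to a clean unpacking of the definition of approximate liftability and the identity $\tau_{\alpha}(\{x_{n}\}_{\alpha})=\lim_{n\rightarrow\alpha}\tau_{n}(x_{n})$.
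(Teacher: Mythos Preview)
Your proof is correct and follows essentially the same approach as the paper: for (1) you argue by contradiction, choose a non-trivial ultrafilter containing the set of ``missing'' dimensions, invoke Corollary~\ref{AnyUltrafilter} to obtain an embedding along that ultrafilter, and derive a contradiction from approximate liftability; for (2) you lift the embedding realizing the given embeddable trace and read off $\tau$ as an ultralimit of the matricial traces $\tau_n\circ\pi_n$. The only differences are cosmetic: you spell out the easy inclusion in (2) and the construction of the ultrafilter in (1) in slightly more detail than the paper does.
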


\begin{proof}
$\left(  1\right)  .$ Suppose, via contradiction, that the set $E$ of positive
integers $n$ for which there is no $n$-dimensional representation of
$\mathcal{A}$ is infinite. Let $\alpha$ be a non-trivial ultrafilter on
$\mathbb{N}$ with $E\in \alpha$. Let $\tau$ be the limit trace on $%
{\displaystyle \prod_{n\in \mathbb{N}}^{\alpha}}
\left(  \mathcal{M}_{n}\left(  \mathbb{C}\right)  ,\tau_{n}\right)  $. Then,
if $\rho$ is an embeddable trace on $\mathcal{A}$, by Corollary
\ref{AnyUltrafilter} there is a representation $\pi:\mathcal{A}\rightarrow%
{\displaystyle \prod_{n\in \mathbb{N}}^{\alpha}}
\left(  \mathcal{M}_{n}\left(  \mathbb{C}\right)  ,\tau_{n}\right)  $ such
that $\tau \circ \pi=\rho$. Since there $\mathcal{A}$ is matricially stable, the
set $\mathbb{N}\backslash E\in \alpha,$ which is a contradiction.

$\left(  2\right)  .$ If $\rho$ is an embeddable trace and $\alpha$ is a
non-trivial ultrafilter on $\mathbb{N}$ and $\pi:\mathcal{A}\rightarrow%
{\displaystyle \prod_{n\in \mathbb{N}}^{\alpha}}
\left(  \mathcal{M}_{n}\left(  \mathbb{C}\right)  ,\tau_{n}\right)  $ is a
representation such that $\tau \circ \pi=\rho$, then the matricial tracial
stability of $\mathcal{A}$ implies there is a set $F\in \alpha$ and, for each
$n\in F$, there is a representation $\pi_{n}:A\rightarrow \mathcal{M}%
_{n}\left(  \mathbb{C}\right)  $ such that the $\ast$-weak-limit along
$\alpha$ of the finite-dimensional traces $\tau_{n}\circ \pi_{n}$.
\end{proof}

\begin{corollary}
Suppose $\mathcal{A}$ is separable and matricially tracially stable. Then
$\mathcal{A}/\mathcal{J}_{et}$ is RFD. If the collection of embeddable traces
on $\mathcal{A}$ separates points of $\mathcal A$, then $\mathcal{A}$ is RFD.
\end{corollary}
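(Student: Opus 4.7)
My plan is to observe that both statements of the corollary reduce to a single fact: \emph{if $\mathcal{B}$ is a separable matricially tracially stable $C^{*}$-algebra whose embeddable tracial states separate points, then $\mathcal{B}$ is RFD}. The second assertion of the corollary is literally this fact. For the first, I would apply it with $\mathcal{B}=\mathcal{A}/\mathcal{J}_{et}$, invoking the two remarks made immediately before Theorem~\ref{conditions}: namely that $\mathcal{A}/\mathcal{J}_{et}$ is matricially tracially stable whenever $\mathcal{A}$ is, and that the embeddable tracial states of $\mathcal{A}/\mathcal{J}_{et}$ always separate its points (essentially by construction of $\mathcal{J}_{et}$). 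The trivial case where $\mathcal{A}$ has no embeddable trace gives $\mathcal{A}/\mathcal{J}_{et}=\{0\}$, which is vacuously RFD.

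To establish the fact itself, I would fix an arbitrary nonzero $a\in\mathcal{B}$ and exhibit a finite-dimensional $\ast$-representation of $\mathcal{B}$ not vanishing on $a$. Since the embeddable traces separate points of $\mathcal{B}$ and $a^{*}a\neq 0$, there exists an embeddable tracial state $\rho$ with $\rho(a^{*}a)>0$. By Theorem~\ref{conditions}(2), $\rho$ lies in the weak-$\ast$ closed convex hull of the matricial tracial states on $\mathcal{B}$. Hence one can find a convex combination $\sum_{i=1}^{k}\lambda_{i}\tau_{i}$ of matricial tracial states with
\[
\left|\sum_{i=1}^{k}\lambda_{i}\tau_{i}(a^{*}a)-\rho(a^{*}a)\right|<\tfrac{1}{2}\rho(a^{*}a),
\]
which forces $\tau_{i}(a^{*}a)>0$ for at least one index $i$. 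By definition, $\tau_{i}=\tau_{n_{i}}\circ\pi_{i}$ for some positive integer $n_{i}$ and unital $\ast$-homomorphism $\pi_{i}\colon\mathcal{B}\to\mathcal{M}_{n_{i}}(\mathbb{C})$, and faithfulness of $\tau_{n_{i}}$ on $\mathcal{M}_{n_{i}}(\mathbb{C})$ gives $\pi_{i}(a)\neq 0$. Therefore the family of finite-dimensional $\ast$-representations of $\mathcal{B}$ separates points, i.e., $\mathcal{B}$ is RFD.

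The entire argument should be very short; all of the substantive content has already been absorbed into Theorem~\ref{conditions}. The only moving part is the standard observation that if an average of nonnegative numbers is close to a strictly positive value, at least one summand must itself be positive, which is how one passes from the convex-hull characterization to a single matricial trace detecting $a^{*}a$. I do not anticipate any serious obstacle.
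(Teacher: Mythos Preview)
Your proposal is correct and is precisely the argument the paper has in mind: the corollary is stated without proof, as an immediate consequence of Theorem~\ref{conditions}(2) together with the remarks preceding that theorem (that $\mathcal{A}$ is matricially tracially stable iff $\mathcal{A}/\mathcal{J}_{et}$ is, and that the embeddable traces on $\mathcal{A}/\mathcal{J}_{et}$ separate its points). Your reduction to a single fact and the extraction of a matricial trace detecting $a^{*}a$ from the convex-hull description is exactly how one unpacks this.
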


The set of positive integers $n$ for which there is an $n$-dimensional
representation is an additive semigroup (direct sums) that is generated by the
 $k\in \mathbb{N}$ for which there is a $k$-dimensional irreducible
representation. The following simple lemma, which yields a reformulation of the
first condition in Theorem \ref{conditions}, should be well-known.

\begin{lemma}
\label{GCD} Suppose $n_{1}<n_{2}<\cdots<n_{s}$. The following are equivalent:

\begin{enumerate}
\item $GCD\left(  n_{1},\ldots,n_{k}\right)  =1.$

\item There is a positive integer $N$ such that every integer $n\geq N$ can be
written as
\[
n=\sum_{k=1}^{s}a_{k}n_{k}%
\]
with $a_{1},\ldots a_{k}$ nonnegative integers.
\end{enumerate}
\end{lemma}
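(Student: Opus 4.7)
The implication $(2)\Rightarrow(1)$ is immediate: if $d=\gcd(n_{1},\ldots,n_{s})>1$, then every nonnegative integer combination of the $n_{k}$'s is divisible by $d$, so no integer $n\geq N$ that is not a multiple of $d$ can be represented, contradicting $(2)$. My plan for $(1)\Rightarrow(2)$ is the standard residue-class argument, which I outline now.

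Set $S=\{\sum_{k=1}^{s}a_{k}n_{k}:a_{k}\in\mathbb{Z}_{\geq 0}\}$. The first step is to show that $S$ meets every residue class modulo $n_{1}$. Reducing modulo $n_{1}$, the hypothesis $\gcd(n_{1},\ldots,n_{s})=1$ means that the images of $n_{2},\ldots,n_{s}$ generate $\mathbb{Z}/n_{1}\mathbb{Z}$ as an additive group. In a finite abelian group the subsemigroup generated by any set equals the subgroup generated by that set (because every element has finite order, so its inverse is a positive multiple of itself). Hence the nonnegative integer combinations of $n_{2},\ldots,n_{s}$ already realize every residue class modulo $n_{1}$, and in particular every residue class is realized by an element of $S$.

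Next, for each $r\in\{0,1,\ldots,n_{1}-1\}$ pick $m_{r}\in S$ with $m_{r}\equiv r\pmod{n_{1}}$, and set
\[
N=\max_{0\leq r<n_{1}}m_{r}.
\]
Given any integer $n\geq N$, write $r$ for the residue of $n$ modulo $n_{1}$; then $n-m_{r}$ is a nonnegative multiple of $n_{1}$, say $n-m_{r}=qn_{1}$ with $q\geq 0$, and $n=m_{r}+qn_{1}\in S$. This establishes $(2)$.

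There is no real obstacle; the only slightly subtle step is the passage from ``the images generate the group $\mathbb{Z}/n_{1}\mathbb{Z}$'' to ``the images generate the semigroup $\mathbb{Z}/n_{1}\mathbb{Z}$,'' which relies on the finiteness of the quotient.
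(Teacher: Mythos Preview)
Your proof is correct. Both directions are handled cleanly, and the only step you flag as subtle (semigroup equals subgroup in a finite abelian group) is justified exactly as you say.

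Your route differs from the paper's in both directions. For $(2)\Rightarrow(1)$ the paper subtracts representations of $N$ and $N+1$ to obtain an integer combination of the $n_k$'s equal to $1$; you instead argue by divisibility, which is arguably more direct. For $(1)\Rightarrow(2)$ the paper uses B\'ezout to write $1=\sum s_k n_k$ with integer (possibly negative) $s_k$, then takes a large uniform multiple $m$ of all the $n_k$'s as a baseline and shows that for $n\geq N=m\sum n_k$ one can absorb the negative contributions by writing $n-N=qn_1+r$ and distributing $r$ copies of the B\'ezout relation. Your residue-class argument is the standard numerical-semigroup approach: show $S$ meets every class mod $n_1$ and take $N$ to be the largest chosen representative. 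The paper's proof gives an explicit (if not optimal) value of $N$ in terms of the B\'ezout coefficients; your proof is more conceptual but the resulting $N$ is non-explicit unless one tracks the orders in $\mathbb{Z}/n_1\mathbb{Z}$. Either approach is entirely adequate for this elementary lemma.
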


\begin{proof}
$\left(  2\right)  \Longrightarrow \left(  1\right)  .$ If we write
$N=\sum_{k=1}^{s}a_{k}n_{k}$ and $N+1=\sum_{k=1}^{s}b_{k}n_{k},$ then%
\[
\sum_{k=1}^{s}\left(  b_{k}-a_{k}\right)  n_{k}=1,
\]
which implies $\left(  1\right)  .$

$\left(  1\right)  \Longrightarrow \left(  2\right)  .$ If $GCD\left(
n_{1},\ldots,n_{k}\right)  =1$, there are integers $s_{1},\ldots,s_{k}$ such
that
\[
s_{1}n_{1}+\cdots+s_{k}n_{k}=1.
\]
Let $m=1+n_{1}\max \left(  \left \vert s_{1}\right \vert ,\ldots,\left \vert
s_{k}\right \vert \right)  $. Let $N=mn_{1}+\cdots+mn_{k}$. Suppose $n\geq$
$N$. Using the division algorithm we can find integers $q\geq0$ and $r$ with
$0\leq r<n_{1}$ such that%
\[
n-N=n_{1}q+r.
\]
Thus%
\[
n=mn_{1}+\cdots+mn_{k}+qn_{1}+r\left(  s_{1}n_{1}+\cdots+s_{k}n_{k}\right)  =
\]%
\[
\left(  m+rs_{1}+q\right)  n_{1}+\sum_{j=2}^{k}\left(  m+rs_{j}\right)
n_{j}.
\]
However, $m+rs_{j}\geq m-n_{1}\left \vert s_{j}\right \vert \geq1$ for $1\leq
j\leq k.$
\end{proof}

\begin{corollary}\label{Condition1Equivalent}
A unital C*-algebra satisfies condition $\left(  1\right)  $ in Theorem
\ref{conditions} if and only if $\left \{  n\in \mathbb{N}:\mathcal{A}\text{ has
an irreducible }n\text{-dimensional representation}\right \}  $ has greatest
common divisor equal to $1$.
\end{corollary}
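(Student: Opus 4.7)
The plan is to exploit the structural fact that any finite-dimensional unital representation of $\mathcal{A}$ splits as a direct sum of irreducible finite-dimensional representations, so that the set
\[
S = \{n \in \mathbb{N} : \mathcal{A} \text{ has a unital } n\text{-dimensional representation}\}
\]
is precisely the additive sub-semigroup of $\mathbb{N}$ generated by
\[
D = \{k \in \mathbb{N} : \mathcal{A} \text{ has an irreducible } k\text{-dimensional representation}\}.
\]
In particular $D \subseteq S$ and each element of $S$ is a nonnegative integer combination of elements of $D$, so $\gcd(S) = \gcd(D)$ (where gcd of an infinite subset of $\mathbb{N}$ is understood as the gcd of any finite subset realizing it, which exists since divisibility chains in $\mathbb{N}$ stabilize).

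For the direction $(\Rightarrow)$, I would assume condition $(1)$, so $S$ contains all sufficiently large integers. Then $S$ contains two consecutive integers $n$, $n+1$, and since $\gcd(n,n+1)=1$, we have $\gcd(S)=1$, hence $\gcd(D)=1$.

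For the direction $(\Leftarrow)$, assuming $\gcd(D) = 1$, I would select a finite subset $\{n_1, \ldots, n_s\} \subseteq D$ with $\gcd(n_1, \ldots, n_s) = 1$ (this is possible because the decreasing sequence of gcds of finite subsets must stabilize, and its limit divides every element of $D$). Then Lemma \ref{GCD} immediately gives a threshold $N$ such that every $n \geq N$ is a nonnegative integer combination $\sum a_k n_k$. Taking the corresponding direct sum of $a_k$ copies of the $n_k$-dimensional irreducible representations produces a unital $n$-dimensional representation of $\mathcal{A}$, verifying condition $(1)$.

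There is no real obstacle here; the substantive content has been isolated in Lemma \ref{GCD}, and the only thing to verify carefully is the semigroup identification $S = \mathbb{N}\text{-span}(D)$, which is immediate from complete reducibility of finite-dimensional $\ast$-representations of a $C^*$-algebra, together with the observation that $\gcd$ of the infinite set $D$ is attained on some finite subset.
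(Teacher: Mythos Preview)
Your proposal is correct and follows exactly the approach the paper intends: the paper observes just before Lemma~\ref{GCD} that the set of dimensions of unital representations is the additive semigroup generated by the irreducible dimensions, states Lemma~\ref{GCD}, and then records the corollary without further proof. Your argument simply spells out what the paper leaves implicit.
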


\begin{lemma}
\label{conditionsequivalent} \label{exist}A separable unital $C^{*}$-algebra
$\mathcal{A}$ satisfies statements $\left(  1\right)  $ and $\left(  2\right)
$ in Theorem \ref{mat nec} if and only if, for every embeddable tracial state
$\tau$ on $\mathcal{A}$, there is a positive integer $n_{0}$ and, for each
$n\geq n_{0}$ there is a unital $\ast$-homomorphism $\rho_{n}%
:\mathcal{A\rightarrow M}_{n}\left(  \mathbb{C}\right)  $ such that, for every
$a\in \mathcal{A}$,%
\[
\tau \left(  a\right)  =\lim_{n\rightarrow \infty}\tau_{n}\left(  \rho
_{n}\left(  a\right)  \right)
\]

\end{lemma}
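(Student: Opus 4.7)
The plan is to prove both implications. The forward direction (approximation property implies (1) and (2)) is a direct weak$^*$-convergence argument, while the reverse direction requires a dimension-matching construction that combines the matricial approximations supplied by (2) with the abundance of representations guaranteed by (1).

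For the forward direction, assume the approximation property and let $\tau$ be an embeddable tracial state. The unital $*$-homomorphisms $\rho_n : \mathcal{A} \to M_n(\mathbb{C})$ for $n \geq n_0$ immediately give statement (1). For statement (2), $\tau_n \circ \rho_n$ is a matricial trace for each $n$, and $\tau_n \circ \rho_n \to \tau$ in the weak$^*$-topology by assumption; hence $\tau$ lies in the weak$^*$-closure of the matricial traces, a fortiori in their weak$^*$-closed convex hull. The reverse inclusion requires two observations: each matricial trace $\tau_m \circ \pi$ is embeddable (approximate it on each finite tuple and each finite set of monomials by tracial evaluations on block-diagonal matrices and apply Lemma \ref{basic}), and the set of embeddable traces is convex and weak$^*$-closed (convexity via weighted direct-sum constructions; closedness by Lemma \ref{basic} via a two-step approximation).

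For the reverse direction, assume (1) and (2), fix an embeddable trace $\tau$ and a dense sequence $\{a_j\}_{j \geq 1}$ in $\mathcal{A}$, and fix $M$ such that (1) provides a unital $\pi_s : \mathcal{A} \to M_s(\mathbb{C})$ for every $s \geq M$. For each $k \in \mathbb{N}$, use (2) to write $\tau$ as an approximate rational convex combination of matricial traces $\tau_{m_i} \circ \pi_i$ within $1/k$ on $a_1, \ldots, a_k$; converting the rational weights into integer block multiplicities and taking the corresponding direct sum $\pi_1^{(j_1)} \oplus \cdots \oplus \pi_r^{(j_r)}$ produces a unital $\sigma_k : \mathcal{A} \to M_{N_k}(\mathbb{C})$ with $|\tau_{N_k}(\sigma_k(a_j)) - \tau(a_j)| < 1/k$ for $j \leq k$. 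Now choose an increasing sequence $L_1 < L_2 < \cdots$ with $L_k$ very large compared to $N_k$, $M$, and $\max_{j \leq k} \|a_j\|$, partition $\{n \geq L_1\}$ into consecutive blocks $B_k = [L_k, L_{k+1})$, and for $n \in B_k$ write $n = q N_k + s$ with $M \leq s < M + N_k$. Setting $\rho_n = \sigma_k^{(q)} \oplus \pi_s : \mathcal{A} \to M_n(\mathbb{C})$ gives
\[
\tau_n(\rho_n(a_j)) = \frac{qN_k}{n}\,\tau_{N_k}(\sigma_k(a_j)) + \frac{s}{n}\,\tau_s(\pi_s(a_j)),
\]
so $|\tau_n(\rho_n(a_j)) - \tau(a_j)| \leq 2(M+N_k)\|a_j\|/L_k + 1/k$ for $j \leq k$ and $n \in B_k$. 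With $L_k$ chosen to grow quickly enough in $k$, this tends to $0$ as $n \to \infty$. Density of $\{a_j\}$ together with $\|\rho_n(a) - \rho_n(a')\| \leq \|a - a'\|$ then extends $\tau_n(\rho_n(a)) \to \tau(a)$ to all $a \in \mathcal{A}$.

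The main obstacle is the quantifier order: statement (2) supplies approximants of $\tau$ only in selected dimensions $N_k$, whereas the conclusion demands a single family $\{\rho_n\}_{n \geq n_0}$ realizing $\tau$ as $n$ ranges over all sufficiently large integers. This is precisely where (1) is indispensable, because it allows the leftover dimension $s = n - q N_k$ to be absorbed by some $\pi_s$ while remaining bounded by $M + N_k$, so that the ratio $s/n$ vanishes inside each block $B_k$ and the approximating trace of $\sigma_k$ dominates the average.
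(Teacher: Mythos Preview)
Your proof is correct and follows essentially the same route as the paper's: approximate $\tau$ by a single matricial trace $\tau_{N_k}\circ\sigma_k$ obtained from a rational convex combination, then for general $n$ split off as many copies of $\sigma_k$ as possible and absorb the remainder using a representation supplied by condition~(1). The only cosmetic difference is in bookkeeping: the paper writes $n = a(mM) + b$ and uses $\pi^{(a-1)}\oplus\rho$ with $\rho$ of dimension $mM+b\geq n_0$, whereas you write $n = qN_k + s$ with $M\leq s < M+N_k$ and use $\sigma_k^{(q)}\oplus\pi_s$; both decompositions achieve the same goal of making the remainder at least the threshold from~(1) while keeping it bounded independently of $n$.
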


\begin{proof}
The "if" part is clear. Suppose statements $\left(  1\right)  $ and $\left(
2\right)  $ in Theorem \ref{mat nec} are true. Let $\left \{  a_{1}%
,a_{2},\ldots \right \}  $ be a norm dense subset if the unit ball of
$\mathcal{A}$. Let $\tau$ be an embeddable tracial state on $\mathcal{A}$, and
suppose $N$ is a positive integer. Then there is a convex combination $\sigma
$, with rational coefficients, of matricial states, and hence even of factor
matricial states, such that%
\[
\left \vert \tau \left(  a_{j}\right)  -\sigma \left(  a_{j}\right)  \right \vert
<1/2N
\]
for $1\leq j\leq N$. Thus there exist a positive integer $m$ and positive
integers $s_{1},\ldots,s_{\nu}$ with $s_{1}+\cdots+s_{\nu}=m$ and positive
integers $t_{1},\ldots,t_{\nu}$ with surjective representations $\pi
_{i}:\mathcal{A}\rightarrow \mathcal{M}_{t_{i}}\left(  \mathbb{C}\right)  $
such that%
\[
\sigma=\sum_{i=1}^{\nu}\frac{s_{i}}{m}\tau_{t_{i}}\circ \pi_{i}.
\]
Let $M\in \mathbb N$ be such $\frac{s_i}{t_i}M$ is an integer, for all $i \le \nu$.
If $\rho^{\left(  k\right)  }$ denotes a direct sum of $k$ copies of a
representation $\rho,$ then let
\[
\pi=\oplus_{i}\pi_{i}^{({\frac{s_i}{t_i}M})}:\mathcal{A}\rightarrow \mathcal{M}_{mM}\left(
\mathbb{C}\right)  .
\]
It is easy to check that $\sigma=\tau_{mM}\circ \pi$. Since by the assumption the statement (1) in
Theorem \ref{mat nec} holds, there is  $n_{0}$ such that for any $n\ge n_{0}$
there is a unital representation into $\mathcal{M}_{n}(\mathbb{C})$. Since for
a positive integer $q$ we have $\sigma=\tau_{qmM}\circ \pi^{\left(  q\right)  }%
$, we can assume that $mM>n_{0}$. Suppose $n>2mM$. We can write $n=amM+b$ with
$a\geq2$ and $0\leq b<mM$. Since $n_{0}\leq mM+b$, there is a unital
representation $\rho:\mathcal{A}\rightarrow \mathcal{M}_{mM+b}\left(
\mathbb{C}\right)  $. Let
\[
\pi_{N,n}=\pi^{ \left(  a-1\right)    }\oplus \rho
:\mathcal{A}\rightarrow \mathcal{M}_{n}\left(  \mathbb{C}\right)  .
\]

Then
\begin{multline*}
\tau_{n} \circ \pi_{N, n}(a_{k}) = \frac{(a-1)mM \tau_{Mm}\circ
\pi(a_{k}) + (mM+b) \tau_{mM+b} \circ \rho(a_{k})}{n} =
\\\frac{
(a-1)mM}{n}\sigma(a_{k}) + \frac{mM+b}{n}\tau_{m+b}\circ \rho(a_{k})
 =\\
(1- \frac{mM+b}{n})\sigma(a_{k}) + \frac{mM+b}{n}\tau_{m+b}\circ \rho(a_{k}).
\end{multline*}

There is a positive integer $k_{N}$ such that if $n\geq k_{N},$ then
$\frac{mM+b}{n}< \frac{1}{4N}$. Then for all $n\ge k_{N}$%

\[
\left \vert \tau_{n}\circ \pi_{N,n}\left(  a_{k}\right)  -\sigma \left(
a_{k}\right)  \right \vert <1/2N
\]
for $1\leq k\leq N$ and hence
\[
\left \vert \tau \left(  a_{k}\right)  -\tau_{n}\circ \pi_{N,n}\left(
a_{k}\right)  \right \vert <1/N
\]
for $1\leq k\leq N$ . We can easily arrange $k_{1}<k_{2}<\cdots$ and we can
define%
\[
\rho_{n}=\pi_{N,n}\text{ if }k_{N}\leq n<k_{N+1}.
\]
\bigskip
\end{proof}

\subsection{Matricial tracial stability for tracially nuclear C*-algebras}

Recall that a unital $C^{*}$-algebra $\mathcal{A}$ is \emph{nuclear} if and
only if, for every Hilbert space $H$ and every unital $\ast$-homomorphism
$\pi:\mathcal{A}\rightarrow B\left(  H\right)  $ the von Neumann algebra
$\pi \left(  \mathcal{A}\right)  ^{\prime \prime}$ is hyperfinite. The algebra
$\mathcal{A}$ is \emph{tracially nuclear} \cite{Hadwin-Li} if, for every
tracial state $\tau$ on $\mathcal{A}$, the algebra $\pi_{\tau}\left(
\mathcal{A}\right)  ^{\prime \prime}$ is hyperfinite, where $\pi_{\tau}$ is the
GNS representation for $\tau$. It is easy to show that if $\mathcal{A}$ is
tracially nuclear, then every trace on $\mathcal{A}$ is embeddable.

\begin{theorem}
\label{nuclear} Suppose $\mathcal{A}$ is a unital separable tracially nuclear
C*-algebra with at least one tracial state. The following are equivalent

\begin{enumerate}
\item $\mathcal{A}$ is matricially tracially stable

\item $\mathcal{A}$ satisfies the conditions $\left(  1\right)  $ and $\left(
2\right)  $ in Theorem \ref{mat nec}.

\item $\mathcal{A}$ is W*-factor tracially stable.
\end{enumerate}
\end{theorem}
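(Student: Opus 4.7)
There are three implications to establish. Two are essentially immediate. For $(1)\Rightarrow(2)$, observe that since $\mathcal{A}$ is tracially nuclear, every tracial state on $\mathcal{A}$ is embeddable (as remarked before the theorem), so the hypotheses of Theorem \ref{mat nec} apply and directly yield statements $(1)$ and $(2)$ of Theorem \ref{mat nec}. For $(3)\Rightarrow(1)$, each matrix algebra $\mathcal{M}_n(\mathbb{C})$ is a factor von Neumann algebra, so any ultraproduct of matrix algebras is an ultraproduct of factor von Neumann algebras and W*-factor tracial stability implies matricial tracial stability. The real content is $(2)\Rightarrow(3)$.

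For $(2)\Rightarrow(3)$, fix a unital $\ast$-homomorphism $\pi:\mathcal{A}\to \prod_{i\in\mathbb{N}}^{\alpha}(\mathcal{M}_{i},\tau_{i})$ where each $\mathcal{M}_{i}$ is a factor von Neumann algebra with faithful normal tracial state $\tau_{i}$. Since factors admitting a tracial state are classified as type $I_{d}$ or $II_{1}$, each $\mathcal{M}_{i}$ is either a matrix algebra $\mathcal{M}_{d_{i}}(\mathbb{C})$ or a $II_{1}$ factor. Let $\tau=\tau_{\alpha}\circ\pi$. By tracial nuclearity, $\pi_{\tau}(\mathcal{A})''$ is hyperfinite, and Lemma \ref{NotAddedYet} identifies it with $\pi(\mathcal{A})''$. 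Let $\{b_{k}\}_{k\in\mathbb{N}}$ be a dense sequence in $\mathcal{A}$, with $\pi(b_{k})=\{a_{k}(i)\}_{\alpha}$, and fix representatives with $\|a_{k}(i)\|\le\|b_{k}\|$. By Lemma \ref{LimitOfLiftable}, it suffices, for every $\varepsilon>0$ and every $N\in\mathbb{N}$, to produce a set $E\in\alpha$ and for each $i\in E$ a unital $\ast$-homomorphism $\pi_{i}:\mathcal{A}\to\mathcal{M}_{i}$ with $\|\pi_{i}(b_{k})-a_{k}(i)\|_{2,\tau_{i}}<\varepsilon$ for $1\le k\le N$.

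Apply Theorem \ref{Shanghai} to the hyperfinite algebra $\mathcal{N}=W^{\ast}(\pi(b_{1}),\ldots,\pi(b_{N}))\subseteq\pi(\mathcal{A})''$ with the restriction of the ultraproduct trace, producing $\delta>0$ and $N_{0}\in\mathbb{N}$ corresponding to $\varepsilon$. By condition $(2)$ and Lemma \ref{conditionsequivalent} (noting again that every tracial state is embeddable in the tracially nuclear case), there is $n_{1}\geq n_{0}$ and unital $\ast$-homomorphisms $\rho_{n}:\mathcal{A}\to\mathcal{M}_{n}(\mathbb{C})$ for $n\ge n_{1}$ such that for every $\ast$-monomial $m$ of degree at most $N_{0}$ in $N$ variables,
\[
|\tau_{n}\circ\rho_{n}(m(b_{1},\ldots,b_{N}))-\tau(m(b_{1},\ldots,b_{N}))|<\delta/2.
\]
Since ultraproduct moments converge along $\alpha$, there is also a set $E_{1}\in\alpha$ such that for $i\in E_{1}$ the moments $\tau_{i}(m(a_{1}(i),\ldots,a_{N}(i)))$ are within $\delta/2$ of those of $\pi(b_{k})$ in $\mathcal{N}$. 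Now for $i\in E_{1}$ we construct $\pi_{i}$: if $\mathcal{M}_{i}$ is a $II_{1}$ factor, pick a unital trace-preserving embedding $\iota_{i}:\mathcal{M}_{n_{1}}(\mathbb{C})\hookrightarrow\mathcal{M}_{i}$ (available via $n_{1}$ pairwise orthogonal equivalent projections summing to $1$) and set $\pi_{i}=\iota_{i}\circ\rho_{n_{1}}$; if $\mathcal{M}_{i}=\mathcal{M}_{d_{i}}(\mathbb{C})$ with $d_{i}\ge n_{1}$, set $\pi_{i}=\rho_{d_{i}}$. In both cases the moments of $(\pi_{i}(b_{k}))$ in $(\mathcal{M}_{i},\tau_{i})$ equal $\tau_{n}\circ\rho_{n}(m(b))$ for an $n\ge n_{1}$, hence are within $\delta/2$ of those of $\pi(b_{k})$. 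Theorem \ref{Shanghai} then yields a unitary $u_{i}\in\mathcal{M}_{i}$ with $\sum_{k=1}^{N}\|u_{i}\pi_{i}(b_{k})u_{i}^{\ast}-a_{k}(i)\|_{2,\tau_{i}}<\varepsilon$; replace $\pi_{i}$ by $\mathrm{Ad}(u_{i})\circ\pi_{i}$.

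The remaining case is when $\{i:d_{i}<n_{1}\}\in\alpha$, i.e., the matrix dimensions stay bounded on an $\alpha$-large set. By pigeonhole there is a fixed $d_{0}$ with $F=\{i:\mathcal{M}_{i}=\mathcal{M}_{d_{0}}(\mathbb{C})\}\in\alpha$; by compactness of bounded balls in $\mathcal{M}_{d_{0}}(\mathbb{C})$ we may pass to a sub-ultrafilter and assume each $a_{k}(i)$ converges along $\alpha$ to some $A_{k}\in\mathcal{M}_{d_{0}}(\mathbb{C})$. Then for each $\ast$-polynomial relation satisfied modulo $\|\cdot\|_{2}$-null in the ultraproduct, the finite-dimensionality of $\mathcal{M}_{d_{0}}$ promotes this to an operator-norm relation on the limits $A_{k}$, yielding a genuine unital $\ast$-homomorphism $\bar{\pi}:\mathcal{A}\to\mathcal{M}_{d_{0}}(\mathbb{C})$; set $\pi_{i}=\bar{\pi}$ on $F$. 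The main obstacle in this proof is exactly this case analysis on factor type together with the bookkeeping that ensures both the ultraproduct tuple and the constructed lift have moments approximating the same hyperfinite target so that Theorem \ref{Shanghai} applies uniformly; condition $(2)$ of Theorem \ref{mat nec} is precisely what provides enough matricial approximations to embed into every sufficiently large matrix block or $II_{1}$ factor appearing in the ultraproduct.
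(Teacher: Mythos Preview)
Your argument is correct and your overall architecture matches the paper's: the easy implications $(1)\Rightarrow(2)$ and $(3)\Rightarrow(1)$ are handled the same way, and the heart of $(2)\Rightarrow(3)$ is in both cases the combination of Lemma~\ref{conditionsequivalent} (to produce matricial approximations of the trace) with Theorem~\ref{Shanghai} (to turn matching moments into approximate unitary equivalence inside each factor). The difference lies in how the $II_1$ factors are treated. The paper splits the ultrafilter according to whether $\{\,i:\dim\mathcal{M}_i<\infty\,\}\in\alpha$ and, in the $II_1$ case, invokes a lifting result from \cite{Hadwin-Li} which directly produces unital $\ast$-homomorphisms $\gamma_n:\sigma(\mathcal{A})''\to\mathcal{M}_n$ realizing $b=\{\gamma_n(b)\}_\alpha$ for all $b$ in the hyperfinite image. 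You instead treat the $II_1$ case exactly like the large-matrix case: fix $n_1$, embed $\mathcal{M}_{n_1}(\mathbb{C})$ trace-preservingly into each $II_1$ factor, and feed everything through Theorem~\ref{Shanghai}. This makes the proof more self-contained (no appeal to \cite{Hadwin-Li}) at the cost of a slightly finer case analysis; in particular you are forced to deal explicitly with the bounded-dimension case $\{\,i:d_i<n_1\,\}\in\alpha$, which the paper's writeup glosses over. Your handling of that case is correct, though two small points of phrasing should be tightened: there is no need to ``pass to a sub-ultrafilter,'' since bounded nets in $\mathcal{M}_{d_0}(\mathbb{C})$ already converge along $\alpha$; and the cleanest way to justify the existence of $\bar\pi$ is simply to observe that the tracial ultrapower of a finite-dimensional algebra is the algebra itself, so $\pi$ is already a homomorphism into $\mathcal{M}_{d_0}(\mathbb{C})$.
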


\begin{proof}
$1)\Rightarrow2)$ This follows from Theorem \ref{mat nec}.

$2)\Rightarrow3)$. Suppose $\mathcal{A=C}^{\ast}\left(  x_{1},x_{2}%
,\ldots \right)  $, with all $x_i$'s being contractions,  suppose $\alpha$ is a free ultrafilter on $\mathbb{N}$, and
suppose $\sigma:\mathcal{A}\rightarrow%
{\displaystyle \prod_{n\in \mathbb{N}}^{\alpha}}
(\mathcal{M}_{n},\rho_{n})$, where $(\mathcal{M}_{n},\rho_{n})$ is a factor
von Neumann algebra with trace $\rho_{n}$. Denote by $\rho_{\alpha}$ the limit
trace on $%
{\displaystyle \prod_{n\in \mathbb{N}}^{\alpha}}
(\mathcal{M}_{n},\rho_{n})$. Since $\rho_{\alpha}$ a faithful trace on $%
{\displaystyle \prod_{n\in \mathbb{N}}^{\alpha}}
\left(  \mathcal{M}_{n},\rho_{n}\right)  $ and since $\mathcal{A}$ is
tracially nuclear, it follows that $\sigma \left(  \mathcal{A}\right)
^{\prime \prime}$ is hyperfinite.

Since $\alpha$ is an ultrafilter, either $\left \{  n\in \mathbb{N}%
:\dim \mathcal{M}_{n}<\infty \right \}  \in \alpha$ or $\left \{  n\in
\mathbb{N}:\dim \mathcal{M}_{n}=\infty \right \}  \in \alpha.$ We first consider
the latter case. In this case we can assume that every $\mathcal{M}_{n}$ is a
$II_{1}$-factor. In this case, it follows from \cite{Hadwin-Li} that, for each
$n\in \mathbb{N}$ there is a unital $\ast$-homomorphism $\gamma_{n}%
:\sigma \left(  \mathcal{A}\right)  ^{\prime \prime}\rightarrow \mathcal{M}_{n}$
such that, for every $b\in \sigma \left(  \mathcal{A}\right)  ^{\prime \prime}$
\[
\lim_{n\to \alpha}\rho_{n}\left(  \gamma_{n}\left(  b\right)  \right)  =\rho_{\alpha}\left(
b\right)  ,
\]
and%
\[
b=\left \{  \gamma_{n}\left(  b\right)  \right \}  _{\alpha}.
\]
Hence $\left \{  \gamma_{n}\circ \sigma \right \}_{n\in \mathbb N}  $ is an approximate lifting of
$\sigma.$

Next suppose $E:= \left \{  n\in \mathbb{N}%
:\dim \mathcal{M}_{n}<\infty \right \}  \in \alpha$ and $\mathcal{M}_{n}=\mathcal{M}_{k_n}\left(  \mathbb{C}\right)  $ and
$\rho_{n}=\tau_{k_n}$ for each $n\in E$.

By assumption and Lemma \ref{conditionsequivalent}, there is an $n_{0}%
\in \mathbb{N}$ and, for every $n\geq n_{0}$ there is a unital $\ast
$-homomorphism $\pi_{n}:\mathcal{A}\rightarrow \mathcal{M}_{n}\left(
\mathbb{C}\right)  $ such that, for every $a\in \mathcal{A}$,%
\[
\tau_{\alpha}\left(  a\right)  =\lim_{k\rightarrow \infty}\tau_{n}\left(
\pi_{n}\left(  a\right)  \right)  .
\]
For each $k\in \mathbb{N}$, write $\sigma \left(  x_{k}\right)  =\left \{
x_{k}\left(  n\right)  \right \}  _{\alpha}$. For any $s\in \mathbb{N}$ and any
$\ast$-monomial $m\left(  t_{1},\ldots,t_{s}\right)  $ we have%
\[
\lim_{n\rightarrow \alpha}\tau_{n}\left(  m\left(  x_{1}\left(  n\right)
,\ldots,x_{s}\left(  n\right)  \right)  \right)  =\tau_{\alpha}\left(
m\left(  \sigma \left(  x_{1}\right)  ,\ldots,\sigma \left(  x_{s}\right)
\right)  \right)
\]
and%
\[
\lim_{n\rightarrow \infty}\tau_{n}\left(  m\left(  \pi_{n}\left(  x_{1}\right)
,\ldots,\pi_{n}\left(  x_{s}\right)  \right)  \right)  =\tau_{\alpha}\left(
m\left(  \sigma \left(  x_{1}\right)  ,\ldots,\sigma \left(  x_{s}\right)
\right)  \right)  .
\]
Since $\sigma \left(  \mathcal{A}\right)  ^{\prime \prime}$ is hyperfinite, it
follows from Connes' theorem that, for each $s\in \mathbb{N}$, $\sigma \left(
C^{\ast}\left(  x_{1},\ldots,x_{s}\right)  \right)  ^{\prime \prime}%
\subset \sigma \left(  \mathcal{A}\right)  ^{\prime \prime}$ is also hyperfinite.
It now follows from Theorem \ref{Shanghai} that there is a sequence of
unitaries $U_{n}\in \mathcal{M}_{k_n}\left(  \mathbb{C}\right)  $ such that, for
every $s\in \mathbb{N}$,%
\[
\lim_{n\rightarrow \alpha}\left \Vert x_{s}\left(  n\right)  -U_{n}^{\ast}%
\pi_{n}\left(  x_{s}\right)  U_{n}\right \Vert _{2}=0.
\]
Hence $\left \{  U_{n}^{\ast}\pi_{n}\left(  \cdot \right)  U_{n}\right \}  $ is
an approximate lifting of $\sigma.$

$3)\Rightarrow1)$ is obvious.
\end{proof}

\medskip

Recall that a $C^{*}$-algebra $\mathcal{A}$ is called \textit{GCR} (or
\textit{type I}) if for any its irreducible representation $\pi: \mathcal{A
}\to B(H)$, $\pi(\mathcal{A})$ contains all the compact operators.

\begin{corollary}
\label{GCR} Suppose $\mathcal{A}$ is a separable GCR unital $C^{*}$-algebra
satisfying condition $\left(  1\right)  $ in Theorem \ref{mat nec}. Then
$\mathcal{A}$ is $W^{*}$-factor tracially stable.
\end{corollary}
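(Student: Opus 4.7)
The plan is to verify the hypotheses of Theorem \ref{nuclear} and then invoke the implication $(2)\Rightarrow(3)$ there, which directly yields $W^*$-factor tracial stability. Since every GCR (type I) $C^*$-algebra is nuclear, $\mathcal{A}$ is in particular tracially nuclear, and it follows from the remark preceding Theorem \ref{nuclear} that every tracial state on $\mathcal{A}$ is embeddable. Condition $(1)$ of Theorem \ref{mat nec} is the hypothesis of the corollary, so only condition $(2)$ remains to be checked: that every tracial state on $\mathcal{A}$ lies in the weak-$\ast$-closed convex hull of the matricial tracial states.

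The key structural input is that on a type I $C^*$-algebra every factor tracial state is matricial. Indeed, if $\tau$ is a tracial state whose GNS representation $\pi_{\tau}$ generates a factor $\pi_{\tau}(\mathcal{A})^{\prime\prime}$, then since $\mathcal{A}$ is type I so is $\pi_{\tau}(\mathcal{A})^{\prime\prime}$, hence this factor is isomorphic to $B(H)$ for some Hilbert space $H$. The induced faithful normal tracial state forces $\dim H<\infty$, so $\pi_{\tau}(\mathcal{A})^{\prime\prime}\cong\mathcal{M}_{n}(\mathbb{C})$ for some $n$ and $\tau=\tau_{n}\circ\rho$ for a unital $\ast$-homomorphism $\rho:\mathcal{A}\rightarrow\mathcal{M}_{n}(\mathbb{C})$.

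To finish, recall that the extreme points of the weak-$\ast$-compact convex set of tracial states on $\mathcal{A}$ are precisely the factor tracial states (a standard consequence of the direct-integral decomposition of tracial states). Combining this with the previous paragraph, every extreme tracial state is matricial, and the Krein--Milman theorem implies that every tracial state is in the weak-$\ast$-closed convex hull of matricial tracial states. This verifies condition $(2)$ of Theorem \ref{mat nec}, and Theorem \ref{nuclear} then gives $W^*$-factor tracial stability.

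The only real point to be attentive to is the invocation of von Neumann algebra structure theory — namely that a type I $C^*$-algebra produces a type I enveloping von Neumann algebra in any representation, and that the only type I factors admitting a faithful normal trace are the matrix algebras. Both are classical; no genuine obstacle arises beyond citing them.
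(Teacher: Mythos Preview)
Your proof is correct and follows essentially the same route as the paper: both argue that factor tracial states on a GCR algebra must be matricial (since the associated factor is type I with a trace, hence a matrix algebra), invoke the fact that extreme tracial states are factor states, apply Krein--Milman to get condition~(2), and then appeal to Theorem~\ref{nuclear}. You are slightly more explicit than the paper in noting that GCR implies nuclear (hence tracially nuclear), which is needed to apply Theorem~\ref{nuclear}.
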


\begin{proof}
The extreme points of the tracial states are the factor tracial states
(\cite{DonExtreme}). A factor representation of a GCR $C^{*}$-algebra must
yield a factor von Neumann algebra of type I, which must be isomorphic to some
$B(H)$. If it has a trace, then $H$ must be finite-dimensional. Thus the
factorial tracial states are finite-dimensional, and the Krein-Milman theorem
gives that all tracial states are in the weak*-closed convex hull of the
finite-dimensional states. Thus the condition $\left(  2\right)  $ in Theorem
\ref{mat nec} holds.
\end{proof}

\bigskip

The next statement follows from Theorem \ref{nuclear}, Theorem \ref{mat nec}
and Lemma \ref{conditionsequivalent}.

\begin{theorem}
\label{nuclear2} Suppose $\mathcal{A}$ is a separable tracially nuclear
C*-algebra with at least one tracial state. The following are equivalent

\begin{enumerate}
\item $\mathcal{A}$ is matricially tracially stable

\item for every tracial state $\tau$ on $\mathcal{A}$, there is a positive
integer $n_{0}$ and, for each $n\geq n_{0}$ there is a unital $\ast
$-homomorphism $\rho_{n}:\mathcal{A\rightarrow M}_{n}\left(  \mathbb{C}%
\right)  $ such that, for every $a\in \mathcal{A}$,%
\[
\tau \left(  a\right)  =\lim_{n\rightarrow \infty}\tau_{n}\left(  \rho
_{n}\left(  a\right)  \right) .
\]
(here $\tau_{n}$ is the usual tracial state on $\mathcal{M}_{n}(\mathbb{C})$)

\item $\mathcal{A}$ is W*-factor tracially stable.
\end{enumerate}
\end{theorem}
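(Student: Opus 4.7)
The plan is to obtain this theorem as a direct corollary by combining three earlier results: Theorem \ref{nuclear}, Theorem \ref{mat nec}, and Lemma \ref{conditionsequivalent}. The key observation that makes everything collapse is that, since $\mathcal{A}$ is tracially nuclear, every tracial state on $\mathcal{A}$ is embeddable (this is the remark preceding Theorem \ref{nuclear}). Therefore, any quantifier ranging over embeddable traces on $\mathcal{A}$ in fact ranges over all traces.

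First, I would appeal to Theorem \ref{nuclear} which, under the hypothesis that $\mathcal{A}$ is a separable tracially nuclear C*-algebra with at least one tracial state, already gives the equivalence of three conditions: (1) matricial tracial stability of $\mathcal{A}$, (3) $W^{*}$-factor tracial stability of $\mathcal{A}$, and the auxiliary condition that $\mathcal{A}$ satisfies statements (1) and (2) of Theorem \ref{mat nec} (existence of $n$-dimensional representations for all but finitely many $n$, and density of matricial traces in embeddable traces in the $*$-weak topology). This gives $(1)\Leftrightarrow(3)$ and reduces the remaining work to showing that condition (2) of the present theorem is equivalent to these auxiliary statements.

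Second, I would invoke Lemma \ref{conditionsequivalent}. That lemma states precisely that statements (1) and (2) of Theorem \ref{mat nec} are equivalent to the assertion that, for every embeddable tracial state $\tau$ on $\mathcal{A}$, there is an $n_0$ and for every $n\geq n_0$ a unital $*$-homomorphism $\rho_n: \mathcal{A}\to \mathcal{M}_n(\mathbb{C})$ with $\tau(a) = \lim_{n\to\infty} \tau_n(\rho_n(a))$ for every $a\in \mathcal{A}$. Now, because tracial nuclearity of $\mathcal{A}$ forces every tracial state to be embeddable, this is the same as condition (2) of Theorem \ref{nuclear2}. This closes the loop.

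There is essentially no genuine obstacle here, since all of the analytic content (Connes' theorem, the distance estimate from Theorem \ref{Shanghai}, Lemma \ref{basic}, and the division-algorithm argument in Lemma \ref{conditionsequivalent}) has already been carried out. The only thing to be careful about is the interchange of ``for every tracial state'' (as stated in condition (2) here) with ``for every embeddable tracial state'' (as stated in Lemma \ref{conditionsequivalent}); this is justified purely by the observation that tracial nuclearity makes every trace embeddable, so the two quantifiers agree. Thus the theorem follows by chaining together $(1)\Leftrightarrow \text{(conditions (1),(2) of Thm \ref{mat nec})}\Leftrightarrow(2)\Leftrightarrow(3)$.
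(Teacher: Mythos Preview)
Your proposal is correct and follows exactly the approach of the paper, which simply states that the theorem follows from Theorem~\ref{nuclear}, Theorem~\ref{mat nec}, and Lemma~\ref{conditionsequivalent}. Your identification of the one subtle point---that tracial nuclearity forces every tracial state to be embeddable, so that the quantifier in Lemma~\ref{conditionsequivalent} over embeddable traces coincides with the quantifier over all traces in condition~(2)---is precisely the reason these three results combine to give the equivalence.
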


\medskip

Below we give an example of an RFD nuclear $C^{*}$-algebra which has
finite-dimensional irreducible representations of all dimensions but is not
matricially tracially stable.

\begin{example}
\label{example}\textrm{Suppose $0<\theta_{1}<\theta_{2}<,1$ are irrational and
$\left \{  1,\theta_{1},\theta_{2}\right \}  $ is linearly independent over
$\mathbb{Q}$. Let $\lambda_{k}=e^{2\pi i\theta_{k}}$ for $k=1,2$. Let
$\mathcal{A}_{\theta_{k}}$ be the irrational rotation algebra generated by
unitaries $U_{k},V_{k}$ satisfying $U_{k}V_{k}=\lambda_{k}V_{k}U_{k}.$ We know
that each $\mathcal{A}_{\theta_{k}}$ is simple nuclear and has a unique
tracial state $\rho_{k}$. Let $U=U_{1}\oplus U_{2}$ and $V=V_{1}\oplus V_{2}$.
Since $UVU^{\ast}V^{\ast}=\lambda_{1}\oplus \lambda_2,$ we see that $1\oplus0\in
C^{\ast}\left(  U,V\right)  ,$ so
\[
C^{\ast}\left(  U,V\right)  =\mathcal{A}_{\theta_{1}}\oplus \mathcal{A}%
_{\theta_{2}}.
\]
By the linear independence assumption, there is an increasing sequence
$\left \{  n_{k}\right \}  $ of positive integers such that
\begin{equation}
\label{independence}\lambda_{1}^{n_{k}}\rightarrow1\text{ and }\lambda
_{2}^{n_{k}}\rightarrow1
\end{equation}
as $k\rightarrow \infty$. } We can assume also that \begin{equation}\label{ForCondition1} n_1 = 2 \;\text{and} \, n_2 = 3.\end{equation}

\textrm{For each positive integer $n$ and each $\lambda \in \mathbb{C}$, let
$\left \{  e_{1},\ldots,e_{n}\right \}  $ be the standard orthonormal basis for
$\mathbb{C}^{n}$, let $U_{n,\lambda}$ and $V_{n}$ be the matrices defined by%
\[
U_{n,\lambda}e_{j}=\lambda^{j-1}e_{j}%
\]
and%
\[
V_{n}e_{j}=e_{j+1}\text{ for }1\leq j<n;\text{ }V_{n}e_{n}=e_{1}.
\]
It follows from (\ref{independence}) that
\[
\left \Vert U_{n_{k},\lambda_{s}}V_{n_{k}}-\lambda_sV_{n_{k}}U_{n_{k},\lambda_{s}%
}\right \Vert \rightarrow0
\]
as $k\rightarrow \infty$ for $s=1,2$. The simplicity of each $\mathcal{A}%
_{\theta s}$ implies that, for every $\ast$-polynomial $p$ and $s=1,2,$ we
have%
\begin{equation}
\label{MF}\left \Vert p\left(  U_{n_{k},\lambda_{s}},V_{n_{k}}\right)
-p\left(  U_{s},V_{s}\right)  \right \Vert \rightarrow0.
\end{equation}
}

\textrm{The uniqueness of the trace on each $\mathcal{A}_{\theta s}$ implies
that, for every $\ast$-polynomial $p$ and $s=1,2,$ we have}

\textrm{%
\[
\lim_{\alpha}\tau_{n_{k}}\left(  p\left(  U_{n_{k},\lambda_{s}},V_{n_{k}%
}\right)  \right)  = \rho_{s}\left(  p\left(  U_{s},V_{s}\right)  \right)
\]
for any non-trivial ultrafilter $\alpha$, and hence
\begin{equation}
\label{embed}\tau_{n_{k}}\left(  p\left(  U_{n_{k},\lambda_{s}},V_{n_{k}%
}\right)  \right)  \rightarrow \rho_{s}\left(  p\left(  U_{s},V_{s}\right)
\right)
\end{equation}
as $k\rightarrow \infty$. }

\textrm{Let $\hat{U}=\sum_{k\in \mathbb{N}}^{\oplus}U_{n_{k},\lambda_{1}%
}^{\left(  k-1\right)  }\oplus U_{n_{k},\lambda_{2}}$ and $\hat{V}=\sum
_{k\in \mathbb{N}}^{\oplus}V_{n_{k}}^{\left(  k-1\right)  }\oplus V_{n_{k}}$,
and let%
\[
\mathcal{A}=C^{\ast}\left(  \hat{U},\hat{V}\right)  +\mathcal{K},
\]
where
\[
\mathcal{K}=\sum \nolimits_{k\in \mathbb{N}}^{\oplus}{\mathcal{M}}_{kn_{k}}(
\mathbb{C}) .
\]
It follows that $\mathcal{K}$ is nuclear and by (\ref{MF}) $\mathcal{A}%
/\mathcal{K}$ is isomorphic to $C^{\ast}\left(  U,V\right)  =\mathcal{A}%
_{\theta_{1}}\oplus \mathcal{A}_{\theta_{2}}$, which is also nuclear. Hence,
$\mathcal{A}$ is nuclear. Moreover, the only finite-dimensional irreducible
representations of $\mathcal{A}$ are the coordinate representations $\pi_{k}$
for $k\in \mathbb{N}$. However, by (\ref{embed}), for every $\ast$-polynomial
$p$, we have
\begin{multline*}
\lim_{k\rightarrow \infty}\tau_{kn_{k}}(\pi_{k}\left(  p\left(  \hat{U},\hat
{V}\right)  \right)  = \lim_{k\rightarrow \infty} \left(  \frac{k-1}{k}
\tau_{n_{k}} \left(  p(U_{n_{k}, \lambda_{1}}, V_{n_{k}})\right)  + \frac
{1}{k} \tau_{n_{k}} \left(  p(U_{n_{k}, \lambda_{2}}, V_{n_{k}})\right)
\right) \\
= \lim_{k\rightarrow \infty} \tau_{n_{k}} \left(  p(U_{n_{k}, \lambda_{1}},
V_{n_{k}})\right)  = \rho_{1}\left(  p\left(  U_{1},V_{1}\right)  \right)  .
\end{multline*}
Hence the trace $\rho$ on $\mathcal{A}$ that annihilates $\mathcal{K}$ and
sends $p\left(  \hat{U},\hat{V}\right)  $ to $\rho_{2}\left(  p\left(
U_{2},V_{2}\right)  \right)  $ is embeddable and cannot be approximated by
finite-dimensional tracial states. Thus $\mathcal{A}$ is nuclear and RFD and
by (\ref{ForCondition1}) and Corollary \ref{Condition1Equivalent} satisfies condition $\left(  1\right)  $ in Theorem \ref{conditions}, but does
not satisfy condition $\left(  2\right)  $, hence is not matricially tracially
stable. }

\end{example}



\section{Matricial tracial stability and free entropy dimension}

There is a close relationship between matricial stability and D. Voiculescu's
free entropy dimension \cite{V1}, \cite{V2} (via the free orbit dimension
in Hadwin-Shen \cite{HS}). D. Voiculescu defined his free entropy dimension
$\delta_{0}$ (\cite{V1}, \cite{V2}), and he applied it to show the existence
of a $II_{1}$ factor von Neumann algebra without a Cartan MASA, solving a
longstanding problem.

Suppose $\mathcal{A}$ is a unital C*-algebra with a tracial state $\tau$.
Suppose $x_{1},\ldots,x_{n}$ are elements in $\mathcal{A}$, $\varepsilon
>0,R>\max_{1\leq j\leq n}\|x_{j}\|$ and $N,k\in \mathbb{N}$. Voiculescu
\cite{V1} defines $\Gamma_{R,\tau}\left(  x_{1},\ldots,x_{n};N,k,\varepsilon
\right)  $ to be the set of all $n$-tuples $\left(  A_{1},\ldots,A_{n}\right)  $
of matrices in $\mathcal{M}_{k}\left(  \mathbb{C}\right)  $ with norm at most
$R$ such that%
\[
\left \vert \tau \left(  m\left(  x_{1},\ldots,x_{n}\right)  \right)  -\tau
_{n}\left(  m\left(  A_{1},\ldots,A_{n}\right)  \right)  \right \vert
<\varepsilon
\]
for all $\ast$-monomials $m\left(  t_{1},\ldots,t_{n}\right)  $ with degree at
most $N.$

Connes' embedding problem is equivalent to the assertion that, for every
unital C*-algebra $\mathcal{B}$ with a tracial state $\tau$, every $n$-tuple
$\left(  x_{1},\ldots,x_{n}\right)  $ of selfadjoint contractions in
$\mathcal{B}$, for any N, for any $R>\max_{1\leq j\leq n}\|x_{j}\|$ and for
every $\varepsilon>0$ there is a positive integer $k$ such that $\Gamma
_{R,\tau}\left(  x_{1},\ldots,x_{n};N,k,\varepsilon \right)  \neq \varnothing.$

Let $\mathcal{M}_{k}\left(  \mathbb{C}\right)  ^{n}=\left(  \left(
A_{1},\ldots,A_{n}\right)  :A_{1},\ldots,A_{n}\in \mathcal{M}_{k}\left(
\mathbb{C}\right)  \right)  $ and define $\left \Vert {}\right \Vert _{2}$ on
$\mathcal{M}_{k}\left(  \mathbb{C}\right)  ^{n}$ by%
\[
\left \Vert \left(  A_{1},\ldots,A_{n}\right)  \right \Vert _{2}^{2}=\sum
_{j=1}^{n}\tau_{k}\left(  A_{j}^{\ast}A_{j}\right)  .
\]
If $A = \left(  A_{1},\ldots,A_{n}\right)  \in \mathcal{M}_{k}\left(
\mathbb{C}\right)  ^{n}$ and $U$ is unitary, we define
$$U^*AU = \left( U^*A_1U, \ldots, U^*A_nU \right).$$
If $\omega>0$, we define the {\it $\omega$-orbit ball} of
$A = \left(  A_{1},\ldots,A_{n}\right)  $, denoted by $\mathcal{U}\left(
A_{1},\ldots,A_{n};\omega \right)  $, to be the set of all $B = \left(
B_{1},\ldots,B_{b}\right)  \in \mathcal{M}_{k}\left(  \mathbb{C}\right)  ^{n}$
such that there is a unitary $U\in \mathcal{M}_{k}\left(  \mathbb{C}\right)  $
such that%
\[
\left \Vert   U^{\ast}AU  -  B\right \Vert _{2}<\omega.
\]
If $\mathcal{E}\subset \mathcal{M}_{k}\left(  \mathbb{C}\right)  ^{n}$, we
define the $\omega$-orbit covering number of $\mathcal{E}$, denoted by
$\nu \left(  \mathcal{E},\omega \right)  $, to be the smallest number of
$\omega$-orbit balls that cover $\mathcal{E}$.

Let $\mathcal{A}=$C*$\left(  x_{1},\ldots,x_{n}\right)  $ and for each
positive integer $k,$ let \textrm{Rep}$\left(  \mathcal{A},k\right)
/\backsimeq$ denote the set of all unital $\ast$-homomorphisms from
$\mathcal{A}$ into $\mathcal{M}_{k}\left(  \mathbb{C}\right)  $ modulo unitary equivalence.
If $\pi_{1},\pi_{2}\in$\textrm{Rep}$\left(  \mathcal{A},k\right)
$ with corresponding images $\left[  \pi \right]  ,$ $\left[  \rho \right]  \in$
\textrm{Rep}$\left(  \mathcal{A},k\right)  /\backsimeq$ we define a metric%
\[
d_{k}\left(  \left[  \pi \right]  ,\left[  \rho \right]  \right)  =\min
\left \Vert \left(  \pi \left(  x_{1}\right)  ,\ldots,\pi \left(  x_{n}\right)
\right)  -\left(  U^{\ast}\rho \left(  x_{1}\right)  U,\ldots,U^{\ast}%
\rho \left(  x_{n}\right)  U\right)  \right \Vert_2
\]
as $U$ ranges over all of the $k\times k$ unitary matrices.

For each $0<\omega<1$, we define%
\[
\nu_{d_{k}}\left(  \mathrm{Rep}\left(  \mathcal{A},k\right)  /\backsimeq
,\omega \right)
\]
to be the minimal number of $d_{k}$-balls of radius $\omega$ it takes to cover
\textrm{Rep}$\left(  \mathcal{A},k\right)  /\backsimeq$.

\begin{lemma}
\label{cardrep}Suppose $\mathcal{A}=$C*$\left(  x_{1},\ldots,x_{n}\right)  $
is matricially tracially stable and $\tau$ is an embeddable tracial state on
$\mathcal{A}$. Let $R>\max_{1\leq j\leq n}\Vert x_{j}\Vert$. For each
$0<\omega<1$ there exists an $m_{\omega}\in \mathbb{N}$ such that, for all
integers $k,N\geq m_{\omega}$ and every $0<\varepsilon<1/m_{\omega}$%
\[
\mathrm{Card}\left(  \mathrm{Rep}\left(  \mathcal{A},k\right)  /\backsimeq
\right)  \geq \nu_{d_{k}}\left(  \mathrm{Rep}\left(  \mathcal{A},k\right)
/\backsimeq,\omega/4\right)  \geq \nu \left(  \Gamma_{R,\tau}\left(
x_{1},\ldots,x_{n};N,k,\varepsilon \right)  ,\omega \right)  .
\]

\end{lemma}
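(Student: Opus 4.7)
The first inequality $\mathrm{Card}(\mathrm{Rep}(\mathcal{A},k)/\backsimeq) \ge \nu_{d_k}(\mathrm{Rep}(\mathcal{A},k)/\backsimeq,\omega/4)$ is immediate, since any set with any metric is covered by the singleton balls around its points. The substance of the lemma is therefore the second inequality, and my plan is to use matricial tracial stability to convert near-microstates of $\tau$ into honest representations in $\|\cdot\|_2$, and then transport a $d_k$-cover of representations to an orbit-ball cover of microstates via the triangle inequality.

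For the first step, I would invoke condition~(3) of Lemma~\ref{OnlyOneUltrafilter}, applied to the embeddable trace $\tau$, with $s=n$, and with error tolerance $\omega/2$. This produces an integer $m_\omega$ (depending on $\omega$, $\tau$, $R$, and the presentation $x_1,\ldots,x_n$) such that whenever $(A_1,\ldots,A_n)\in \mathcal{M}_k(\mathbb{C})^n$ has norm at most $R$ and matches the $\tau$-moments up to degree $m_\omega$ within $1/m_\omega$, there is a unital $*$-homomorphism $\pi\colon \mathcal{A}\to \mathcal{M}_k(\mathbb{C})$ with $\sum_{j=1}^n \|A_j-\pi(x_j)\|_{2,\tau_k}<\omega/2$. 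The only technical wrinkle is that Lemma~\ref{OnlyOneUltrafilter}(3) is stated with the norm bound $1+\|x_k\|$ rather than $R$; but an inspection of its proof shows that any uniform norm bound works equally well, so one simply re-runs it with $R$ in place of $1+\|x_k\|$.

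Next, fix a minimal $d_k$-cover $\{[\pi_1],\ldots,[\pi_M]\}$ of $\mathrm{Rep}(\mathcal{A},k)/\backsimeq$ by balls of radius $\omega/4$, so $M = \nu_{d_k}(\mathrm{Rep}(\mathcal{A},k)/\backsimeq,\omega/4)$. I claim that the $\omega$-orbit balls around the tuples $(\pi_i(x_1),\ldots,\pi_i(x_n))$, $1\le i\le M$, cover $\Gamma_{R,\tau}(x_1,\ldots,x_n;N,k,\varepsilon)$ for any $N\ge m_\omega$ and any $\varepsilon<1/m_\omega$; since enlarging $N$ and shrinking $\varepsilon$ only shrinks $\Gamma$, it suffices to check the case $N=m_\omega$, $\varepsilon = 1/m_\omega$. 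Given $A=(A_1,\ldots,A_n)\in \Gamma$, the previous step furnishes $\pi\in \mathrm{Rep}(\mathcal{A},k)$ with $\|A-(\pi(x_j))_j\|_2 \le \sum_j \|A_j-\pi(x_j)\|_{2,\tau_k}<\omega/2$. Picking $i$ with $d_k([\pi],[\pi_i])<\omega/4$ yields a unitary $U$ such that $\|(\pi(x_j))_j-U^*(\pi_i(x_j))_j U\|_2<\omega/4$, and the triangle inequality then gives $\|A-U^*(\pi_i(x_j))_j U\|_2<3\omega/4<\omega$, placing $A$ in the $\omega$-orbit ball of $(\pi_i(x_j))_j$, as desired.

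The main obstacle is pinning down that the integer $m_\omega$ supplied by tracial stability depends neither on $k$ nor on the particular microstate $A$. This uniformity is exactly what condition~(3) of Lemma~\ref{OnlyOneUltrafilter} buys: the integer it produces depends only on $\varepsilon$, $s$, and $\tau$, not on the matrix size or even on the target $C^*$-algebra. Once this uniformity is secured, the remainder is the routine covering/triangle-inequality argument sketched above, and the quantifier over $k,N\ge m_\omega$ and $\varepsilon<1/m_\omega$ comes for free from the monotonicity of $\Gamma_{R,\tau}$ in $N$ and $\varepsilon$.
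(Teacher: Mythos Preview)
Your proof is correct and rests on the same core idea as the paper's: use matricial tracial stability (via the $\varepsilon$--$\delta$ formulation, which you extract from Lemma~\ref{OnlyOneUltrafilter}(3)) to associate to each microstate a nearby honest representation, uniformly in $k$. The one structural difference is that you run a \emph{covering} argument---you take a minimal $d_k$-cover of $\mathrm{Rep}(\mathcal{A},k)/\backsimeq$ by $(\omega/4)$-balls and show the corresponding $\omega$-orbit balls cover $\Gamma$---whereas the paper runs the dual \emph{packing} argument: it selects $s=\nu(\Gamma,\omega)$ microstates $B_1,\ldots,B_s$ that are pairwise $\omega$-orbit-separated, applies stability with tolerance $\omega/4$ to obtain representations $\pi_{B_j}$, checks by the triangle inequality that $d_k([\pi_{B_i}],[\pi_{B_j}])\geq\omega/2$ for $i\neq j$, and concludes that no $(\omega/4)$-ball can contain two of them. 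Both directions are equally valid and equally short; your covering version is arguably slightly more direct since it avoids the implicit packing--covering comparison. Your remark about replacing the norm bound $1+\|x_k\|$ by $R$ in Lemma~\ref{OnlyOneUltrafilter}(3) is also correct and matches what the paper tacitly assumes when it invokes ``the $\varepsilon$--$\delta$ definition.''
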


\begin{proof}
Let $0<\omega<1$.
It is easy to deduce from the $\epsilon-\delta$-definition of matricial tracial stability that, there is a positive integer
$m_{\omega}$ such that, for every $k\in \mathbb{N}$ and every $N\geq m_{\omega
}$ and $0<\varepsilon<1/m_{\omega}$, we have for each $B=\left(  b_{1}%
,\ldots,b_{n}\right)  \in \Gamma_{R,\tau}\left(  x_{1},\ldots,x_{n}%
;N,k,\varepsilon \right)  $ a representation $\pi_{B}\in \mathrm{Rep}\left(
\mathcal{A},k\right)  $ such that%
\[
\left \Vert B-\left(  \pi_{B}\left(  x_{1}\right)  ,\ldots,\pi_{B}\left(
x_{n}\right)  \right)  \right \Vert _{2}<\omega/4.
\]
Now suppose $N\geq m_{\omega}$ and $0<\varepsilon<1/m_{\omega}$. It follows from the
definition of $s=\nu \left(  \Gamma_{R,\tau}\left(  x_{1},\ldots,x_{n}%
;N,k,\varepsilon \right)  ,\omega \right)  $ that there is a collection
$$\left \{  B_{j}\in \Gamma_{R,\tau}\left(  x_{1},\ldots,x_{n}%
;N,k,\varepsilon \right):1\leq j\leq s\right \}  $$ so that, for any $k\times k$ unitary
$U$ and $1\leq i\neq j\leq j$,%
\[
\left \Vert U^{\ast}B_{i}U-B_{j}\right \Vert _{2}\geq \omega \text{.}%
\]
For each $j,$ $1\leq j\leq s$. It easily follows that, for $1\leq i\neq j\leq
s,$%
\[
d_{k}\left(  \left[  \pi_{B_{i}}\right]  ,\left[  \pi_{B_{j}}\right]  \right)
\geq \omega/2.
\]
Hence every $d_{k}$-ball with radius $\omega/4$ contains at most one of
$\left[  \pi_{B_j}\right]  $, $1\leq j\leq s.$ Hence,%
\[\mathrm{Card}\left(  \mathrm{Rep}\left(  \mathcal{A},k\right)  /\backsimeq
\right)  \geq
\nu_{d_{k}}\left(  \mathrm{Rep}\left(  \mathcal{A},k\right)  /\backsimeq
,\omega/4\right)  \geq \nu \left(  \Gamma_{R,\tau}\left(  x_{1},\ldots
,x_{n};N,k,\varepsilon \right)  ,\omega \right)  .
\]

\end{proof}

\medskip

In \cite{HS} the first named author and J. Shen defined the free-orbit
dimension $\mathfrak{K}_{1}\left(  x_{1},\ldots,x_{n};\tau \right)  $. First let %
\[
\mathfrak{K}\left(  x_{1},\ldots,x_{n};\tau,\omega \right)  =\inf
_{\varepsilon,N}\limsup_{k\rightarrow \infty}\frac{\log \nu \left(
\Gamma_{R,\tau}\left(  x_{1},\ldots,x_{n};N,k,\varepsilon \right)
,\omega \right)  }{k^{2}\left \vert \log \omega \right \vert },
\]
and let %
\[
\mathfrak{K}_{1}\left(  x_{1},\ldots,x_{n};\tau \right)  =\limsup
_{\omega \rightarrow0^{+}}\mathfrak{K}\left(  x_{1},\ldots,x_{n};\tau
,\omega \right)  .
\]
If $\mathcal{A}=$C*$\left(  x_{1},\ldots,x_{n}\right)  $ is matricially
tracially stable and $\tau$ is an embeddable tracial state on $\mathcal A$, then by Lemma \ref{cardrep}%
\[
\frac{1}{\left \vert \log \omega \right \vert }\limsup_{k\rightarrow \infty}%
\frac{\log \mathrm{Card}\left(  \mathrm{Rep}\left(  \mathcal{A},k\right)
/\backsimeq \right)  }{k^{2}}\geq \mathfrak{K}\left(  x_{1},\ldots,x_{n}%
;\tau,\omega \right)  .
\]
It follows that if $\mathfrak{K}_{1}\left(  x_{1},\ldots,x_{n};\tau \right)
>0,$ then $\limsup_{k\rightarrow \infty}\frac{\log \mathrm{Card}\left(
\mathrm{Rep}\left(  \mathcal{A},k\right)  /\backsimeq \right)  }{k^{2}} =
\infty$. If $\delta_{0}\left(  x_{1},\ldots,x_{n}; \tau \right)  $ denotes D.
Voiculescu's free entropy dimension, we know from \cite{V2} that
\[
\delta_{0}\left(  x_{1},\ldots,x_{n}; \tau \right)  \leq1+\mathfrak{K}%
_{1}\left(  x_{1},\ldots,x_{n};\tau \right) .
\]
This gives the following result, which shows that a matricially tracially
stable algebra may be forced to have a lot of representations of each large
finite dimension.

\begin{theorem}
\label{cardrep1} Suppose $\mathcal{A}=$C*$\left(  x_{1},\ldots,x_{n}\right)  $
is matricially tracially stable and $\tau$ is an embeddable tracial state on
$\mathcal{A}$ such that $1<\delta_{0}\left(  x_{1},\ldots,x_{n}\right)  .$
Then%
\[
\limsup_{k\rightarrow \infty}\frac{\log \mathrm{Card}\left(  \mathrm{Rep}\left(
\mathcal{A},k\right)  /\backsimeq \right)  }{k^{2}} = \infty.
\]

\end{theorem}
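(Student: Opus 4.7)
The plan is to chain Lemma~\ref{cardrep}, the definition of $\mathfrak{K}_1$, and Voiculescu's inequality $\delta_0 \le 1 + \mathfrak{K}_1$ into a direct estimate, following the trail the authors have already laid in the text preceding the statement.

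Fix $\omega \in (0,1)$ and let $m_\omega$ be the integer supplied by Lemma~\ref{cardrep}. For every $k, N \ge m_\omega$ and every $0 < \varepsilon < 1/m_\omega$ that lemma yields
\[
\mathrm{Card}(\mathrm{Rep}(\mathcal{A},k)/\backsimeq) \ge \nu(\Gamma_{R,\tau}(x_1,\ldots,x_n;N,k,\varepsilon),\omega).
\]
Taking $\log$, dividing by $k^2 |\log \omega|$, and passing to $\limsup_{k\to\infty}$ gives
\[
\frac{1}{|\log \omega|} \limsup_{k\to\infty}\frac{\log \mathrm{Card}(\mathrm{Rep}(\mathcal{A},k)/\backsimeq)}{k^2} \ge \limsup_{k\to\infty}\frac{\log \nu(\Gamma_{R,\tau}(x_1,\ldots,x_n;N,k,\varepsilon),\omega)}{k^2 |\log \omega|}
\]
for every such $N,\varepsilon$. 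Because enlarging $N$ or shrinking $\varepsilon$ shrinks $\Gamma_{R,\tau}$ and hence $\nu$, the infimum of the right-hand side over the restricted range $N \ge m_\omega$, $\varepsilon < 1/m_\omega$ equals the infimum over all $N \in \mathbb{N}$ and $\varepsilon > 0$, which is $\mathfrak{K}(x_1,\ldots,x_n;\tau,\omega)$. Therefore
\[
\frac{1}{|\log \omega|} \limsup_{k\to\infty}\frac{\log \mathrm{Card}(\mathrm{Rep}(\mathcal{A},k)/\backsimeq)}{k^2} \ge \mathfrak{K}(x_1,\ldots,x_n;\tau,\omega).
\]

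Next, the hypothesis $\delta_0(x_1,\ldots,x_n;\tau) > 1$ together with Voiculescu's inequality $\delta_0 \le 1 + \mathfrak{K}_1$ from \cite{V2} forces $\mathfrak{K}_1(x_1,\ldots,x_n;\tau) > 0$. Since $\mathfrak{K}_1 = \limsup_{\omega \to 0^+} \mathfrak{K}(\cdot,\omega)$, I can select a sequence $\omega_j \to 0^+$ and a constant $c > 0$ with $\mathfrak{K}(x_1,\ldots,x_n;\tau,\omega_j) \ge c$ for all $j$. Inserting $\omega = \omega_j$ into the previous display gives
\[
\limsup_{k\to\infty}\frac{\log \mathrm{Card}(\mathrm{Rep}(\mathcal{A},k)/\backsimeq)}{k^2} \ge c\,|\log \omega_j|,
\]
and since $|\log \omega_j| \to \infty$ the left-hand side must be $+\infty$, completing the proof.

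I do not foresee any serious obstacle: the theorem is essentially a formal consequence of Lemma~\ref{cardrep}, the definitions of $\mathfrak{K}$ and $\mathfrak{K}_1$, and Voiculescu's inequality. The only mildly delicate bookkeeping point is verifying that restricting the parameters to $N \ge m_\omega$ and $\varepsilon < 1/m_\omega$ in Lemma~\ref{cardrep} does not change the infimum defining $\mathfrak{K}(\cdot,\omega)$, which follows from the monotonicity of $\nu(\Gamma_{R,\tau}(\ldots),\omega)$ in $(N,\varepsilon)$.
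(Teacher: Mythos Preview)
Your proposal is correct and follows essentially the same route as the paper: the argument appearing in the paragraph immediately before Theorem~\ref{cardrep1} chains Lemma~\ref{cardrep}, the definition of $\mathfrak{K}$ and $\mathfrak{K}_1$, and the inequality $\delta_0 \le 1 + \mathfrak{K}_1$ in exactly the way you describe. If anything, you have made explicit one point the paper leaves implicit, namely that restricting to $N \ge m_\omega$ and $\varepsilon < 1/m_\omega$ does not affect the infimum defining $\mathfrak{K}(\cdot,\omega)$ by monotonicity of $\Gamma_{R,\tau}$ in $(N,\varepsilon)$.
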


\medskip

Below using this theorem we give an example which shows that for not tracially nuclear $C^{*}%
$-algebras, the conditions 1) and 2) in Theorem \ref{conditions} are not
sufficient for being matricially tracially stable.

\begin{lemma}
\label{irreduciblepairsdense} \label{4}The set $\left \{  \left(  U,V\right)
\in \mathcal{U}_{n}\times \mathcal{U}_{n}:C^{\ast}\left(  U,V\right)
=\mathcal{M}_{n}\left(  \mathbb{C}\right)  \right \}  $ is norm dense in
$\mathcal{U}_{n}\times \mathcal{U}_{n}$.
\end{lemma}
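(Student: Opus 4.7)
The plan is to reduce to the case where one of the unitaries is diagonal with distinct eigenvalues, and then perturb the second one so all its off-diagonal entries (in that basis) are nonzero. Once this is achieved, a short commutant calculation shows the pair generates all of $\mathcal{M}_n(\mathbb{C})$.

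First, I would recall that unitaries with $n$ distinct eigenvalues form a norm-dense open subset of $\mathcal{U}_n$: indeed, diagonalizing $U$ and slightly perturbing its eigenvalues produces a nearby unitary with simple spectrum. So given $(U,V)$ and $\varepsilon>0$, I may approximate $U$ by some $U'$ with distinct eigenvalues, and then after a unitary change of basis (which does not affect the generated $C^*$-algebra) assume $U' = \mathrm{diag}(\lambda_1,\ldots,\lambda_n)$ with $\lambda_i\neq\lambda_j$ for $i\neq j$. Of course, the same change of basis is applied to $V$, producing a matrix $V_0$ close to (a unitary conjugate of) $V$.

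Next I would perturb $V_0$ to a nearby unitary $V'$ whose matrix in the standard basis has all off-diagonal entries nonzero. The key observation is that for any fixed unitary $V_0$, the map $H\mapsto e^{iH}V_0$ from self-adjoint matrices to $\mathcal{U}_n$ is smooth and surjective near $V_0$, and for each pair $i\neq j$ the condition that the $(i,j)$ entry of $e^{iH}V_0$ vanishes is a single real-analytic equation in $H$ that is not identically satisfied (at $H=0$ it depends linearly on $H$ through a nonzero functional, by a direct calculation). Hence the set of $H$ for which some off-diagonal entry vanishes is a finite union of proper real-analytic subvarieties of $\mathfrak{u}(n)$, so has empty interior; I can therefore choose $H$ of arbitrarily small norm giving $V' = e^{iH}V_0$ with all off-diagonal entries nonzero while $\Vert V'-V_0\Vert<\varepsilon$.

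Finally I would verify irreducibility of $(U',V')$. The commutant of $U'=\mathrm{diag}(\lambda_1,\ldots,\lambda_n)$ inside $\mathcal{M}_n(\mathbb{C})$ is the algebra $D$ of diagonal matrices, because the $\lambda_i$ are pairwise distinct. If $T=\mathrm{diag}(t_1,\ldots,t_n)\in D$ commutes with $V'$, then from $TV'=V'T$ one reads $(t_i-t_j)(V')_{ij}=0$ for all $i,j$; since all off-diagonal entries $(V')_{ij}$ are nonzero, all $t_i$ are equal, so $T$ is scalar. Thus $\{U',V'\}'=\mathbb{C}I$, so by the double commutant $C^*(U',V')=\mathcal{M}_n(\mathbb{C})$. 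Chaining the perturbations $(U,V)\to(U',V_0)\to(U',V')$ gives the required approximation.

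The main obstacle is the perturbation step: one must argue that forcing all off-diagonal entries to be nonzero is a generic condition achievable by an arbitrarily small norm perturbation staying inside $\mathcal{U}_n$. The real-analytic/Baire argument above handles this, but one should be mindful that we are working on a compact manifold rather than in a linear space, which is why parametrizing perturbations multiplicatively via $e^{iH}$ is convenient.
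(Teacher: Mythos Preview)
Your argument is correct. Both your proof and the paper's begin identically: perturb $U$ so it is diagonal with distinct eigenvalues in some orthonormal basis $\{e_1,\ldots,e_n\}$, and then adjust $V$ so that the only diagonal matrix commuting with the new $V$ is scalar. The difference lies in which generic condition on $V$ is used to force this. You arrange that \emph{every} off-diagonal entry of $V'$ in the $U'$-eigenbasis is nonzero, and then read off $t_i=t_j$ directly from $(t_i-t_j)(V')_{ij}=0$. The paper instead perturbs $V$ so that it has simple spectrum and one eigenvector $v=\sum_k \lambda_k e_k$ with all $\lambda_k\neq 0$; then a diagonal $T$ commuting with $V'$ must fix the line $\mathbb{C}v$, giving $t_k\lambda_k=\lambda\lambda_k$ for some scalar $\lambda$, hence $T=\lambda I$. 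Your condition is more hands-on and yields a one-line commutant computation, at the cost of a slightly more delicate genericity argument (your real-analytic/Baire step via $H\mapsto e^{iH}V_0$); the paper's condition is lighter to justify as a perturbation but requires the extra observation about the eigenvector. Either route is fine.

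One small wording point: your sentence ``at $H=0$ it depends linearly on $H$ through a nonzero functional'' tacitly assumes the constant term $(V_0)_{ij}$ already vanishes. What you actually need (and what your argument gives) is that the real-analytic map $H\mapsto (e^{iH}V_0)_{ij}$ is not identically zero: if $(V_0)_{ij}\neq 0$ this is immediate, and otherwise the first-order term $i(HV_0)_{ij}=i\sum_k H_{ik}(V_0)_{kj}$ is a nonzero linear functional because the $j$-th column of the unitary $V_0$ is nonzero. With that clarification the genericity step is complete.
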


\begin{proof}
Suppose $\left(  U,V\right)  \in \mathcal{U}_{n}\times \mathcal{U}_{n}$. Perturb
$U$ by an arbitrary small amount so that it is diagonal with no repeated
eigenvalues with respect to some orthonormal basis $\left \{  e_{1}%
,\ldots,e_{n}\right \}  $ and perturb $V$ by an arbitrary small amount so that
its eigenvalues are not repeated and one eigenvector has the form $\sum
_{k=1}^{n}\lambda_{k}e_{k}$ with $\lambda_{k}\neq0$ for $1\leq k\leq n$.
\end{proof}

\medskip

\noindent The following lemma states that every irreducible representation of
$(\pi_{1} \oplus \pi_{2}) \left(  \mathcal{A}\right)  $ must either factor
through $\pi_{1}$ or through $\pi_{2}$.

\begin{lemma}
\label{5}If $\pi=\pi_{1}\oplus \pi_{2}$ is a direct sum of unital
representations of a unital C*-algebra $\mathcal{A}$ and $\rho$ is an
irreducible representation of $\mathcal{A}$ such that $\ker \pi \subset \ker \rho
$, then either $\ker \pi_{1}\subset \ker \rho$ or $\ker \pi_{2}\subset \ker \rho$.
\end{lemma}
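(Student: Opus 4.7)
My plan is to argue by contradiction, reducing the claim to a standard invariant-subspace argument for irreducible representations. Set $I_{1}=\ker \pi_{1}$ and $I_{2}=\ker \pi_{2}$, so that $\ker \pi = I_{1}\cap I_{2}$, and assume, toward contradiction, that $\rho(I_{1})\neq \{0\}$ and $\rho(I_{2})\neq \{0\}$.

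The first step is a purely algebraic observation. Since $I_{1}$ and $I_{2}$ are two-sided ideals in $\mathcal{A}$, both products $I_{1}I_{2}$ and $I_{2}I_{1}$ are contained in $I_{1}\cap I_{2} = \ker \pi \subset \ker \rho$. Therefore $\rho(I_{1})\rho(I_{2}) = \rho(I_{2})\rho(I_{1}) = \{0\}$. This is the structural ingredient that will render the two images ``orthogonal'' as sets of operators.

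The main step is then to exploit irreducibility. Let $K$ be the Hilbert space on which $\rho$ acts, and consider the closed subspace $V := \overline{\rho(I_{1})K}$. I claim $V$ is invariant under $\rho(\mathcal{A})$: indeed, for $a\in \mathcal{A}$ and $i\in I_{1}$ we have $ai\in I_{1}$ since $I_{1}$ is a left ideal, so $\rho(a)\rho(i)K\subset \rho(I_{1})K$, and boundedness of $\rho(a)$ extends this inclusion to the closure. By irreducibility of $\rho$, $V$ equals either $\{0\}$ or $K$; the first possibility would give $\rho(I_{1})=\{0\}$, contradicting our assumption, so $V=K$. But then
\[
\rho(I_{2})K \;=\; \rho(I_{2})V \;\subset\; \overline{\rho(I_{2})\rho(I_{1})K} \;=\; \{0\},
\]
forcing $\rho(I_{2})=\{0\}$, the desired contradiction.

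There is no real obstacle here; the only bookkeeping point is verifying that the closure of an invariant subspace remains invariant, which is immediate from the boundedness of operators in $\rho(\mathcal{A})$, and that $\rho(I_{2})$ may be pulled inside the closure in the last display, again by continuity. The argument uses neither unitality nor separability of $\mathcal{A}$, so the lemma holds in full generality.
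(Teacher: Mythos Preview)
Your proof is correct and shares the same contradiction skeleton as the paper's: both start from the observation that the product of the two kernel ideals lies in $\ker\pi\subset\ker\rho$, and then invoke irreducibility of $\rho$ to finish. The difference is in how irreducibility is used. The paper selects individual elements $a_i\in\ker\pi_i\setminus\ker\rho$, notes that $a_1\mathcal{A}a_2\subset\ker\pi\subset\ker\rho$, and then appeals to the weak-operator density of $\rho(\mathcal{A})$ in $B(H)$ to conclude $\rho(a_1)B(H)\rho(a_2)=\{0\}$, which is impossible for nonzero operators. You instead use the invariant-subspace definition of irreducibility directly: $\overline{\rho(I_1)K}$ is a nonzero closed invariant subspace, hence all of $K$, and then $\rho(I_2)$ annihilates it. Your route is slightly more elementary since it bypasses the density theorem; the paper's is a line shorter once that tool is taken for granted.
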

\begin{proof}
Suppose $\rho:\mathcal{A}\rightarrow B\left(  H\right)  $ is irreducible.
Assume, via contradiction, $a_{i}\in \ker \pi_{i}$ and $a_{i}\notin \ker \rho$ for
$i=1,2$. Then $a_{1}\mathcal{A}a_{2}\subseteq \ker \pi \subseteq \ker \rho$, so
$\rho \left(  a_{1}\right)  \rho \left(  \mathcal{A}\right)  \rho \left(
a_{2}\right)  =\left \{  0\right \}  ,$ and since $\rho \left(  \mathcal{A}%
\right)  ^{-wot}=B\left(  H\right)  ,$ we have $\rho \left(  a_{1}\right)
B\left(  H\right)  \rho \left(  a_{2}\right)  =\left \{  0\right \}  ,$ but
$\rho \left(  a_{1}\right)  \neq0\neq \rho \left(  a_{2}\right)  ,$ which is a contradiction.
\end{proof}

\begin{lemma}
\label{6}(Hoover \cite{Hoover}) If $\pi=\pi_{1}\oplus \pi_{2}$ is a direct sum
of unital representations of a unital C*-algebra $\mathcal{A}$, then
$\pi \left(  \mathcal{A}\right)  =\pi_{1}\left(  \mathcal{A}\right)  \oplus
\pi_{2}\left(  \mathcal{A}\right)  $ if and only if there is no irreducible
representation $\rho$ of $\mathcal{A}$ such that $\ker \pi_{1}\subset \ker \rho$
and $\ker \pi_{2}\subset \ker \rho$.
\end{lemma}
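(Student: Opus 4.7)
The plan is to translate the direct-sum condition into an algebraic condition on the ideals $\ker \pi_1$ and $\ker \pi_2$, and then invoke standard facts about primitive ideals. The key observation is that $\pi(\mathcal{A}) = \pi_1(\mathcal{A}) \oplus \pi_2(\mathcal{A})$ (as subalgebras of $\pi_1(\mathcal{A}) \oplus \pi_2(\mathcal{A})$) if and only if the central projection $(1,0)$ lies in $\pi(\mathcal{A})$. Since $\pi$ is unital, this is equivalent to the existence of $a \in \mathcal{A}$ with $\pi_1(a) = 1$ and $\pi_2(a) = 0$, i.e.\ $a \in \ker \pi_2$ and $1 - a \in \ker \pi_1$. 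Therefore the direct-sum decomposition holds precisely when $\ker \pi_1 + \ker \pi_2 = \mathcal{A}$.

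For the ``only if'' direction, suppose some irreducible $\rho$ satisfies $\ker \pi_i \subseteq \ker \rho$ for $i=1,2$. Then $\ker \pi_1 + \ker \pi_2 \subseteq \ker \rho$, and $\ker \rho \ne \mathcal{A}$ since $\rho$ is nonzero. Hence the direct-sum condition fails by the reformulation above.

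For the converse, assume $\ker \pi_1 + \ker \pi_2 \ne \mathcal{A}$. I would first invoke the standard fact that the sum of two closed two-sided ideals in a C*-algebra is closed: letting $q \colon \mathcal{A} \to \mathcal{A}/\ker \pi_1$ be the quotient map, the image $q(\ker \pi_2)$ is the range of a $*$-homomorphism on the C*-algebra $\ker \pi_2$, hence closed, and $\ker \pi_1 + \ker \pi_2 = q^{-1}(q(\ker \pi_2))$ is closed as the preimage of a closed set. Thus $\mathcal{B} := \mathcal{A}/(\ker \pi_1 + \ker \pi_2)$ is a nonzero unital C*-algebra. Pick any pure state $\phi$ on $\mathcal{B}$ (these exist by Hahn--Banach and Krein--Milman) and pull it back to a pure state $\widetilde{\phi}$ on $\mathcal{A}$. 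Its GNS representation $\rho$ is irreducible; because $\ker \pi_1 + \ker \pi_2$ is a two-sided ideal and $\widetilde{\phi}$ vanishes on it, one checks in the usual way that $\rho$ annihilates $\ker \pi_1 + \ker \pi_2$, so in particular $\ker \pi_i \subseteq \ker \rho$ for $i=1,2$, as required.

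There is no real obstacle; the two technical ingredients (closedness of the sum of two closed two-sided ideals in a C*-algebra, and the existence of an irreducible representation annihilating a given proper closed two-sided ideal) are classical, so the proof essentially reduces to the reformulation in terms of $\ker \pi_1 + \ker \pi_2$.
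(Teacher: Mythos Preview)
Your proof is correct and follows essentially the same strategy as the paper's proof: both reduce the question to the nonvanishing of a certain quotient C*-algebra and then take an irreducible representation of that quotient. The only cosmetic difference is that you work upstairs in $\mathcal{A}$ with the ideal $\ker\pi_1+\ker\pi_2$, while the paper works downstairs in the images via the ideals $\mathcal{J}_k=\{b\in\pi_k(\mathcal{A}):b\oplus 0\in\pi(\mathcal{A})\}=\pi_k(\ker\pi_{3-k})$; the two viewpoints are linked by the isomorphism $\mathcal{A}/(\ker\pi_1+\ker\pi_2)\cong\pi_k(\mathcal{A})/\mathcal{J}_k$, so your $\mathcal{B}$ is exactly the paper's $\mathcal{D}$.
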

We will give a proof of it for the reader's convenience.
\begin{proof}
Let $\mathcal{J}_{1}=\left \{  b\in \pi_{1}\left(  \mathcal{A}%
\right)  :b\oplus0\in \pi \left(  \mathcal{A}\right)  \right \}  $ and
$\mathcal{J}_{2}=\left \{  b\in \pi_{2}\left(  \mathcal{A}\right)  :0\oplus
b\in \pi \left(  \mathcal{A}\right)  \right \}  $. Clearly,

$\pi \left(  \mathcal{A}\right)  =\pi_{1}\left(  \mathcal{A}\right)  \oplus
\pi_{2}\left(  A\right)  $ if and only if $1\in \mathcal{J}_{1}$ if and only if
$1\in \mathcal{J}_{2}$ if and only if $1\oplus1\in \mathcal{J}$ (where
$\mathcal{J=J}_{1}\oplus \mathcal{J}_{2}$). Hence, if $\pi \left(
\mathcal{A}\right)  \neq \pi_{1}\left(  \mathcal{A}\right)  \oplus \pi
_{2}\left(  A\right)  $, $\mathcal{J}$ is a proper ideal of $\pi \left(
\mathcal{A}\right)  $ and $\mathcal{J}_{k}$ is a proper ideal of $\pi
_{k}\left(  \mathcal{A}\right)  $ for $k=1,2$. Moreover, if $\pi \left(
a\right)  =\pi_{1}\left(  a\right)  \oplus \pi_{2}\left(  a\right)  $, we have:

$\pi \left(  a\right)  \in \mathcal{J}$ if and only if $\pi_{1}\left(  a\right)
\in \mathcal{J}_{1}$ if and only if $\pi_{2}\left(  a\right)  \in
\mathcal{J}_{2}$. Thus the algebras $\pi \left(  \mathcal{A}\right)
/\mathcal{J}$, $\pi_{1}\left(  \mathcal{A}\right)  /\mathcal{J}_{1}$ and
$\pi_{2}\left(  \mathcal{A}\right)  /\mathcal{J}_{2}$ are isomorphic to a
unital C*-algebra $\mathcal{D}$. Thus there are surjective homomorphisms
$\rho:\pi \left(  \mathcal{A}\right)  \rightarrow \mathcal{D}$ and $\rho_{k}%
:\pi_{k}\left(  \mathcal{A}\right)  \rightarrow \mathcal{D}$ such that
\[
\rho \circ \pi=\rho_{1}\circ \pi_{1}=\rho_{2}\circ \pi_{2}.
\]
If we choose an irreducible representation $\gamma$ of $\mathcal{D}$ and
replace $\rho,\rho_{1},\rho_{2}$ with $\gamma \circ \rho,\gamma \circ \rho
_{1},\gamma \circ \rho_{2}$ we get irreducible representations.
\end{proof}

\begin{theorem}
\label{NonNuclearExample} There exists a $C^{*}$-algebra which satisfies the
conditions 1) and 2) of Theorem \ref{conditions} but is not matricially
tracially stable.
\end{theorem}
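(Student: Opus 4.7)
The strategy is to construct a finitely generated RFD $C^*$-algebra $\mathcal{A}$ which satisfies conditions (1) and (2) of Theorem~\ref{conditions}, carries an embeddable trace $\tau$ of free entropy dimension $> 1$, yet for which the number of unitary equivalence classes of representations into $M_k(\mathbb{C})$ grows only subexponentially in $k^2$; this will directly contradict Theorem~\ref{cardrep1}.

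For each $n \geq 2$, I will use Lemma~\ref{irreduciblepairsdense} together with a small perturbation to choose $U_n, V_n \in U(n)$ such that $C^*(U_n, V_n) = M_n(\mathbb{C})$ and the joint $*$-distribution of $(U_n, V_n)$ under $\tau_n$ converges \emph{strongly} to that of two free Haar unitaries $u, v$: for every $*$-polynomial $p$,
\[
\|p(U_n, V_n)\| \longrightarrow \|p(u, v)\|_{C^*_\lambda(F_2)}, \qquad \tau_n(p(U_n, V_n)) \longrightarrow \tau_{\mathrm{free}}(p(u, v)).
\]
By the Haagerup--Thorbj\o rnsen strong convergence theorem this is satisfied almost surely for independent Haar-random pairs. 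Set $\hat U := (U_n)_n$, $\hat V := (V_n)_n$ and $P := ((1/n)\,1_{M_n})_n$ in $\prod_n M_n(\mathbb{C})$, let $\mathcal{K} := \bigoplus_n M_n(\mathbb{C})$, and define $\mathcal{A} := C^*(\hat U, \hat V, P)$. Because the nonzero points of $\sigma(P) = \{0\} \cup \{1/n : n \geq 1\}$ are isolated, continuous functional calculus yields $1_{M_n} \in C^*(P) \subseteq \mathcal{A}$ for every $n$; hence $\mathcal{A} \supseteq \mathcal{K}$ and $\mathcal{A} = C^*(\hat U, \hat V) + \mathcal{K}$. Strong convergence identifies $\mathcal{A}/\mathcal{K}$ with the simple, infinite-dimensional algebra $C^*_\lambda(F_2)$, and $\mathcal{A}$ is RFD because the coordinate projections $\pi_n: \mathcal{A} \to M_n(\mathbb{C})$ jointly separate points.

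The crucial consequence is a classification of the $M_k$-representations of $\mathcal{A}$. Let $\rho: \mathcal{A} \to M_k(\mathbb{C})$ be unital and set $p_n := \rho(1_{M_n})$. These projections are pairwise orthogonal and central in $\rho(\mathcal{A})$, because each $1_{M_n}$ is central in $\prod_m M_m(\mathbb{C})$. By simplicity of $M_n$, $\rho|_{M_n}$ is an amplification of the identity on $p_n \mathbb{C}^k$, and since $\hat U \cdot 1_{M_n} = U_n$ and $\hat V \cdot 1_{M_n} = V_n$, one has $\rho(\hat U) p_n = U_n \otimes 1_{m_n}$ and $\rho(\hat V) p_n = V_n \otimes 1_{m_n}$ for some multiplicity $m_n \geq 0$. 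On the residual subspace $(1 - \sum_n p_n)\mathbb{C}^k$, $\rho$ factors through $\mathcal{A}/\mathcal{K} \cong C^*_\lambda(F_2)$, which admits no finite-dimensional representation; the residual subspace must therefore vanish, so $\sum_n n m_n = k$ and $\rho \cong \bigoplus_n \pi_n^{\oplus m_n}$ up to unitary equivalence. Consequently $\mathrm{Card}(\mathrm{Rep}(\mathcal{A}, k)/\simeq)$ is bounded above by the partition function $p(k)$, hence $\log \mathrm{Card}(\mathrm{Rep}(\mathcal{A}, k)/\simeq) = O(\sqrt{k})$.

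Conditions (1) and (2) of Theorem~\ref{conditions} are now immediate: the $\pi_n$'s provide $n$-dimensional representations for all $n \geq 2$, and applying the classification above to an arbitrary tracial state $\rho$ yields $\rho = \sum_n c_n (\tau_n \circ \pi_n) + (1 - \sum_n c_n)\tau$, where $\tau$ is the lift to $\mathcal{A}$ of the unique trace on $C^*_\lambda(F_2)$; since $\tau_n \circ \pi_n \to \tau$ in the weak-$*$ topology (by moment convergence), every tracial state lies in the weak-$*$-closed convex hull of the matricial states and is, in particular, embeddable. Under $\tau$ the tuple $(\hat U, \hat V, P)$ has the joint distribution of $(u, v, 0)$ in $L(F_2)$; since $(u, v)$ and $(u, v, 0)$ generate the same $*$-subalgebra, Voiculescu's monotonicity of $\delta_0$ gives $\delta_0(\hat U, \hat V, P; \tau) = \delta_0(u, v) = 2 > 1$. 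Matricial tracial stability of $\mathcal{A}$ would then force $\limsup_k \log \mathrm{Card}(\mathrm{Rep}(\mathcal{A}, k)/\simeq)/k^2 = \infty$ by Theorem~\ref{cardrep1}, contradicting the $O(\sqrt k)$ bound. The principal technical obstacle is the identification $\mathcal{A}/\mathcal{K} \cong C^*_\lambda(F_2)$: mere convergence of $*$-moments is insufficient (it only identifies the associated GNS von Neumann algebra as $L(F_2)$), and one genuinely needs strong norm convergence, i.e.\ the Haagerup--Thorbj\o rnsen theorem, to ensure that the quotient is simple and therefore admits no finite-dimensional representations.
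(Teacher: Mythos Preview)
Your argument is correct and follows essentially the same strategy as the paper: build an RFD algebra from a Haagerup--Thorbj\o rnsen sequence of irreducible unitary pairs, identify the quotient by $\mathcal{K}=\bigoplus_n M_n$ with $C^*_r(\mathbb{F}_2)$ via strong convergence, classify the finite-dimensional representations, and contradict Theorem~\ref{cardrep1} using $\delta_0=2$.

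The one notable technical difference is how you secure $\mathcal{K}\subset\mathcal{A}$. The paper takes $\mathcal{A}=C^*(U_\infty,V_\infty)$ with two generators and then proves $\mathcal{K}\subset\mathcal{A}$ via Hoover's lemma (Lemmas~\ref{5} and~\ref{6} and Claims~1--2): one shows each $\pi_n$ cannot factor through the remaining coordinates, hence the diagonal units $1_{M_n}$ already lie in $C^*(U_\infty,V_\infty)$. You instead adjoin a third generator $P=((1/n)1_{M_n})_n\in\mathcal{K}$ and get $1_{M_n}\in C^*(P)$ by elementary functional calculus, which is cleaner and avoids the Hoover-type argument entirely. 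The price is that you must then compute $\delta_0$ for the triple $(\hat U,\hat V,P)$ rather than the pair; since $P$ maps to $0$ under $\tau$, the algebraic invariance of $\delta_0$ (adding an element of the generated $*$-algebra leaves $\delta_0$ unchanged) gives $\delta_0(\hat U,\hat V,P;\tau)=\delta_0(u,v,0)=\delta_0(u,v)=2$, so no harm is done. Your representation count via partitions ($\log p(k)=O(\sqrt{k})$) is sharper than the paper's $k^k$ bound, but either suffices. The decomposition of an arbitrary trace as $\sum_n c_n(\tau_n\circ\pi_n)+(1-\sum_n c_n)\tau$ that you use for condition~(2) is correct and is exactly what the paper obtains from its identification of the extreme tracial states.
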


\begin{proof}
Let $C_{r}^{\ast}\left(  \mathbb{F}_{2}\right)  $ be the reduced free group
C*-algebra. Then $C_{r}^{\ast}\left(  \mathbb{F}_{2}\right)  $ has a unique
tracial state $\tau$ and two canonical unitary generators $U,V$. D. Voiculescu
\cite{V1} proved that
\[
\delta_{0}\left(  U,V;\tau \right)  =2.
\]
It was also shown by U. Haagerup and S. Thorbj\o rnsen
\cite{HaagerupThorbjornsen} that  there are sequences $\left \{  U_{k}\right \}
,$ $\left \{  V_{k}\right \}  $ of unitary matrices with $U_{k},V_{k}%
\in \mathcal{M}_{k}\left(  \mathbb{C}\right)  $ for each $k\in \mathbb{N}$ such
that, for every $\ast$-polynomial $p\left(  x,y\right)  $%
\begin{equation}
\label{HaagerupThor1}\lim_{k\rightarrow \infty}\left \Vert p\left(  U_{k}%
,V_{k}\right)  \right \Vert =\left \Vert p\left(  U,V\right)  \right \Vert .
\end{equation}
It follows from the uniqueness of a trace on $C_{r}^{\ast}\left(  \mathbb{F}_{2}\right)  $ that, for every $\ast
$-polynomial $p\left(  s,t\right)  $,
\begin{equation}
\label{HaagerupThor2}\lim_{k\rightarrow \infty}\tau_{k}\left(  p\left(
U_{k},V_{k}\right)  \right)  =\tau \left(  p\left(  U,V\right)  \right) .
\end{equation}
By Lemma \ref{irreduciblepairsdense} we can assume that each pair $\left(
U_{k},V_{k}\right)  $ is irreducible, i.e., C*$\left(  U_{k},V_{k}\right)
=\mathcal{M}_{k}\left(  \mathbb{C}\right)  $. Let $U_{\infty}=U_{1}\oplus
U_{2}\oplus \cdots$ and $V_{\infty}=V_{1}\oplus V_{2}\oplus \cdots$ , and let
$\mathcal{A}=C^{\ast}\left(  U_{\infty},V_{\infty}\right)  $. Clearly
$\mathcal{A}$ is an RFD C*-algebra. Moreover, for each $k\in \mathbb{N}$,
$\mathcal{A}$ has, up to unitary equivalence, exactly one irreducible
representation $\pi_{k}$ of dimension $k$, namely the one with $\pi_{k}\left(
U_{\infty}\right)  =U_{k}$ and $\pi_{k}\left(  V_{\infty}\right)  =V_{k}$.
Since each $k$-dimensional representation of $\mathcal{A}$ is unitarily
equivalent to a direct sum of at most $k$ irreducible representations of
dimension at most $k,$ it follows that%
\[
\mathrm{Card}\left(  \mathrm{Rep}\left(  \mathcal{A},k\right)  /\backsimeq
\right)  \leq k^{k}.
\]
Thus
\[
\limsup_{k\rightarrow \infty}\frac{\log \mathrm{Card}\left(  \mathrm{Rep}\left(
\mathcal{A},k\right)  /\backsimeq \right)  }{k^{2}}\leq \limsup_{k\rightarrow
\infty}\frac{\log k}{k}=0.
\]
However, there is a unital $\ast$-homomorphism $\pi:\mathcal{A}\rightarrow
C_{r}^{\ast}\left(  \mathbb{F}_{2}\right)  $ such that $\pi \left(  U_{\infty
}\right)  =U$ and $\pi \left(  V_{\infty}\right)  =V$. Thus $\tau \circ \pi$ is
an embeddable tracial state on $\mathcal{A}$ and since%
\[
\delta_{0}\left(  U_{\infty},V_{\infty};\tau \circ \pi \right)  =\delta
_{0}\left(  U,V;\tau \right)  =2,
\]
it follows from Theorem \ref{cardrep1} that $\mathcal{A}$ is not matricially
tracially stable.

\textbf{Claim 1}: $\pi_{n}$ does not factor through $\sum_{0\leq k\neq
n<\infty}^{\oplus}\pi_{k}$.

Proof: Assume, via contradiction, the claim is false. Since $(U_{n}, V_{n})$
is an irreducible pair for $n\in \mathbb{N}$, it follows that $\pi_{n}$ does
not factor through $\sum_{0\leq k<N, k \neq n}^{\oplus}\pi_{k}$ for each
positive integer $N$. Hence, by Lemma \ref{5}, it follows that $\pi_{n}$
factors through $\sum_{N\leq k\neq n<\infty}^{\oplus}\pi_{k}$, for each
positive integer $N$. However, since by (\ref{HaagerupThor1}), for every
$a\in \mathcal{A}$, we have%
\[
\left \Vert \pi_{k}\left(  a\right)  \right \Vert \rightarrow \left \Vert
\pi \left(  a\right)  \right \Vert ,
\]
we see that, for every $a\in \mathcal{A}$,%
\[
\left \Vert \pi_{n}\left(  a\right)  \right \Vert \leq \left \Vert \pi \left(
a\right)  \right \Vert .
\]
This means that $\pi_{n}$ factors through $\pi,$ which contradicts the fact
that $C_{r}^{\ast}\left(  \mathbb{F}_{2}\right)  $ has no finite-dimensional
representations. Thus Claim 1 must be true.

\textbf{Claim 2}: $\mathcal{J}=\sum_{k=1}^{\infty}\mathcal{M}_{k}\left(
\mathbb{C}\right)  \subset \mathcal{A}. $

Proof: It is sufficient to show that the sequence $\left \{
\begin{array}
[c]{cc}%
Id_{k} & \text{if }k=n\\
0 & \text{otherwise}%
\end{array}
\right.  $ belongs to $\mathcal{A}$. Since $id_{\mathcal{A}} = \oplus \pi_{k}$,
the claim follows from Claim 1 and Lemma \ref{6}.

\medskip \noindent Clearly $\mathcal{A}/\mathcal{J}$ is isomorphic to
$C^{*}_{r}\left(  \mathbb{F}_{2}\right)  .$ Thus any factor representation
of $\mathcal{A}$ must either be finite-dimensional or be factorable through
the representation $\pi.$ Since the extreme tracial states are the factor
tracial states \cite{DonExtreme}, we see that the extreme tracial states on
$\mathcal{A}$ are%
\[
\left \{  \tau \circ \pi \right \}  \cup \left \{  \tau_{k}\circ \pi_{k}%
:k\in \mathbb{N}\right \}  .
\]
By (\ref{HaagerupThor2}) we see that both conditions in Theorem
\ref{conditions} are satisfied, but $\mathcal{A}$ is not matricially tracially stable.
\end{proof}

\section{$C^{*}$-tracial stability}

The following lemma must be very well known. We give a proof of it because of
lack of convenient references.

\begin{lemma}
\label{Lebesgue}Suppose $\left(  X,d\right)  $ is a compact metric space and
$\tau$ is a state on $C\left(  X\right)  $. Let $\sigma$ be the state on
$C\left[  0,1\right]  $ given by%
\[
\sigma \left(  f\right)  =\int_{0}^{1}f\left(  t\right)  dt.
\]
Then, for any non-trivial ultrafilter $\alpha$ on $\mathbb{N}$ there is a
unital $\ast$-homomorphism $\pi:C\left(  X\right)  \rightarrow%
{\displaystyle \prod_{n\in \mathbb{N}}^{\alpha}}
\left(  C\left[  0,1\right]  ,\sigma \right) ,$ such that $\sigma_{\alpha}%
\circ \pi= \tau.$
\end{lemma}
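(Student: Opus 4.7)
By the Riesz representation theorem, $\tau$ corresponds to integration against a regular Borel probability measure $\mu$ on $X$. My plan is to build $\pi$ as a composition of three trace-preserving unital $\ast$-homomorphisms
\[
C(X) \xrightarrow{\;j_1\;} L^\infty(X,\mu) \xrightarrow{\;j_2\;} L^\infty([0,1],\mathrm{Leb}) \xrightarrow{\;j_3\;} \prod_{n\in\mathbb{N}}^{\alpha}\bigl(C[0,1],\sigma\bigr),
\]
and then verify $\sigma_\alpha\circ\pi=\tau$ one factor at a time.

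The first map $j_1$ is the canonical inclusion, and it is trace-preserving by the very definition of $\mu$. For the second, I will decompose $\mu=\mu_a+\mu_c$ into its atomic and continuous parts. I enumerate the atoms $\{x_i\}$ of $\mu_a$ with masses $c_i=\mu(\{x_i\})$, assign to each a disjoint subinterval $I_i\subset[0,1]$ of length $c_i$, and keep a residual interval $J\subset[0,1]$ of length $\mu_c(X)$. Since $X$ is a compact metric space, $(X,\mu_c/\mu_c(X))$ is a non-atomic standard probability space, so by the Carath\'eodory isomorphism theorem its measure algebra is isomorphic to that of $J$ with normalized Lebesgue measure. Assembling the atomic piece (supported on $\bigsqcup_i I_i$) with this non-atomic piece (supported on $J$) gives the desired trace-preserving embedding $j_2$.

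For the third map, I will use the fact that every $f\in L^\infty([0,1],\mathrm{Leb})$ admits, by Lusin's theorem, a sequence $\{f_n\}\subset C[0,1]$ with $\|f_n\|_\infty\le\|f\|_\infty$ and $\|f_n-f\|_2\to 0$. I set $j_3(f)=\{f_n\}_\alpha$; this is well-defined because any two such sequences differ by a sequence of vanishing $2$-norm, hence give the same ultraproduct class. The resulting map is unital and $\ast$-preserving by inspection, and multiplicative by the routine estimate
\[
\|f_n g_n - fg\|_2 \le \|f_n\|_\infty\,\|g_n-g\|_2 + \|g\|_\infty\,\|f_n-f\|_2 \xrightarrow[n\to\infty]{}0.
\]
It is trace-preserving since $\sigma_\alpha(\{f_n\}_\alpha)=\lim_{n\to\alpha}\int_0^1 f_n\,dt=\int_0^1 f\,dt$.

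Composing yields $\pi=j_3\circ j_2\circ j_1$ with $\sigma_\alpha\circ\pi=\tau$. The only step requiring any care is the atomic/non-atomic splitting used in $j_2$; the rest is routine measure theory combined with the $L^2$-density of $C[0,1]$ in $L^\infty([0,1],\mathrm{Leb})$, so I expect no serious obstacle.
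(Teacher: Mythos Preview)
Your proof is correct, but it takes a different route from the paper's. The paper avoids the measure-algebra isomorphism theorem entirely: it first invokes the cited fact from \cite{Hadwin-Li} that $\prod^\alpha(C[0,1],\sigma)=\prod^\alpha(L^\infty[0,1],\sigma)$, and then builds an explicit sequence of unital $\ast$-homomorphisms $\pi_n:C(X)\to L^\infty[0,1]$ by choosing, for each $n$, a Borel partition $\{E_{n,1},\ldots,E_{n,k_n}\}$ of $X$ into sets of small diameter, matching each $E_{n,j}$ with an interval $I_{n,j}\subset[0,1]$ of Lebesgue length $\mu(E_{n,j})$, and setting $\pi_n(f)=\sum_j f(x_{n,j})\chi_{I_{n,j}}$. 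The map $\pi(f)=\{\pi_n(f)\}_\alpha$ then does the job, and the argument treats atomic and continuous parts of $\mu$ uniformly.

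Your approach is more structural: your $j_3$ is essentially a self-contained proof (via Lusin) of the inclusion $L^\infty[0,1]\hookrightarrow\prod^\alpha(C[0,1],\sigma)$ that the paper takes as a black box, while your $j_2$ replaces the paper's elementary partition trick with the Carath\'eodory isomorphism theorem for standard non-atomic probability spaces, at the cost of having to split off the atomic part of $\mu$ by hand. Both arguments are clean; the paper's is slightly more elementary (no classification theorem needed), whereas yours makes the three conceptual steps---embed $C(X)$ in $L^\infty(X,\mu)$, identify $L^\infty(X,\mu)$ with $L^\infty[0,1]$, embed $L^\infty[0,1]$ in the ultraproduct---completely transparent.
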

\begin{proof}
We know from \cite{Hadwin-Li} that $%
{\displaystyle \prod_{n\in \mathbb{N}}^{\alpha}}
\left(  C\left[  0,1\right]  , \sigma \right)  $ is a von Neumann algebra and
that $%
{\displaystyle \prod_{n\in \mathbb{N}}^{\alpha}}
\left(  C\left[  0,1\right]  ,\sigma \right)  =%
{\displaystyle \prod_{n\in \mathbb{N}}^{\alpha}}
\left(  L^{\infty}\left[  0,1\right]  ,\sigma \right)  $. We know that there is
a probability Borel measure $\mu$ on $X$ such that, for every $f\in C\left(
X\right)  $,%
\[
\tau \left(  f\right)  =\int_{X}fd\mu \text{.}%
\]
Choose a dense subset $\left \{  f_{1},f_{2},\ldots \right \}  $ of $C\left(
X\right)  $. For each $n\in \mathbb{N}$ we can find a Borel partition $\left \{
E_{n,1},\ldots,E_{n,k_{n}}\right \}  $ of $X$ so that each $E_{n,j}$ has
sufficiently small diameter and points $x_{n,j}\in E_{n,j}$ for $1\leq j\leq
k_{n}$ such that, for $1\leq m\leq n$ and for every $x\in X$,%
\[
\left \vert f_{m}\left(  x\right)  -\left(  \sum_{j=1}^{k_{n}}f_{m}\left(
x_{n,j}\right)  \chi_{E_{n,j}}\right)  \left(  x\right)  \right \vert \leq
\frac{1}{n}.
\]
For each $n\in \mathbb{N}$ we can partition $\left[  0,1\right]  $ into
intervals $\left \{  I_{n,j}:1\leq j\leq k_{n}\right \}  $ so that%
\[
\sigma \left(  \chi_{I_{n,j}}\right)  =\mu \left(  E_{n,j}\right)  .
\]
For each $n\in \mathbb{N}$ we define a unital $\ast$-homomorphism $\pi
_{n}:C\left(  X\right)  \rightarrow L^{\infty}\left[  0,1\right]  $ by%
\[
\pi_{n}\left(  f\right)  =\sum_{j=1}^{k_{n}}f\left(  x_{n,j}\right)
\chi_{I_{n,j}}.
\]
We define $\pi:C\left(  X\right)  \rightarrow%
{\displaystyle \prod_{n\in \mathbb{N}}^{\alpha}}
\left(  L^{\infty}\left[  0,1\right]  ,\sigma \right)  $ by%
\[
\pi \left(  f_{m}\right)  =\left \{  \pi_{n}\left(  f_{m}\right)  \right \}
_{\alpha}%
\]
It is clear, for $m\in \mathbb{N}$, that%
\[
\sigma_{\alpha}\left(  \pi \left(  f_{m}\right)  \right)  =\lim_{n\rightarrow
\alpha}\sigma \left(  \pi_{n}\left(  f_{m}\right)  \right)  =\lim
_{n\rightarrow \alpha}\sum_{j=1}^{k_{n}}f_{m}\left(  x_{n,j}\right)
\mu({E_{n,j})}=\tau \left(  f_{m}\right)  \text{.}%
\]
Since $\left \{  f_{1},f_{2},\ldots \right \}  $ is dense in $C\left(  X\right)
$, we see that $\tau=\sigma_{\alpha}\circ \pi$.
\end{proof}

\bigskip

We say that a topological space $X$ is \emph{approximately path-connected} if,
for each collection $\left \{  V_{1},\ldots,V_{s}\right \}  $ of nonempty open
subsets of $X$ there is a continuous function $\gamma:\left[  0,1\right]
\rightarrow X$ such that%
\[
\gamma \left[  \left(  0,1\right)  \right]  \cap V_{k}\neq \varnothing
\]
for $1\leq k\leq s.$

Equivalently, traversing the above path $\gamma$ back and forth a finite
number of times, given $0<a_{1}<b_{1}<\cdots<a_{s}<b_{s}<1$ we can find a path
$\gamma$ such that%
\[
\gamma \left(  \left(  a_{k},b_{k}\right)  \right)  \subset V_{k}%
\]
for $1\leq k\leq s$. Indeed, we first find a path $\gamma_{1}$ and
$0<c_{1}<d_{1}<\cdots<c_{s}<d_{s}<1$ such that
\[
\gamma_{1}\left(  \left(  c_{k},d_{k}\right)  \right)  \subset V_{k}%
\]
for $1\leq k\leq s$, and then compose $\gamma_{1}$ with a homeomorphism on
$\left[  0,1\right]  $ sending $a_{k}$ to $c_{k}$ and $b_{k}$ to $d_{k}$ for
$1\leq k\leq s.$

To prove that $X$ is approximately path-connected when $X$ is Hausdorff, it
clearly suffices to restrict to the case in which $\left \{  V_{1},\ldots
,V_{s}\right \}  $ is disjoint (Choose $x_{k}\in V_{k}$ for each $k$ and chose
an open set $W_{k}\subseteq V_{k}$ with $x_{k}\in W_{k}$ such that
$x_{k}=x_{j}\Rightarrow W_{j}=W_{k}$ and such that $\left \{  W_{k}:1\leq k\leq
s\right \}  $ (without repetitions) is disjoint, then consider $\left \{
W_{k}:1\leq k\leq s\right \}  $.)

The following facts are elementary:

\begin{enumerate}
\item Every approximately path-connected space is connected

\item A continuous image of an approximately path-connected space is
approximately path-connected

\item A cartesian product of approximately path-connected spaces is
approximately path-connected.
\end{enumerate}

An interesting example of a compact approximately path connected metric space
in $\mathbb{R}^{2}$ is
\[
A=\left \{  \left(  x,\sin \left(  \frac{1}{x}\right)  \right)  :0<x\leq
1\right \}  \cup \left \{  \left(  0,y\right)  :1-\leq y\leq1\right \}  .
\]
Note that $A\cup B$ with $B=\left \{  \left(  x,0\right)  :-1\leq
x\leq0\right \}  $ is not approximately path-connected. In particular, $A$ and
$B$ are approximately path-connected and $A\cap B\neq \varnothing$, but $A\cup
B$ is not approximately path-connected.

For compact Hausdorff spaces we have a characterization of approximate
path-connectedness which later will be used to characterize $C^{*}$-tracial
stability for commutative $C^{*}$-algebras.

\begin{theorem}
\label{ApprPathConnectedEquivalentConditions} Suppose $X$ is a compact
Hausdorff space. The following are equivalent:

\begin{enumerate}
\item $X$ is approximately path-connected.

\item If $\mathcal{A}$ is a unital C*-algebra, $\mathcal{B}$ is an
AW*-algebra, $\eta:\mathcal{A}\rightarrow \mathcal{B}$ is a surjective unital
$\ast$-homomorphism, and $\pi:C\left(  X\right)  \rightarrow \mathcal{B}$ is a
unital $\ast$-homomorphism, then there is is a net $\left \{  \pi_{\lambda
}\right \}  $ of unital $\ast$-homomorphisms from $C\left(  X\right)  $ to
$\mathcal{A}$ such that, for every $f\in C\left(  X\right)  ,$%
\[
\left(  \eta \circ \pi_{\lambda}\right)  \left(  f\right)  \rightarrow \pi \left(
f\right)
\]
in the weak topology on $\mathcal{B}$.

\item If $\mathcal{A}$ is a unital C*-algebra, $\mathcal{B}$ is an W*-algebra,
$\eta:\mathcal{A}\rightarrow \mathcal{B}$ is a surjective unital $\ast
$-homomorphism, and $\pi:C\left(  X\right)  \rightarrow \mathcal{B}$ is a
unital $\ast$-homomorphism, then there is is a net $\left \{  \pi_{\lambda
}\right \}  $ of unital $\ast$-homomorphisms from $C\left(  X\right)  $ to
$\mathcal{A}$ such that, for every $f\in C\left(  X\right)  ,$%
\[
\left(  \eta \circ \pi_{\lambda}\right)  \left(  f\right)  \rightarrow \pi \left(
f\right)
\]
in the ultra*-strong topology on $\mathcal{B}$.

\item For every regular Borel probability measure $\nu$ on $X,$ there is a net
$\left \{  \rho_{\omega}\right \}  $ of unital $\ast$-homomorphisms from
$C\left(  X\right)  $ into $C\left[  0,1\right]  $ such that, for every $f\in
C\left(  X\right)  ,$%
\[
\int_{X}fd\nu=\lim_{\omega}\int_{0}^{1}\rho_{\omega}\left(  f\right)  \left(
x\right)  dx.
\]

\end{enumerate}
\end{theorem}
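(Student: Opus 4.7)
I would aim for the circuit $(1)\Leftrightarrow(4)$, $(1)\Rightarrow(2)\Rightarrow(3)\Rightarrow(4)$. The equivalence $(1)\Leftrightarrow(4)$ is pure Gelfand duality: a unital $\ast$-homomorphism $\rho:C(X)\to C[0,1]$ is exactly $\rho(f)=f\circ\gamma$ for some continuous $\gamma:[0,1]\to X$, and then $\int_0^1\rho(f)(t)\,dt=\int_0^1 f(\gamma(t))\,dt$. For $(4)\Rightarrow(1)$, given nonempty open $V_1,\ldots,V_s$ with $x_k\in V_k$, pick bumps $\phi_k\in C(X)$ with $\phi_k(x_k)=1$ and $\mathrm{supp}(\phi_k)\subset V_k$, and apply $(4)$ with $\nu=\tfrac{1}{s}\sum_k\delta_{x_k}$; the resulting paths $\gamma_\omega$ must eventually have $\int_0^1\phi_k\circ\gamma_\omega>0$, forcing $\gamma_\omega([0,1])\cap V_k\neq\varnothing$ for every $k$. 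For $(1)\Rightarrow(4)$ I partition $X$ into small Borel sets $E_k\subset V_k$ with each $f_j$ oscillating by at most $\varepsilon$ on $V_k$, pick $x_k\in E_k$, and use approximate path-connectedness to build $\gamma$ with $\gamma([a_k,b_k])\subset V_k$ and $b_k-a_k=\nu(E_k)$ (absorbing the tiny leftover mass into the joining intervals); then $\int_0^1 f_j\circ\gamma\approx\sum_k f_j(x_k)\nu(E_k)\approx\int f_j\,d\nu$.

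The implication $(2)\Rightarrow(3)$ is formal. A W*-algebra is an AW*-algebra, so $(2)$ yields $\eta\pi_\lambda(f)\to\pi(f)$ in $\sigma(\mathcal{B},\mathcal{B}^*)$, hence in $\sigma(\mathcal{B},\mathcal{B}_*)$. For a normal state $\phi$, expand
\[
\phi\bigl((\eta\pi_\lambda(f)-\pi(f))^*(\eta\pi_\lambda(f)-\pi(f))\bigr);
\]
the diagonal term $\phi(\eta\pi_\lambda(f^*f))$ tends to $\phi(\pi(f^*f))$ by ultraweak convergence, and the cross term $\phi(\eta\pi_\lambda(f)^*\pi(f))$ tends to $\phi(\pi(f)^*\pi(f))$ because right-multiplication by the fixed element $\pi(f)$ sends normal functionals to normal functionals. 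This gives ultra-strong convergence, and applying it to $f^*$ gives the ultra-$\ast$-strong version. For $(3)\Rightarrow(4)$, fix a Borel probability $\nu$ on $X$ and a free ultrafilter $\alpha$ on $\mathbb{N}$. Lemma \ref{Lebesgue} produces $\pi:C(X)\to\mathcal{B}:=\prod_n^\alpha(C[0,1],\sigma)$ with $\sigma_\alpha\circ\pi=\tau_\nu$. Since $\mathcal{B}$ is a W*-algebra and is the quotient of $\mathcal{A}:=\ell^\infty(\mathbb{N},C[0,1])$ by the tracial-null ideal, $(3)$ yields coordinate $\ast$-homomorphisms $\pi_\lambda^n:C(X)\to C[0,1]$ with $\sigma_\alpha(\eta\pi_\lambda(f))\to\int f\,d\nu$. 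Choosing $n_\lambda$ so that $\int_0^1\pi_\lambda^{n_\lambda}(f)\,dt$ tracks the ultralimit along $\alpha$ produces the net demanded by $(4)$.

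The substantive step is $(1)\Rightarrow(2)$. My strategy is to approximate $\pi:C(X)\to\mathcal{B}$ in $\sigma(\mathcal{B},\mathcal{B}^*)$ by $\ast$-homomorphisms that factor as $C(X)\xrightarrow{f\mapsto f\circ\gamma}C[0,1]\xrightarrow{\psi}\mathcal{B}$, and then lift each such factorization through $\eta$. The lifting half is cheap: unital $\ast$-homs $C[0,1]\to\mathcal{B}$ correspond to positive contractions, and any positive contraction in $\mathcal{B}$ lifts to a positive contraction in $\mathcal{A}$ (cap a self-adjoint preimage by continuous functional calculus with a map that is the identity on $[0,1]$); so each $\psi$ lifts to $\tilde\psi:C[0,1]\to\mathcal{A}$ and $\pi_\lambda:=\tilde\psi\circ(f\mapsto f\circ\gamma)$ is the desired lift of $\psi\circ(f\mapsto f\circ\gamma)$. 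To produce the factored approximation, fix finitely many $\phi_1,\ldots,\phi_m\in\mathcal{B}^*$ and $f_1,\ldots,f_n\in C(X)$; each restriction $\phi_i|_{\pi(C(X))}$ corresponds via Riesz to a complex Radon measure $\mu_i$ on $X$. I take a fine open cover $\{V_1,\ldots,V_s\}$, use approximate path-connectedness to build $\gamma$ with $\gamma([a_k,b_k])\subset V_k$, and construct a positive contraction $h\in\mathcal{B}$ whose spectral projection on $[a_k,b_k]$ equals the partition projection in the commutative AW*-closure of $\pi(C(X))$ associated with a Borel refinement of the cover; defining $\psi(g):=g(h)$ gives $\psi(f\circ\gamma)\approx\pi(f)$ against each $\phi_i$.

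The hard part is the simultaneous matching: choosing one $\gamma$ together with one $h$ so that $\phi_i(\psi(f_j\circ\gamma))$ is close to $\phi_i(\pi(f_j))$ for all $i,j$ at once. Polar-decomposing reduces this to finitely many positive measures $|\mu_i|$, and the flexibility of approximate path-connectedness (the path can be made to spend prescribed amounts of time in each $V_k$, possibly by traversing the cover several times) lets me tune the interval lengths $b_k-a_k$, summed over all visits of $\gamma$ to $V_k$, to match the $|\mu_i|$-masses of the partition cells simultaneously. The AW*-hypothesis enters here precisely to convert the continuous partition of unity subordinate to $\{V_k\}$ into honest mutually orthogonal projections inside $\pi(C(X))^{\prime\prime}\subset\mathcal{B}$, which then play the role of spectral projections of $h$.
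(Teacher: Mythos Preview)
Your circuit and the direct argument for $(1)\Leftrightarrow(4)$ are fine, and your treatments of $(2)\Rightarrow(3)$ and $(3)\Rightarrow(4)$ match the paper's essentially verbatim. The strategy for $(1)\Rightarrow(2)$---factor approximately through $C[0,1]$ via a path $\gamma$ and a positive contraction $h\in\mathcal{B}$, then lift $h$---is also exactly what the paper does. The problem is your final paragraph, where you misidentify the difficulty and propose a solution that cannot work.

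You say the hard part is to ``tune the interval lengths $b_k-a_k$ \ldots\ to match the $|\mu_i|$-masses of the partition cells simultaneously.'' But with $m$ states and $s$ cells this is $m(s-1)$ constraints on essentially $s$ parameters; there is no reason it can be done, and in fact it is unnecessary. The real point is that once $h$ is chosen correctly, $(f\circ\gamma)(h)$ is close to $\pi(f)$ \emph{as an element} of $C^*(h)$ in an $L^1$ sense against the single measure obtained by summing all the state-measures; this automatically controls every individual state at once. The interval lengths play no role in this estimate---what matters is only that $\gamma$ sends each interval into the right small neighborhood of $X$.

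The paper executes this as follows. Using that maximal abelian subalgebras of an AW*-algebra are generated by projections, it embeds the separable algebra $\pi(C^*(E_\lambda))$ into $C^*(\mathcal{P}_\lambda)$ for a countable family of commuting projections, and then applies von~Neumann's theorem to write $C^*(\mathcal{P}_\lambda)=C^*(w_\lambda)$ for a single self-adjoint $w_\lambda$ with totally disconnected spectrum in $(0,1)$. This gives $\pi(f)=h_f(w_\lambda)$ with $h_f\in C(\sigma(w_\lambda))$, so your vague ``partition projections in the commutative AW*-closure'' become concrete spectral projections $\chi_{(a_j,b_j)}(w_\lambda)$, and the correspondence with points of $X$ is forced by Gelfand duality for $\pi:C(X_\lambda)\to C(\sigma(w_\lambda))$. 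One then chooses the intervals $(a_j,b_j)$ so that the \emph{extended} functions $\hat h_f$ oscillate little on each and the complement has small total mass for the sum measure $\mu_\lambda=\sum_{\psi\in S_\lambda}\psi$; approximate path-connectedness produces $\gamma$ with $\gamma((a_j,b_j))$ in a small neighborhood of the corresponding point of $X$, and the estimate $|\psi(\pi(f))-\psi((f\circ\gamma)(w_\lambda))|\le\int|\hat h_f-f\circ\gamma|\,d\mu_\lambda$ finishes the job for every $\psi\in S_\lambda$ simultaneously. Replace your last paragraph with this mechanism and the proof goes through.
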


\begin{proof}
$\left(  1\right)  \Rightarrow \left(  2\right)  $. Suppose $X$ is
approximately path-connected and suppose $\mathcal{A},\mathcal{B},\eta,$ and
$\pi$ are is in $\left(  2\right)  $. Let $\Lambda$ be the collection of all
tuples of the form $\lambda=\left(  S_{\lambda},E_{\lambda},\varepsilon
_{\lambda}\right)  $ where $S_{\lambda}$ is a finite set of states on
$\mathcal{B}$, $E_{\lambda}$ is a finite subset of $C\left(  X\right)  $ of
functions from $X$ to $\left[  0,1\right]  $, and $\varepsilon_{\lambda}>0$.
Suppose $\lambda \in \Lambda$. Clearly, $\pi \left(  C\left(  X\right)  \right)
$ is an abelian selfadjoint C*-subalgebra of $\mathcal{B}$, and therefore
there is a maximal abelian C*-subalgebra $\mathcal{M}$ of $\mathcal{B}$ such
that $\pi \left(  C\left(  X\right)  \right)  \subseteq \mathcal{M}$. Since
$\mathcal{B}$ is an AW*-algebra, there is a commuting family $\mathcal{P}$ of
projections in $\mathcal{B}$ such that $\mathcal{M}=C^{\ast}\left(
\mathcal{P}\right)  $. Since $C^{\ast}\left(  E_{\lambda}\right)  $ is a
separable unital C*-subalgebra of $C\left(  X\right)  $, the maximal ideal
space of $C^{\ast}\left(  E_{\lambda}\right)  $ is a compact metric space
$X_{\lambda}$ and there is a surjective continuous function $\zeta_{\lambda
}:X\rightarrow X_{\lambda}$. It follows that $X_{\lambda}$ is approximately
path-connected. Note that if we identify $C^{\ast}\left(  E_{\lambda}\right)
$ with $C\left(  X_{\lambda}\right)  $ we can view a function $f\in C^{\ast
}\left(  E_{\lambda}\right)  $ as both a function on $X$ and on $X_{\lambda}$
and we have $f=f\circ \zeta_{\lambda}$. Now $\pi \left(  C^{\ast}\left(
E_{\lambda}\right)  \right)  =\pi \left(  C\left(  X_{\lambda}\right)  \right)
$ is a separable subalgebra of $C^{\ast}\left(  \mathcal{P}\right)  $ so there
is a countable subset $\mathcal{P}_{\lambda}\subseteq \mathcal{P}$ such that
$\pi \left(  C\left(  X_{\lambda}\right)  \right)  \subseteq C^{\ast}\left(
\mathcal{P}_{\lambda}\right)  $. If follows from von Neumann's argument that
there is a $w_{\lambda}=w_{\lambda}^{\ast}$ with $\sigma \left(  w_{\lambda
}\right)  $ is a totally disconnected subset of $\left(  0,1\right)  $ such
that $C^{\ast}\left(  w_{\lambda}\right)  =C^{\ast}\left(  \mathcal{P}%
_{\lambda}\right)  $. Let $\psi_{\lambda}=\sum_{\psi \in S_{\lambda}}\psi$ and
let $n_{\lambda}= \sharp(S_{\lambda})$. Then there is a measure $\mu_{\lambda
}$ on $\sigma \left(  w_{\lambda}\right)  $ such that, for every $h\in C\left(
\sigma \left(  w_{\lambda}\right)  \right)  ,$ we have%
\[
\int_{\sigma \left(  w_{\lambda}\right)  }hd\mu_{\lambda}=\psi_{\lambda}\left(
h\left(  w_{\lambda}\right)  \right)  .
\]
For each $f\in E_{\lambda}$ there is a function $h_{f}\in C\left(
\sigma \left(  w_{\lambda}\right)  \right)  $ such that $\pi \left(  f\right)
=h_{f}\left(  w_{\lambda}\right)  $ and there is a function $\hat{h}_{f}\in
C\left[  0,1\right]  $ with $0\leq \hat{h}_{f}\leq1$ such that $\hat{h}%
_{f}|_{\sigma \left(  w_{\lambda}\right)  }=h_{f}$. We can view $\mu_{\lambda}$
as a Borel measure on $\left[  0,1\right]  $ by defining $\mu_{\lambda}\left(
\left[  0,1\right]  \backslash \sigma \left(  w_{\lambda}\right)  \right)  =0$.
Clearly, $\mu_{\lambda}\left(  \left[  0,1\right]  \right)  =\psi_{\lambda
}\left(  1\right)  =n_{\lambda}$. Since $\left \{  \hat{h}_{f}:f\in E_{\lambda
}\right \}  $ is equicontinuous and $\left[  0,1\right]  \backslash
\sigma \left(  w_{\lambda}\right)  $ is dense in $\left[  0,1\right]  $, we can
find $0<a_{1}<b_{1}<a_{2}<b_{2}<\cdots<a_{s}<b_{s}<1$ and $r_{1},\ldots
,r_{s}\in \sigma \left(  w_{\lambda}\right)  $ such that, for every $f\in E$ and
$1\leq j\leq s$,%
\[
\left \vert \hat{h}_{f}\left(  t\right)  -\hat{h}_{f}\left(  r_{j}\right)
\right \vert <\varepsilon_{\lambda}/4n_{\lambda}\text{ when }t\in \left(
a_{j},b_{j}\right)
\]
and such that
\[
\mu_{\lambda}\left(  \left[  0,1\right]  \backslash \cup_{j=1}^{s}\left(
a_{j},b_{j}\right)  \right)  <\varepsilon_{\lambda}/4.
\]
We know $\hat{h}_{f}\left(  r_{j}\right)  =h_{f}\left(  r_{j}\right)  $ for
all $f\in E_{\lambda}$ and $1\leq j\leq s$. Since $\pi:C\left(  X_{\lambda
}\right)  \rightarrow C\left(  \sigma \left(  w_{\lambda}\right)  \right)  $,
there is a $y_{j}\in X_{\lambda}$ for $1\leq j\leq s$ such that, for every
$f\in E_{\lambda}$, $f\left(  y_{j}\right)  =h_{f}\left(  r_{j}\right)  .$ On
the other hand, we see that there is an $x_{j}\in X$ for $1\leq j\leq s$ such
that $\zeta_{\lambda}\left(  x_{j}\right)  =y_{j}$. Hence, we have
\[
\left \vert \hat{h}_{f}\left(  t\right)  -f\left(  x_{j}\right)  \right \vert
<\varepsilon_{\lambda}/4n_{\lambda}\text{ when }t\in \left(  a_{j}%
,b_{j}\right)  .
\]
We next choose an open set $V_{j}\subseteq X$ with $x_{j}\in V_{j}$ such that,
for every $f\in E_{\lambda}$ and every $x\in V,$ we have%
\[
\left \vert f\left(  x\right)  -f\left(  x_{j}\right)  \right \vert
<\varepsilon_{\lambda}/4n_{\lambda}.
\]
We now use the fact that $X$ is approximately path-connected to find a
continuous function $\gamma_{\lambda}:\left[  0,1\right]  \rightarrow X$ such
that, for $1\leq j\leq s$,%
\[
\gamma_{\lambda}\left(  \left(  a_{j},b_{j}\right)  \right)  \subseteq
V_{j}\text{.}%
\]
We have, for each $f\in E_{\lambda}$, each $1\leq j\leq s$, and each
$t\in \left(  a_{j},b_{j}\right)  $%
\[
\left \vert \hat{h}_{f}\left(  t\right)  -f\circ \gamma_{\lambda}\left(
t\right)  \right \vert <\varepsilon_{\lambda}/2n_{\lambda}%
\]
and for $t\in \left[  0,1\right]  \backslash \cup_{j=1}^{s}\left(  a_{j}%
,b_{j}\right)  ,$%
\[
\left \vert \hat{h}_{f}\left(  t\right)  -f\circ \gamma_{\lambda}\left(
t\right)  \right \vert \leq2.
\]
Hence, for every $f\in E_{\lambda}$%
\begin{multline*}
\left \vert \psi_{\lambda}\left(  \pi \left(  f\right)  \right)  -\psi_{\lambda
}\left(  \left(  f\circ \gamma_{\lambda}\right)  \left(  w_{\lambda}\right)
\right)  \right \vert \leq \int \left \vert \hat{h}_{f}-\left(  f\circ
\gamma_{\lambda}\right)  \left(  w_{\lambda}\right)  \right \vert d\mu
_{\lambda}<\\
\left(  \varepsilon_{\lambda}/2n_{\lambda}\right)  \mu_{\lambda}\left(
\left[  0,1\right]  \right)  +2\mu_{\lambda}\left(  \left[  0,1\right]
\backslash \cup_{j=1}^{s}\left(  a_{j},b_{j}\right)  \right)  <\varepsilon
_{\lambda}.
\end{multline*}
Since $\psi_{\lambda}=\sum_{\psi \in S_{\lambda}}\psi$, the measure
$\mu_{\lambda}$ is a sum of probability measures, one corresponding to each
$\psi \in S_{\lambda}$. We therefore have, for every $f\in E_{\lambda}$ and
every $\psi \in S_{\lambda}$,%
\[
\left \vert \psi \left(  \pi \left(  f\right)  \right)  -\psi \left(  \left(
f\circ \gamma_{\lambda}\right)  \left(  w_{\lambda}\right)  \right)
\right \vert \leq \int \left \vert \hat{h}_{f}-\left(  f\circ \gamma_{\lambda
}\right)  \left(  w_{\lambda}\right)  \right \vert d\mu_{\lambda}%
<\varepsilon_{\lambda}.
\]
We can choose $v_{\lambda}\in \mathcal{A}$ with $0\leq v_{\lambda}\leq1$ such
that $\eta \left(  v_{\lambda}\right)  =w_{\lambda}$. We define a unital $\ast
$-homomorphism $\pi_{\lambda}:C\left(  X\right)  \rightarrow \mathcal{A}$ by
$\pi_{\lambda}\left(  f\right)  =\left(  f\circ \gamma_{\lambda}\right)
\left(  v_{\lambda}\right)  $. Hence, for every $f\in C\left(  X\right)  $,
\[
\left(  \eta \circ \pi_{\lambda}\right)  \left(  f\right)  =\eta \left(  \left(
f\circ \gamma_{\lambda}\right)  \left(  v_{\lambda}\right)  \right)  =\left(
f\circ \gamma_{\lambda}\right)  \left(  w_{\lambda}\right)  .
\]
Hence, for every $f\in E_{\lambda}$ and every $\psi \in S_{\lambda}$ we have%
\[
\left \vert \psi \left(  \left(  \eta \circ \pi_{\lambda}\right)  \left(
f\right)  \right)  -\psi \left(  \pi \left(  f\right)  \right)  \right \vert
<\varepsilon_{\lambda}.
\]
It follows, for every $f\in C\left(  X\right)  $ with $0\leq f\leq1$ and every
state $\psi$ on $\mathcal{B}$, that%
\[
\lim_{\lambda}\psi \left(  \left(  \eta \circ \pi_{\lambda}\right)  \left(
f\right)  \right)  =\psi \left(  \pi \left(  f\right)  \right)  .
\]
Since every $g\in C\left(  X\right)  $ is a linear combination of $f$'s with
$0\leq f\leq1$ and every continuous linear functional on $\mathcal{B}$ is a
linear combination of states, we see, for every $f\in C\left(  X\right)  $,
that $\left(  \eta \circ \pi_{\lambda}\right)  \left(  f\right)  \rightarrow
\pi \left(  f\right)  $ in the weak topology on $\mathcal{B}$.

$\left(  2\right)  \Rightarrow \left(  3\right)  .$ Since every W*-algebra is
an AW*-algebra, it is clear that we can find a net $\left \{  \pi_{\lambda
}\right \}  $ as in $\left(  2\right)  $. Thus, for every $f\in C\left(
X\right)  $, $\left(  \eta \circ \pi_{\lambda}\right)  \left(  f\right)
\rightarrow \pi \left(  f\right)  $ and
\[
\left(  \eta \circ \pi_{\lambda}\right)  \left(  f\right)  ^{\ast}\left(
\eta \circ \pi_{\lambda}\right)  \left(  f\right)  =\left(  \eta \circ
\pi_{\lambda}\right)  \left(  f^{\ast}f\right)  \rightarrow \pi \left(
f\right)  ^{\ast}\pi \left(  f\right)
\]
ultraweakly. Hence we have $\left(  \eta \circ \pi_{\lambda}\right)  \left(
f\right)  \rightarrow \pi \left(  f\right)  $ in the ultra*-strong topology on
$\mathcal{B}$.

$\left(  3\right)  \Rightarrow \left(  4\right)  $. By Lemma \ref{Lebesgue}
there exists a unital $\ast$-homomorphism $\pi: C(X) \to%
{\displaystyle \prod_{n\in \mathbb{N}}^{\alpha}}
\left(  C\left[  0,1\right]  ,\sigma \right)  $ such that
\begin{equation}
\label{3)to4)}\sigma_{\alpha}\circ \pi(f) = \int_{X} f d\nu,
\end{equation}
for each $f\in C(X)$. Here a state $\sigma$ on $C[0, 1]$ is given by
$\sigma(g) = \int_{0}^{1} g dx.$ Let $\eta: \prod C[0, 1] \to%
{\displaystyle \prod_{n\in \mathbb{N}}^{\alpha}}
\left(  C\left[  0,1\right]  ,\sigma \right)  $ be the canonical surjection. By
3) there is a net $\left \{  \pi_{\lambda}\right \}  $ of unital $\ast
$-homomorphisms from $C\left(  X\right)  $ to $\prod C[0, 1]$ such that, for
every $f\in C\left(  X\right)  ,$%
\[
\left(  \eta \circ \pi_{\lambda}\right)  \left(  f\right)  \rightarrow \pi \left(
f\right)
\]

\noindent ultra*-strongly. By Lemma \ref{LimitOfLiftable} there exist unital
$\ast$-homomorphisms $\rho_{n}: C(X) \to C[0, 1]$ such that
\[
\pi(f) = \eta((\rho_{n}(f))_{n\in \mathbb{N}}),
\]
for each $f\in C(X)$. By (\ref{3)to4)})
\[
\int_{X} f d\nu= \lim_{\alpha} \int_{0}^{1} \rho_{n}(f) dx.
\]
The sequence $\int_{0}^{1} \rho_{n}(f) dx$ contains a subnet $\int_{0}^{1}
\rho_{n_{\omega}}(f) dx$ which is an ultranet. This ultranet has to converge
to $\int_{X} f d\nu$. Indeed, if $\lim_{\alpha} t_{n} =t$ and $t_{n_{\omega}}$
is an ultranet, then $\lim_{\omega} t_{n_{\omega}} =t$. (Proof: for any
$\epsilon> 0$ the set $\{n_{\omega}\;|\; |t_{n_{\omega}} - t|< \epsilon \}$ is
infinite, otherwise $\{n_{\omega}\;|\;|t_{n_{\omega}} - t|< \epsilon
\} \notin \alpha$ and we would have $\emptyset= \{n_{\omega}\;|\; |t_{n_{\omega
}} - t|\ge \epsilon \} \bigcap \{n\;| \; |t_{n} - t|< \epsilon \} \in \alpha$.
Hence $t$ is an accumulation point for $\{t_{n_{\omega}}\}$ and since
$\{t_{n_{\omega}}\}$ is an ultranet, $t$ is its limit.) Thus we have
\[
\int_{X} f d\nu= \lim_{\omega} \int_{0}^{1} \rho_{n_{\omega}}(f) dx.
\]
\bigskip

$\left(  4\right)  \Rightarrow \left(  1\right)  $. Assume $\left(  4\right)  $
is true. Suppose $V_{1},V_{2},\ldots,V_{s}$ are nonempty open subsets of $X$.
There is no harm in assuming $\left \{  V_{1},\ldots,V_{s}\right \}  $ is
disjoint. For each $1\leq j\leq s$ we can choose $x_{k}\in V_{k}$ and a
continuous function $h_{j}:X\rightarrow \left[  0,1\right]  $ such that
$h_{j}\left(  x_{j}\right)  =1$ and $h_{j}|_{X\backslash V_{j}}=0$. Let
$\mu=\frac{1}{s}\sum_{j=1}^{s}\delta_{x_{j}}$. Then $\mu$ is a probability
measure with $\int_{X}h_{j}d\mu=\frac{1}{s}$. It follows from $\left(
4\right)  $ that there is a unital $\ast$-homomorphism $\rho:C\left(
X\right)  \rightarrow C\left[  0,1\right]  $ such that%
\[
\int_{0}^{1}\rho \left(  h_{j}\right)  \left(  x\right)  dx\neq0
\]
for $1\leq k\leq s$. However, there must be a continuous map $\gamma:\left[
0,1\right]  \rightarrow X$ such that $\pi \left(  f\right)  =f\circ \gamma$ for
every $f\in C\left(  X\right)  .$ For each $1\leq j\leq s,$ $0\neq \int_{0}%
^{1}\left(  h_{j}\circ \gamma \right)  \left(  x\right)  dx$ implies that there
is a $t_{j}\in \left[  0,1\right]  $ such that $h_{j}\left(  \gamma \left(
t_{j}\right)  \right)  \neq0.$ Thus, by the definition of $h_{j}$, we have
$\gamma \left(  t_{j}\right)  \in V_{j}$ for $1\leq j\leq s.$ Therefore, $X$ is
approximately path-connected.
\end{proof}

\bigskip

\noindent {\bf Remark.}
\textrm{In statement $\left(  2\right)  $ in Theorem
\ref{ApprPathConnectedEquivalentConditions}, if we view $\mathcal{B}\subseteq
B\left(  H\right)  $ as the universal representation (that is the direct sum
of all irreducible representations), then the weak operator topology on
$\mathcal{B}$ is the weak (and the ultraweak) topology on $\mathcal{B}$. Thus,
if $\left(  \eta \circ \pi_{\lambda}\right)  \left(  f\right)  \rightarrow
\pi \left(  f\right)  $ and
\[
\left[  \left(  \eta \circ \pi_{\lambda}\right)  \left(  f\right)  \right]
^{\ast}\left[  \left(  \eta \circ \pi_{\lambda}\right)  \left(  f\right)
\right]  =\left(  \eta \circ \pi_{\lambda}\right)  \left(  f^{\ast}f\right)
\rightarrow \pi \left(  f^{\ast}f\right)  =\pi \left(  f\right)  ^{\ast}%
\pi \left(  f\right)
\]
weakly for every $f\in C\left(  X\right)  $ implies, for each $f\in C\left(
X\right)  $ that%
\[
\left(  \eta \circ \pi_{\lambda}\right)  \left(  f\right)  \rightarrow \pi \left(
f\right)
\]
is the $\ast$-strong operator topology in $B\left(  H\right)  .$}

\bigskip

As a corollary we obtain a characterization of when a separable commutative
$C^{*}$-algebra is $C^{*}$-tracially stable.

\begin{theorem}
\label{C*-tracial} Suppose $\left(  X,d\right)  $\ is a compact metric space.
The following are equivalent:

\begin{enumerate}
\item $C\left(  X\right)  $ is $C^{*}$-tracially stable.

\item $X$ is approximately path-connected

\item For every state $\tau$ on $C\left(  X\right)  $ there is a sequence
$\pi_{n}:C\left(  X\right)  \rightarrow C\left[  0,1\right]  $ such that, for
every $f\in C\left(  X\right)  ,$%
\[
\tau \left(  f\right)  =\lim_{n\rightarrow \infty}\int_{0}^{1}\pi_{n}\left(
f\right)  \left(  x\right)  dx.
\]

\end{enumerate}
\end{theorem}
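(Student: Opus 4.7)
The plan is to close the cycle $(1) \Rightarrow (3) \Rightarrow (2) \Rightarrow (1)$, using Theorem \ref{ApprPathConnectedEquivalentConditions} to bridge between approximate path-connectedness and the ultraproduct language of tracial stability, and Lemma \ref{Lebesgue} to recognize an arbitrary state on $C(X)$ as a trace coming from an ultraproduct of $(C[0,1],\sigma)$.

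For $(1)\Rightarrow(3)$, given a state $\tau$ on $C(X)$ and a free ultrafilter $\alpha$ on $\mathbb{N}$, I would invoke Lemma \ref{Lebesgue} to obtain a unital $*$-homomorphism $\pi:C(X)\to \prod_{n}^{\alpha}(C[0,1],\sigma)$ with $\sigma_\alpha\circ\pi=\tau$. Since $C[0,1]$ is a $C^*$-algebra and $C(X)$ is $C^*$-tracially stable, $\pi$ is approximately liftable, yielding $\pi_n:C(X)\to C[0,1]$ with $\pi(f)=\{\pi_n(f)\}_\alpha$ for every $f\in C(X)$. Then $\tau(f)=\lim_{n\to\alpha}\int_0^1\pi_n(f)(x)\,dx$; since $C(X)$ is separable, a standard diagonal argument applied to a dense sequence in $C(X)$ lets me replace the ultrafilter limit by an ordinary sequential limit, giving (3). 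The implication $(3)\Rightarrow(2)$ is then immediate: by the Riesz representation theorem every regular Borel probability measure on $X$ arises from a state, and a sequence is in particular a net, so (3) implies condition (4) of Theorem \ref{ApprPathConnectedEquivalentConditions}, whence $X$ is approximately path-connected.

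The main work is $(2)\Rightarrow(1)$. Given a unital $*$-homomorphism $\pi:C(X)\to\mathcal{B}:=\prod_{i\in I}^\alpha(\mathcal{A}_i,\rho_i)$, recall that $\mathcal{B}$ is a W*-algebra with a faithful normal tracial state $\rho_\alpha$, and that the canonical quotient $\eta:\prod_i\mathcal{A}_i\to\mathcal{B}$ is a surjective unital $*$-homomorphism. Since $X$ is approximately path-connected, Theorem \ref{ApprPathConnectedEquivalentConditions}(3) produces a net $\pi_\lambda:C(X)\to\prod_i\mathcal{A}_i$ with $\eta\circ\pi_\lambda(f)\to\pi(f)$ ultra*-strongly in $\mathcal{B}$. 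On the norm-bounded set $\pi_\lambda(C(X))_{\le M}$ (uniform bound coming from unital contractivity), the ultra*-strong topology of a finite W*-algebra with faithful normal trace agrees with the $\|\cdot\|_{2,\rho_\alpha}$-topology, so this convergence is $\|\cdot\|_{2,\rho_\alpha}$-convergence.

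To convert this into an approximate lifting, write $\pi_\lambda(f)=(\pi_{\lambda,i}(f))_{i\in I}$ coordinatewise; each $\pi_{\lambda,i}:C(X)\to\mathcal{A}_i$ is a unital $*$-homomorphism. Fix a dense sequence $\{g_k\}\subset C(X)$; for any $\varepsilon>0$ and $N\in\mathbb{N}$ choose $\lambda$ so that $\|\{\pi_{\lambda,i}(g_k)\}_\alpha-\pi(g_k)\|_{2,\rho_\alpha}<\varepsilon/2$ for $1\le k\le N$. Writing $\pi(g_k)=\{g_k(i)\}_\alpha$, this unpacks to $\lim_{i\to\alpha}\|\pi_{\lambda,i}(g_k)-g_k(i)\|_{2,\rho_i}<\varepsilon/2$, so there is a set $E\in\alpha$ on which $\|\pi_{\lambda,i}(g_k)-g_k(i)\|_{2,\rho_i}<\varepsilon$ for all $1\le k\le N$. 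This is exactly hypothesis (2) of Lemma \ref{LimitOfLiftable}, whose conclusion says $\pi$ is approximately liftable. The hard part is precisely this last step: converting the net of liftings supplied by Theorem \ref{ApprPathConnectedEquivalentConditions} into a genuine approximate lifting compatible with the ultrafilter indexing, which is handled cleanly once one observes that the ultra*-strong topology on the tracial ultraproduct restricts to the $\|\cdot\|_2$-topology on bounded sets.
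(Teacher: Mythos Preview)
Your proof is correct and follows essentially the same route as the paper: both use Lemma~\ref{Lebesgue} to realize an arbitrary state as $\sigma_\alpha\circ\pi$ for some $\pi$ into an ultraproduct of $(C[0,1],\sigma)$, both invoke Theorem~\ref{ApprPathConnectedEquivalentConditions} to pass between approximate path-connectedness and the existence of a net of liftings converging in the ultra*-strong (equivalently, on bounded sets, the $\|\cdot\|_2$) topology, and both finish via Lemma~\ref{LimitOfLiftable}. The only cosmetic differences are that the paper organizes the implications as $(2)\Leftrightarrow(3)$, $(3)\Rightarrow(1)$, $(1)\Rightarrow(3)$ rather than your cycle, and that for $(1)\Rightarrow(3)$ the paper extracts an ultranet subnet before passing to a subsequence while you use a direct diagonal argument over a countable dense set---both are standard and equally valid.
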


\begin{proof}
2) $\Leftrightarrow$ 3) by the equivalence of statements 1) and 4) in Theorem
\ref{ApprPathConnectedEquivalentConditions} and separability of $C(X)$.

3) $\Rightarrow$ 1): Let $\phi: C(X) \to \prod_{i\in I}^{\alpha} \left(
\mathcal{A}_{i},\rho_{i}\right)  $ be a unital $\ast$-homomorphism. By
\cite{Hadwin-Li} $\prod_{i\in I}^{\alpha} \left(  \mathcal{A}_{i},\rho
_{i}\right)  $ is a von Neumann algebra and by the equivalence 3)
$\Leftrightarrow$ 4) in Theorem \ref{ApprPathConnectedEquivalentConditions}
$\phi$ is a $\ast$-strong pointwise limit of liftable $\ast$-homomorphisms
from $C(X) \to \prod_{i\in I}^{\alpha} \left(  \mathcal{A}_{i},\rho_{i}\right)
$. By Lemma \ref{LimitOfLiftable} $\phi$ is liftable.

1) $\Rightarrow$ 3): By Lemma \ref{Lebesgue} there is a unital $\ast
$-homomorphism $\pi: C\left(  X\right)  \rightarrow%
{\displaystyle \prod_{n\in \mathbb{N}}^{\alpha}}
\left(  C\left[  0,1\right]  ,\sigma \right)  $ such that $\sigma_{\alpha}%
\circ \pi= \tau.$ By 1) we can lift it and obtain a sequence $\pi_{n}:C\left(
X\right)  \rightarrow C\left[  0,1\right]  $ such that, for every $f\in
C\left(  X\right)  ,$%
\[
\tau \left(  f\right)  =\lim_{\alpha}\int_{0}^{1}\pi_{n}\left(  f\right)
\left(  x\right)  dx.
\]

\noindent Taking a subnet, which is an ultranet we obtain (by the same
arguments as in the proof of the implication $3) \Rightarrow4)$ in Theorem
\ref{ApprPathConnectedEquivalentConditions}) that for any $f\in C(X)$
\[
\tau \left(  f\right)  =\lim_{\omega}\int_{0}^{1}\pi_{n_{\omega}}\left(
f\right)  \left(  x\right)  dx.
\]

\noindent Since $C(X)$ is separable, we can pass to a subsequence.
\end{proof}

\begin{corollary}
Suppose $\mathcal{A}$ is a unital commutative $C^{*}$-tracially stable
C*-algebra and $\mathcal{A}_{0}$ is a unital C*-subalgebra of $\mathcal{A}$.
Then $\mathcal{A}_{0}$ is $C^{*}$-tracially stable.
\end{corollary}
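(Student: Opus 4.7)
The plan is to reduce everything to the topological characterization in Theorem \ref{C*-tracial} and exploit the elementary fact (listed right after the definition of approximately path-connected) that a continuous image of an approximately path-connected space is approximately path-connected.

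First, note that $C^{*}$-tracial stability is defined only for separable unital $C^{*}$-algebras, so we implicitly have $\mathcal{A}$ separable, and then $\mathcal{A}_{0}$ is also separable. By Gelfand duality I would write $\mathcal{A}=C(X)$ for a compact metric space $X$, and observe that the unital $C^{*}$-subalgebra $\mathcal{A}_{0}\subseteq C(X)$ is of the form $C(Y)$ where $Y$ is the quotient of $X$ by the equivalence relation $x\sim x'\iff f(x)=f(x')$ for all $f\in \mathcal{A}_{0}$; the quotient map $\zeta:X\to Y$ is a continuous surjection, and $Y$ is itself a compact metric space (its topology is metrizable because $\mathcal{A}_{0}$ is separable).

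Next, by the implication $(1)\Rightarrow(2)$ of Theorem \ref{C*-tracial}, the hypothesis that $C(X)$ is $C^{*}$-tracially stable gives that $X$ is approximately path-connected. By the elementary fact that a continuous image of an approximately path-connected space is approximately path-connected (applied to the surjection $\zeta:X\to Y$), it follows that $Y$ is approximately path-connected. Finally, applying the reverse implication $(2)\Rightarrow(1)$ of Theorem \ref{C*-tracial} to $Y$ yields that $\mathcal{A}_{0}=C(Y)$ is $C^{*}$-tracially stable.

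There is no real obstacle here beyond the bookkeeping: the only point worth checking carefully is that the continuous-image-preserves-approximate-path-connectedness assertion is immediate from the definition (given open sets $V_{1},\dots,V_{s}\subseteq Y$, pull them back to nonempty open $\zeta^{-1}(V_{j})\subseteq X$, find a path $\gamma$ in $X$ meeting each, and take $\zeta\circ\gamma$), and that metrizability of $Y$ survives the quotient, which it does because $\mathcal{A}_{0}$ is a separable unital $C^{*}$-subalgebra of $C(X)$ so its maximal ideal space is second countable compact Hausdorff, hence metrizable.
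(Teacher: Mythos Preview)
Your proof is correct and follows essentially the same route as the paper: write $\mathcal{A}=C(X)$ and $\mathcal{A}_{0}=C(Y)$ with a continuous surjection $X\to Y$, apply Theorem~\ref{C*-tracial} to pass from $C^{*}$-tracial stability of $C(X)$ to approximate path-connectedness of $X$, push this forward to $Y$, and apply Theorem~\ref{C*-tracial} again. Your version is simply more detailed about the bookkeeping (metrizability of $Y$, the one-line verification of the continuous-image fact), but there is no substantive difference in strategy.
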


\begin{proof}
We can assume $\mathcal{A}=C\left(  X\right)  $ with $X$ an approximately
path-connected compact metric space. Also we can write $\mathcal{A}%
_{0}=C\left(  Y\right)  $, and, since $\mathcal{A}_{0}$ embeds into
$\mathcal{A}$, there is a continuous surjective map $\varphi:X\rightarrow Y.$
Thus $Y$ is approximately path-connected, which implies $\mathcal{A}_{0}$ is
C*-tracially stable.
\end{proof}

\bigskip

At this point there is little else we can say about C*-tracially stable
algebras, except that they do not have projections when there is a faithful
embeddable tracial state.

\begin{theorem}
Suppose $\mathcal{A}$ is a separable C*-tracially stable unital C*-algebra.
Then $\mathcal{A}/\mathcal{J}_{et}$ has no nontrivial projections.
\end{theorem}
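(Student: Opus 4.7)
The plan is by contradiction. Assume $\mathcal{B} := \mathcal{A}/\mathcal{J}_{et}$ has a nontrivial projection $P$. Since $\mathcal{A}$ is separable, $\mathcal{B}$ admits a faithful embeddable tracial state $\tau$, so $t := \tau(P) \in (0,1)$. First I would lift $P$ to a positive contraction $P_{0}\in \mathcal{A}$ by functional calculus on any self-adjoint lift; then $q(P_{0})=P$ and $P_{0}-P_{0}^{2}\in \mathcal{J}_{et}$, giving $\tau'((P_{0}-P_{0}^{2})^{2})=0$ for every embeddable tracial state $\tau'$ on $\mathcal{A}$. A short invertibility argument forces $\{0,1\}\subseteq \sigma(P_{0})$: otherwise $P_{0}$ or $1-P_{0}$ would be invertible, forcing $P$ or $1-P$ to be invertible in $\mathcal{B}$, contradicting nontriviality.

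Consider the commutative unital subalgebra $\mathcal{C}:=C^{*}(1,P_{0})\cong C(\sigma(P_{0}))$ of $\mathcal{A}$. The restriction of $\tau\circ q$ to $\mathcal{C}$ is a probability measure $\mu$ on $\sigma(P_{0})$ with $\int(x^{2}-x)\,d\mu=0$; since $x^{2}-x\leq 0$ on $[0,1]$ with equality only at $\{0,1\}$, we get $\mu = t\,\delta_{1}+(1-t)\,\delta_{0}$, supported on the disconnected two-point set $\{0,1\}$. Using the embeddability of $\tau\circ q$ one obtains a trace-preserving unital $\ast$-homomorphism $\pi:\mathcal{A}\to\prod_{n\in\mathbb{N}}^{\alpha}(M_{n},\tau_{n})$ in which $P_{0}$ maps to a genuine projection of trace $t$ (since $\tau\circ q$ annihilates $(P_0-P_0^2)^2$ and the limit trace is faithful).

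The crux is to apply C$^{*}$-tracial stability of $\mathcal{A}$ through a projectionless target: composing $\pi$ with trace-preserving inclusions $M_{n}\hookrightarrow\mathcal{Z}$ of matrix algebras into the Jiang--Su algebra $\mathcal{Z}$ yields $\pi':\mathcal{A}\to\prod_{n\in\mathbb{N}}^{\alpha}(\mathcal{Z},\tau_{\mathcal{Z}})$, which lifts by C$^{*}$-tracial stability to unital $\ast$-homomorphisms $\pi_{n}:\mathcal{A}\to\mathcal{Z}$. The element $\pi_{n}(P_{0})$ is then a positive contraction in $\mathcal{Z}$ with $\|\pi_{n}(P_{0})-\pi_{n}(P_{0})^{2}\|_{2,\tau_{\mathcal{Z}}}\to 0$ and $\tau_{\mathcal{Z}}(\pi_{n}(P_{0}))\to t$. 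Since $\mathcal{Z}$ is projectionless, $\sigma(\pi_{n}(P_{0}))$ is a connected subset of $[0,1]$, and the commutative subalgebra $C^{*}(1,\pi_{n}(P_{0}))$ is C$^{*}$-tracially stable; Theorem \ref{C*-tracial} combined with a careful spectral bookkeeping on the mass distribution of $\pi_{n}(P_{0})$ within its (connected) spectrum, together with the trace constraints from $\mu$, should produce the desired contradiction with $t\in(0,1)$.

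The hard part will be the final spectral bookkeeping: approximate projections in 2-norm in $\mathcal{Z}$ can a priori have any trace in $[0,1]$, so the contradiction cannot come from ``$\mathcal{Z}$ has no projection of trace $t$'' alone. The argument must leverage the fact that $\pi_{n}$ is a $\ast$-homomorphism from all of $\mathcal{A}$ (not merely from $\mathcal{C}$), so the noncommutative relations in $\mathcal{A}$ outside $\mathcal{C}$ also constrain $\pi_{n}(P_{0})$; together with the projectionless structure of $\mathcal{Z}$ and the commutative characterization of Theorem \ref{C*-tracial} applied to $C^{*}(1,\pi_{n}(P_{0}))$, this should collapse the limit of $\tau_{\mathcal{Z}}(\pi_{n}(P_{0}))$ to $\{0,1\}$, contradicting $t\in(0,1)$. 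An alternative avenue would be to show directly that C$^{*}$-tracial stability of $\mathcal{A}$ descends to the quotient $\mathcal{A}/\mathcal{J}_{et}$ and then deduce projectionlessness from a noncommutative analogue of the approximately-path-connected criterion.
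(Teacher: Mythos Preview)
Your overall strategy---route the embedding through a projectionless target and lift by C*-tracial stability---is exactly right, but the execution has two concrete gaps.

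First, there are no unital inclusions $M_n \hookrightarrow \mathcal{Z}$ for $n\ge 2$: the Jiang--Su algebra is projectionless, so it cannot contain the matrix units of $M_n$. What is true is that $\prod^\alpha(\mathcal{Z},\tau_{\mathcal{Z}})=\prod^\alpha(\mathcal{R},\tau_{\mathcal{R}})$ (since $\pi_{\tau_{\mathcal{Z}}}(\mathcal{Z})''=\mathcal{R}$ by nuclearity, and Kaplansky density gives equality of the tracial ultraproducts), and $M_n$ embeds unitally into $\mathcal{R}$. So your map $\pi'$ exists, but not by the mechanism you describe.

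Second---and this is the real point---the ``hard part'' you isolate simply evaporates once you observe that each lift $\pi_n:\mathcal{A}\to\mathcal{Z}$ automatically annihilates $\mathcal{J}_{et}$. Indeed, $\tau_{\mathcal{Z}}\circ\pi_n$ is a tracial state on $\mathcal{A}$, and since $\mathcal{Z}$ is nuclear with unique trace it is an embeddable trace; hence it vanishes on $\mathcal{J}_{et}$. Because $\mathcal{J}_{et}$ is an ideal and $\tau_{\mathcal{Z}}$ is faithful, $\pi_n(a^*a)=0$ for every $a\in\mathcal{J}_{et}$, so $\pi_n(\mathcal{J}_{et})=0$. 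Thus $\pi_n$ factors as $\bar\pi_n\circ q$, and $\pi_n(P_0)=\bar\pi_n(P)$ is a \emph{genuine} projection in $\mathcal{Z}$, hence equal to $0$ or $1$. No ``noncommutative relations outside $\mathcal{C}$'' are available in general, and none are needed: your own ``alternative avenue'' (descent to $\mathcal{A}/\mathcal{J}_{et}$) was the correct one, and it comes for free from the choice of target.

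The paper's proof follows precisely this template with $C_r^*(\mathbb{F}_2)$ in place of $\mathcal{Z}$: one has $\prod^\alpha(M_n,\tau_n)\subseteq\prod^\alpha(\mathcal{L}_{\mathbb{F}_2},\tau)=\prod^\alpha(C_r^*(\mathbb{F}_2),\tau)$, one lifts by C*-tracial stability to $\pi_n:\mathcal{A}\to C_r^*(\mathbb{F}_2)$, and projectionlessness of $C_r^*(\mathbb{F}_2)$ finishes. The paper phrases the factoring step as ``without loss of generality $\mathcal{J}_{et}=\{0\}$,'' which is justified by exactly the faithful-embeddable-trace argument above (with $\tau$ on $C_r^*(\mathbb{F}_2)$ in the role of $\tau_{\mathcal{Z}}$).
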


\begin{proof}
Without loss of generality we can assume $\mathcal{J}_{et}=\left \{  0\right \}
$. Then $\mathcal{A}$ has a faithful embeddable tracial state $\sigma$. Then
there is a tracial embedding $\pi$ of $\left(  \mathcal{A},\sigma \right)  $
into $%
{\displaystyle \prod_{n\in \mathbb{N}}^{\alpha}}
\left(  C_{r}^{\ast}\left(  \mathbb{F}_{2}\right)  ,\tau \right)  $ for some
non-trivial ultrafilter $\alpha$ on $\mathbb{N}$. Indeed, $C_{r}^{\ast}\left(
\mathbb{F}_{2}\right)  $ has a unique trace $\tau$ and it is a subalgebra of
the factor von Neumann algebra $\mathcal{L}_{{\mathbb{F}}_{2}}$ so that
$\mathcal{L}_{\mathbb{F}_{2}}=W^{\ast}(C_{r}^{\ast}\left(  \mathbb{F}%
_{2}\right)  )$. It follows from the Kaplansky density theorem that the $||\;
\Vert_{2}$-closure of the unit ball of $C^{\ast}(\mathbb{F}_{2})$ is the unit
ball of $\mathcal{L}_{\mathbb{F}_{2}}$ which implies that
\[
\prod^{\alpha}(C^{\ast}(\mathbb{F}_{2}),\tau)=\prod^{\alpha}(\mathcal{L}%
_{\mathbb{F}_{2}},\tau).
\]
Since $\mathcal{L}_{\mathbb{F}_{2}}$ contains $M_{n}(\mathbb{C})$ for each
$n\in \mathbb{N}$, $\prod^{\alpha}(M_{n}(\mathbb{C}),\tau_{n})$ embeds into
$\prod^{\alpha}(C^{\ast}(\mathbb{F}_{2}),\tau)=\prod^{\alpha}(\mathcal{L}%
_{\mathbb{F}_{2}},\tau).$

Now since $\mathcal{A}$ is tracially stable, there is a sequence $\left \{
\pi_{n}\right \}  $ of unital $\ast$-homomorphisms from $\mathcal{A}$ into
$C_{r}^{\ast}\left(  \mathbb{F}_{2}\right)  $ such that, for every
$a\in \mathcal{A}$ ,%
\[
\sigma \left(  a\right)  =\lim_{n\rightarrow \alpha}\tau \left(  \pi_{n}\left(
a\right)  \right)  .
\]
Suppose $P$ is a projection in $\mathcal{A}$. Since $C_{r}^{\ast}\left(
\mathbb{F}_{2}\right)  $ contains no non-trivial projections, for each
$n\in \mathbb{N}$ $\pi_{n}\left(  P\right)  =0$ or $\pi_{n}\left(  P\right)
=1$. Since $\alpha$ is an ultrafilter, eventually $\pi_{n}\left(  P\right)
=0$ along $\alpha$ or eventually $\pi_{n}\left(  P\right)  =1$ along $\alpha$.
Thus $\pi \left(  P\right)  =\left \{  \pi_{n}\left(  P\right)  \right \}
_{\alpha}$ is either $0$ or $1.$ Hence $P$ is either $0$ or $1$.
\end{proof}

\noindent {\bf Remark.} As was pointed out in \cite{Hadwin-Li}, the tracial ultraproducts remain unchanged when you replace the 2-norm by a p-norm ($1\le p <\infty$).
Therefore all results in this paper remain valid if 2-norms are replaced by p-norms.

\end{document}